\documentclass[journal]{IEEEtran}

\usepackage{graphicx}
\usepackage{amsmath}
\usepackage{amssymb}
\usepackage{amsthm}
\usepackage{mathtools}
\usepackage{bm}
\usepackage[dvipsnames]{xcolor}
\usepackage{booktabs}
\usepackage{cancel}
\usepackage{soul}
\usepackage{cite}

\usepackage{tikz}

\usepackage{epstopdf}

\definecolor{green}{RGB}{0,128,0}
\definecolor{blue}{RGB}{0,70,255}

\usepackage[hidelinks,bookmarks=false]{hyperref}

\theoremstyle{plain}
\newtheorem{theorem}{Theorem}[section]

\newtheorem{lemma}[theorem]{Lemma}
\newtheorem{corollary}[theorem]{Corollary}
\theoremstyle{definition}

\theoremstyle{remark}

\DeclareMathAlphabet\mathbfcal{OMS}{cmsy}{b}{n}

\newcommand{\E}{\mathbb{E}}

\newcommand{\R}{\mathbb{R}}

\newcommand{\FF}{\mathrm{F}}
\newcommand{\FFup}{\mathrm{F}^\uparrow}

\newcommand{\Ot}{\mathcal{T}}

\newcommand{\Erf}{\operatorname{Erf}}
\newcommand{\Erfc}{\operatorname{Erfc}}
\newcommand{\Iup}{I^\uparrow}
\newcommand{\Nu}{N_u(T)}
\newcommand{\Nup}{N_u^{\uparrow}(T)}
\newcommand{\Ndown}{N_u^{\downarrow}(T)}
\newcommand{\btheta}{\boldsymbol{\theta}}

\newcommand{\dd}{\mathrm{d}}

\newcommand{\Var}{\text{Var}}
\newcommand{\LHS}{\text{LHS}}
\newcommand{\RHS}{\text{RHS}}
\newcommand{\temp}{\vartheta}

\newcommand{\rp}{r^\prime}
\newcommand{\rpp}{r^{\prime\prime}}

\begin{document}

\title{Exact Variance and Fano Factor for Arbitrary Level Crossings in Stationary Gaussian Processes}

\author{Shivang~Rawat\IEEEauthorrefmark{1}\IEEEauthorrefmark{2},
        Flaviano~Morone\IEEEauthorrefmark{2}\IEEEauthorrefmark{3},
        David~J.~Heeger\IEEEauthorrefmark{3}\IEEEauthorrefmark{4},
        and~Stefano~Martiniani\IEEEauthorrefmark{1}\IEEEauthorrefmark{2}\IEEEauthorrefmark{3}\IEEEauthorrefmark{5}\IEEEcompsocitemizethanks{\IEEEcompsocthanksitem\IEEEauthorrefmark{1}Courant Institute of Mathematical Sciences, New York University.
\IEEEcompsocthanksitem\IEEEauthorrefmark{2}Center for Soft Matter Research, Department of Physics, New York University.
\IEEEcompsocthanksitem\IEEEauthorrefmark{3}Center for Neural Science, New York University.
\IEEEcompsocthanksitem\IEEEauthorrefmark{4}Department of Psychology, New York University.
\IEEEcompsocthanksitem\IEEEauthorrefmark{5}Simons Center for Computational Physical Chemistry, Department of Chemistry, New York University.}\thanks{Corresponding author: Shivang Rawat (e-mail: shivang1797@gmail.com).}}

\markboth{}{Rawat \MakeLowercase{\textit{et al.}}: Exact Variance and Fano Factor for Level Crossings}

\maketitle

\begin{abstract}
Understanding the statistics of level crossings 
in stochastic processes is crucial across many 
scientific disciplines. The traditional Kac-Rice formula gives the mean rate of level crossings and has found broad use. However, that mean rate captures only a coarse summary of the crossing process. It depends entirely on local properties of the stochastic process at a given instant and is therefore blind to the correlation structure of 
the process over time. To understand whether crossing events, such as neuronal spikes, tend to cluster in time, spread apart, or exhibit more complex temporal organization, one must go beyond the mean rate and study higher-order crossing statistics. 
Here we go beyond the mean by deriving the exact 
analytical formulae for the variance and Fano factor 
of arbitrary level crossings in smooth stationary 
Gaussian processes. 
Our exact solution reveals how the full temporal 
correlation structure dictates whether crossings 
cluster or become regular. In systems with 
oscillatory correlations, such as a stochastic 
damped harmonic oscillator, a recent crossing 
suppresses an immediate subsequent one, producing 
sub-Poissonian statistics. However, as damping 
increases and oscillations disappear, a large 
and slow excursion above the threshold can produce 
multiple closely spaced crossings, yielding 
super-Poissonian statistics. 
In purely relaxational, non-oscillatory systems, 
such as a mean-reverting process driven by 
Ornstein-Uhlenbeck noise, the competition between 
the timescales of the driving noise and system 
relaxation produces a richer landscape, including 
reentrant transitions between sub- and super-Poissonian 
statistics as the threshold level is varied. 
Taken together, the exact variance and Fano factor 
derived here complement the Kac-Rice mean rate, enabling 
more robust parameter estimation and model selection 
across any setting where Gaussian processes are used. 

\end{abstract}

\begin{IEEEkeywords}
Level crossings, Gaussian processes, Fano factor, variance, Kac-Rice formula, upcrossing statistics, Owen's $T$ function.
\end{IEEEkeywords}

The study of level crossings in stochastic 
processes is of fundamental importance across 
a range of scientific and engineering disciplines. 
In neuroscience, the rate and variability of 
threshold crossings by membrane potential fluctuations 
determine neuronal firing patterns, with the Fano 
factor serving as a standard diagnostic for 
distinguishing regular from bursty spiking \cite{verechtchaguina2006first, tchumatchenko2010correlations, badel2011firing, gowers2023upcrossing}. 
In structural and reliability engineering, 
the number of times a load exceeds a critical 
threshold directly governs fatigue life estimates~\cite{lutes2004random}, where knowledge 
of the mean crossing rate alone can underestimate 
failure risk if crossings are clustered. In finance, 
the variability and clustering of extreme-event 
crossings carry implications for risk management 
and policy \cite{jafari2006level, shayeganfar2012level}. 

Across all these domains, the mean crossing 
rate alone, as quantified by the celebrated 
Kac-Rice formula~\cite{kac1943, rice1944mathematical}, 
is insufficient: it is the higher-order statistics, 
and in particular the variance, that capture 
the degree of clustering or regularity of 
crossing events. 
Yet computing the variance of crossing counts 
has remained an open problem for crossings 
at arbitrary threshold levels, due to the 
asymmetric Gaussian integrals that arise when 
the threshold is nonzero. 
In this work we close this gap by deriving exact 
analytical expressions for the variance and Fano 
factor of level crossings in smooth stationary 
Gaussian processes. 

The Fano factor is the ratio of the variance 
to the mean of the crossing count and serves 
as a natural diagnostic to classify the 
statistics of level crossings. A Fano factor 
of 1 corresponds to Poisson-distributed crossings, 
while values below or above unity signal sub-Poissonian regularity or super-Poissonian clustering, respectively. 
Our results reveal that this quantity is entirely 
governed by the temporal correlation structure of 
the process. Specifically, oscillatory correlations 
typically enforce regularity by suppressing immediate 
subsequent crossings, while overdamped dynamics allow 
long excursions above the threshold to generate bursts 
of closely spaced crossings. In purely relaxational 
systems, the competition between the timescales of 
the driving noise and the system's relaxation produces 
a richer landscape, including reentrant transitions 
between sub-Poissonian and super-Poissonian regimes as the 
threshold is varied.

As illustrated in Fig.~\ref{fig:description_upcrossing}, an \textit{up}-crossing of a given level $u$ occurs when the process $X_t$ intersects the threshold $u$ with a positive slope (i.e., $X_t = u$ and $\dot{X}_t > 0$). We denote the number of such events within a time interval $[0, T]$ as $\Nup$. Similarly, \textit{down}-crossings, where $X_t=u$ and $\dot{X}_t < 0$, are denoted by $\Ndown$. The total number of crossings, encompassing both directions, is $\Nu = \Nup + \Ndown$. Our primary objective is to derive exact analytical expressions for the variance of these counting processes, specifically $\Var[\Nup]$ and $\Var[\Nu]$, and for the Fano factor, a dimensionless metric that quantifies the regularity of crossing events relative to a Poisson process.

\begin{figure*}[!t]
    \centering
    \includegraphics[width=0.9\textwidth]{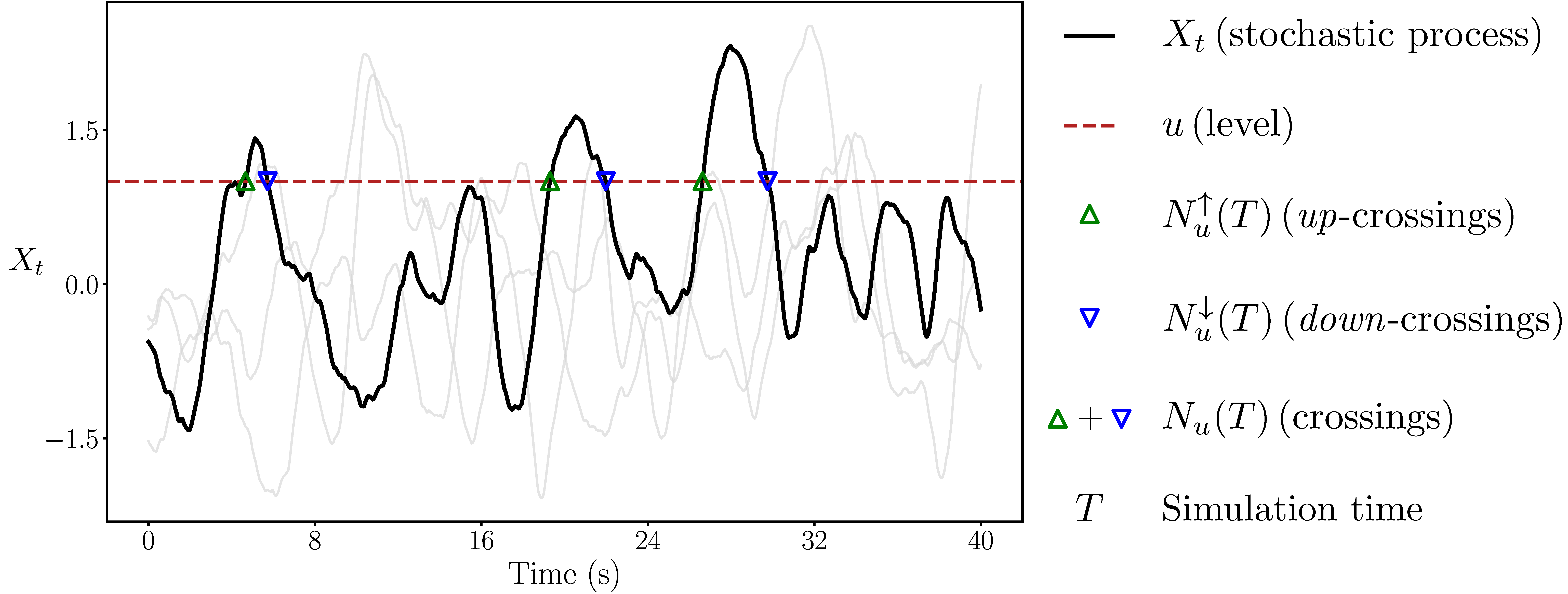}
    \caption[Illustration of level crossing events for a stationary Gaussian process.]{\textbf{Illustration of level crossing events for a stationary Gaussian process.} We plot the sample paths of a stationary Gaussian stochastic process $X_t$ generated by the stochastic damped harmonic oscillator model, which is detailed in Section~\ref{sec:sdho}. The model parameters are set to $\omega_0=1.0$, $\temp=1.0$, and $\zeta=0.5$, yielding a process variance $r(0)=\vartheta/\omega_0^2=1.0$. One prominent realization of $X_t$ is shown as a black curve, with other realizations depicted in gray, all over the time interval $[0, T]$. The horizontal dashed red line indicates the constant threshold level $u=1.0$. Upcrossings, defined as instances where $X_t=u$ and its time derivative $\dot{X}_t > 0$, are denoted by $\Nup$ and marked with green upward triangles. Downcrossings, where $X_t=u$ and $\dot{X}_t < 0$, are denoted by $\Ndown$ and marked with blue downward triangles. The total number of crossings, $\Nu$, is the sum of $\Nup$ and $\Ndown$.}
    \label{fig:description_upcrossing}
\end{figure*}

The study of higher-order moments of level 
crossings has a rich history, comprehensively 
surveyed in~\cite{Kratz2006_review}, with general 
expressions for these moments typically formulated 
as multiple integrals involving the process's derivatives. 
For the specific case of mean-level upcrossings ($u=0$), 
Steinberg {\it et al.}~\cite{steinberg1955short} 
derived an exact analytical solution expressed as 
a single integral over time, a result dating back 
to the 1950s. However, extending such a compact 
single-integral form to an arbitrary level $u$ has 
remained a longstanding open problem, owing to the 
asymmetric Gaussian integrals that arise when $u \neq 0$. 
Aza{\"\i}s {\it et al.}~\cite{azais1999bounds} and 
Cierco-Ayrolles {\it et al.}~\cite{cierco2003computing} 
obtained expressions for the variance of upcrossings 
at arbitrary levels as a byproduct of their work on 
the statistics of extreme values of Gaussian processes, 
but these were left in the form of multiple integrals 
rather than reduced to a compact single-integral 
expression. The present work provides this missing 
solution by deriving a closed-form, single-integral 
expression for the variance of arbitrary level 
upcrossings and total crossings, expressed in terms 
of the error function and Owen's $T$ function.

The paper is structured as follows: Section~\ref{section:description_of_problem} provides a precise mathematical formulation of the level-crossing problem for stationary Gaussian processes and states key results for the mean and variance. In Section~\ref{section:main_result_upcrossing}, we present our central contribution: exact analytical formulae for the variance and Fano factor of arbitrary level upcrossings and total crossings. Simplified expressions for mean-level crossings are detailed in Appendix~\ref{section:special_formulae_mean}. Finally, Section~\ref{section:applications_upcrossing} demonstrates the utility of our findings by applying them to three stochastic models, the damped harmonic oscillator, a mean-reverting process with Ornstein-Uhlenbeck noise, and a process with a rational quadratic correlation function, revealing a rich landscape of sub-Poissonian and super-Poissonian crossing statistics controlled by the correlation structure.

\section{Description of the Problem} \label{section:description_of_problem}

Let $\{X_t; \, t\in [0,T]\}$ denote a smooth (i.e., differentiable), stationary, zero-mean, Gaussian process characterized by the autocorrelation function $r(t)$, defined as
\begin{equation}
    r(t) \coloneqq R_X(s, s+t) = \E\left[X_s \, X_{s+t} \right].
\end{equation}
The function $r(t)$ is real-valued for all $t \, \in (-\infty,\infty)$ and satisfies the following properties:
\begin{itemize}
    \item $|r(t)| < r(0)$ for all $\, t \notin\{0\}$,
    \item $0 < r(0) < \infty$,
    \item $r(t) \rightarrow 0$ as $t \rightarrow \infty$.    
\end{itemize}
Due to stationarity, $r(t)$ depends only on the time difference $t$ and satisfies the symmetry property $r(-t) = r(t)$. Additionally, we assume that $X_t$ is smooth, i.e., $r(t)$ is twice-differentiable ($r^{\prime\prime}(t)$ exists). 

In this work, we find explicit expressions for the variance of the counting processes $\Nup$, $\Ndown$, and $\Nu$, which represent the number of upcrossings, downcrossings, and the total number of crossings, respectively, of a specified level $u$ by the process $X_t$ within the interval $t \in (0, T]$. All results derived for $\Nup$ also hold for $\Ndown$. Therefore, to avoid redundancy, we will primarily focus on $\Nup$, with the corresponding results for $\Ndown$ implied. The total number of crossings is simply given by the sum $\Nu = \Nup + \Ndown$. Additionally, we explore the Fano factors for these counting processes, a crucial metric that quantifies the dispersion, defined as the ratio between the variance and the mean of the counting process in the long-time limit. Formally, we have,
\begin{equation}
\begin{split}
    \FFup = \lim_{T \to \infty}\frac{\Var[\Nup]}{\E[\Nup]} \; \text{and} \;\FF = \lim_{T \to \infty}\frac{\Var[\Nu]}{\E[\Nu]}. 
\end{split}
\end{equation}

The counting processes $\Nup$ and $\Nu$ can be expressed as integrals of the underlying generator stochastic process $X_t$:
\begin{equation}
\begin{split}
    \Nup &= \int_0^T \delta\left(X_{t_1} - u\right) \, \mathrm{H}\left(\dot{X}_{t_1}\right) \dot{X}_{t_1} \, \dd t_1, \\
    \Nu &= \int_0^T \delta\left(X_{t_1} - u\right) \left|\dot{X}_{t_1}\right| \, \dd t_1.
\end{split} \label{eq:defn_Nu_delta_main}
\end{equation} 
Here, $\dot{X}_{t_1}$ denotes the derivative of the stochastic process, $\delta(x)$ represents the Dirac delta function, and $\mathrm{H}(x)$ is the Heaviside step function. The term $\delta\left(X_{t_1} - u\right)$ ensures that only the crossings of level $u$ are counted. The presence of $\mathrm{H}(\dot{X}_{t_1})$ restricts the count to upward crossings, while $|\dot{X}_{t_1}|$ accounts for both upward and downward crossings.

Since the autocorrelation function $r(t)$ is assumed to be twice differentiable, the expected values of $\Nup$ and $\Nu$ remain finite over a bounded interval $T$ \cite{ito1963expected, ylvisaker1965expected}. Furthermore, Geman \cite{geman1972variance} established that a necessary and sufficient condition for the variance of zero-level crossings, $\Var\left[N_0(T)\right]$, to be finite in a bounded interval $T$ is given by:
\begin{equation}
    \int_0^\epsilon \frac{\rpp(t) - \rpp(0)}{t}\, \dd t \, < \infty; \qquad \text{for some } \epsilon > 0. \label{eq:geman_condition}
\end{equation}
Later, Kratz \cite{kratz2006} demonstrated that this condition remains both necessary and sufficient for ensuring a finite variance of crossings at an arbitrary level, i.e., for $\Var\left[\Nu\right]$ to be finite over a bounded interval $T$. Consequently, this also guarantees a finite $\Var[\Nup]$. Throughout this work, we assume that this condition is satisfied by the correlation function.

\subsection{Mean of the Counting Processes}

The mean number of upcrossings follows from the Kac-Rice formula \cite{kac1943, rice1944mathematical}. We define the random vector $\mathbf{P} = [X_{0}, \, \dot{X}_{0}]^\top$, whose joint probability density function (p.d.f.) $f_{\mathbf{P}}(x,\,y)$ is a zero-mean bivariate Gaussian with covariance matrix
\begin{equation}
\renewcommand{\arraystretch}{1.5}
    \boldsymbol{\Sigma}_{\mathbf{P}} = \begin{bmatrix}
                r(0) & 0 \\
                0 & -\rpp(0)
                \end{bmatrix}, \label{eq:correlation_mat_P_main}
\end{equation}
where the off-diagonal entries vanish because $r^\prime(0)=0$ for a stationary process (see Appendix~\ref{section:appendix_upcrossing_mean} for details). The Kac-Rice formula then gives \cite{rice1944mathematical}
\begin{multline}
    \E\left[\Nup\right] = T \int_{0}^\infty y\, f_{\mathbf{P}}(u,y) \, \dd y \\
    = T \, \frac{1}{2\pi}\sqrt{\frac{-r^{\prime\prime}(0)}{r(0)}} \, e^{-\frac{u^2}{2r(0)}}. \label{eq:rice_formula_specific_main}
\end{multline}
Since $\E\left[\Nu\right]$ accounts for both upward and downward crossings, it is simply $2\, \E[\Nup]$:
\begin{equation}
    \E\left[\Nu\right] = T\, \frac{1}{\pi}\sqrt{\frac{-r^{\prime\prime}(0)}{r(0)}} \, e^{-\frac{u^2}{2r(0)}}.
\end{equation}
Notably, the mean crossing rate depends only on the local properties of the autocorrelation function, its value and curvature at the origin, and is independent of the detailed shape of $r(t)$ at finite lag times.

\subsection{Variance of the Counting Processes} \label{section:variance_derivation}

In contrast to the mean, which depends only on the local properties $r(0)$ and $r^{\prime\prime}(0)$, the variance encodes information about the full correlation structure of the process. To compute the variance, we consider the joint p.d.f. of the random vector $\mathbf{Q} = [X_{t_1}, \, X_{t_2}, \, \dot{X}_{t_1}, \, \dot{X}_{t_2}]^\top$ at two distinct times, which is a zero-mean multivariate Gaussian with the $4\times4$ covariance matrix:
\begin{equation}
\renewcommand{\arraystretch}{1.2}
    \boldsymbol{\Sigma}_{\mathbf{Q}}(t) =
                \begin{bmatrix}
                r(0) & r(t) & 0 & \rp(t) \\
                r(t) & r(0) & -\rp(t) & 0 \\
                0 & -\rp(t) & -\rpp(0) & -\rpp(t) \\
                \rp(t) & 0 & -\rpp(t) & -\rpp(0) \\
                \end{bmatrix}, \label{eq:correlation_mat_Q_main}
\end{equation}
where $t=t_2-t_1$. Using the second factorial moment and a change of variables (see Appendix~\ref{section:variance_derivation1} for a detailed derivation), the finite-time variance of $\Nup$ can be expressed as \cite{cramer_leadbetter_book, piterbarg1996asymptotic}:
\begin{multline}
    \Var[\Nup] = \E\left[\Nup\right] + 2 \,T \int_{0}^T \!\!\int_{0}^\infty \!\!\int_{0}^\infty \biggl(1-\frac{t}{T}\biggr) y_1 y_2 \\
    \times\bigl( f_{\mathbf{Q}}(u, u, y_1, y_2; \, t) \\
    - f_{\mathbf{P}}(u,y_1) \, f_{\mathbf{P}}(u,y_2)\bigr)\, \dd y_1 \, \dd y_2 \, \dd t.
\label{eq:variance_formula_triple_integral_main}
\end{multline}
The integrand measures the excess joint probability of two crossings at lag $t$ relative to the independent (Poisson) case. It can be further shown \cite{piterbarg1996asymptotic} that the asymptotic variance rate $\lim_{T \rightarrow \infty}\Var\left[\Nup\right] / T$ is finite provided $\int_0^\infty t \left(\left|r(t)\right| + \left|\rp(t)\right|+ \left|\rpp(t)\right|\right) \, \dd t < \infty$, yielding:
\begin{multline}
    \lim_{T\rightarrow \infty}\frac{\Var\left[\Nup\right]}{T} = \lim_{T\rightarrow \infty}\frac{\E\left[\Nup\right]}{T} \\
    + 2 \int_{0}^\infty \!\!\int_{0}^\infty \!\!\int_{0}^\infty y_1 y_2 \,\bigl( f_{\mathbf{Q}}(u, u, y_1, y_2; t) \\
    - f_{\mathbf{P}}(u,y_1) \, f_{\mathbf{P}}(u,y_2)\bigr)\, \dd y_1 \, \dd y_2 \, \dd t.
\label{eq:variance_formula_triple_integral_main_CLT}
\end{multline}
The Fano factor, defined as the ratio of variance to mean in the long-time limit, serves as a dimensionless measure of the regularity of crossing events. A Fano factor of unity corresponds to Poisson-distributed crossings; values below unity ($\FFup < 1$) indicate sub-Poissonian regularity (anti-bunching), while values above unity ($\FFup > 1$) signal super-Poissonian clustering (bunching). For upcrossings:
\begin{equation}
    \FFup = 1 + \frac{2\displaystyle\int_{0}^\infty \!\!\int_{0}^\infty \!\!\int_{0}^\infty y_1 y_2 \,G^\uparrow(t)\, \dd y_1 \, \dd y_2 \, \dd t}{\displaystyle\int_{0}^\infty y\, f_{\mathbf{P}}(u,\,y) \, \dd y}\, , \label{eq:fano_factor_upcrossing_1_main}
\end{equation}
where $G^\uparrow(t) = f_{\mathbf{Q}}(u, u, y_1, y_2; t) - f_{\mathbf{P}}(u,y_1) f_{\mathbf{P}}(u,y_2)$.
Using a similar approach, we can derive these quantities for the total crossing process $\Nu$ (see Appendix~\ref{section:crossings} for details):
\begin{multline}
    \Var[\Nu] = \E\left[\Nu\right] + 2 \,T \int_{0}^T \!\!\int_{-\infty}^\infty \!\!\int_{-\infty}^\infty \biggl(1-\frac{t}{T}\biggr) \\
    \times \left|y_1 y_2 \right|\bigl( f_{\mathbf{Q}}(u, u, y_1, y_2; \, t) \\
    - f_{\mathbf{P}}(u,y_1) \, f_{\mathbf{P}}(u,y_2)\bigr)\, \dd y_1 \, \dd y_2 \, \dd t,
\end{multline}
\begin{equation}
    \FF = 1 + \frac{2\displaystyle\int_{0}^\infty \!\!\int_{-\infty}^\infty \!\!\int_{-\infty}^\infty \left|y_1 y_2 \right| G(t)\, \dd y_1 \, \dd y_2 \, \dd t}{\displaystyle\int_{-\infty}^\infty |y|\, f_{\mathbf{P}}(u,\,y) \, \dd y}\, , \label{eq:fano_factor_upcrossing_1_main_cross}
\end{equation}
where $G(t) = f_{\mathbf{Q}}(u, u, y_1, y_2; t) - f_{\mathbf{P}}(u,y_1) f_{\mathbf{P}}(u,y_2)$.

While these expressions are exact, the double integrals over velocities ($y_1$, $y_2$) remain to be evaluated explicitly for a given level $u$. This is the central challenge addressed in the next section.

\begin{figure*}[!t]
    \centering
    \includegraphics[width=\textwidth]{sdho_phase.png}
    \caption[Mean rate, variance rate, and Fano factor of upcrossings for the stochastic damped harmonic oscillator as functions of level $u$ and damping ratio $\zeta$.]{\textbf{Mean rate, variance rate, and Fano factor of upcrossings for the stochastic damped harmonic oscillator as functions of level $u$ and damping ratio $\zeta$.}
    We plot (\textbf{a}) the mean rate of upcrossings $\E[\Nup]/T$; (\textbf{b}) the variance rate of upcrossings $\Var[\Nup]/T$; and (\textbf{c}) the upcrossing Fano factor $\FFup$ for the stochastic damped harmonic oscillator across various combinations of threshold ($u$) and damping constant ($\zeta$). The natural oscillation frequency is fixed at $\omega_0=1.0$ and the temperature at $\vartheta=1.0$, resulting in a process variance $r(0) = \vartheta/\omega_0^2 = 1.0$. Color bars indicate the magnitude of the respective quantities on a logarithmic scale. All plotted values are calculated using the analytical formulae derived in this work. Notably, for a fixed threshold $u$, the mean rate (\textbf{a}) is independent of the damping ratio $\zeta$, while the variance rate (\textbf{b}) is $\zeta$-dependent. Consequently, the Fano factor (\textbf{c}) spans a broad range, revealing both sub-Poissonian and super-Poissonian upcrossing statistics depending on the specific ($u, \zeta$) combination.
    }
    \label{fig:sdho}
\end{figure*}

\section{Main Result}
\label{section:main_result_upcrossing}
We first present the main result of this work as a theorem, and prove it subsequently.
\begin{theorem}
\label{thrm:main_theorem_upcrossing}
Consider a zero-mean stationary Gaussian stochastic process $X_t$ with the autocorrelation function $r(t)$ satisfying the properties defined in Section~\ref{section:description_of_problem}. Let $\Nup$ denote the number of upcrossings of an arbitrary level $u$.
Denoting $r(t)\rightarrow r$, $r(0)\rightarrow r_0$, $\rp(t) \rightarrow p$, $\rp(0)\rightarrow p_0$, $-\rpp(t) \rightarrow q$, $-\rpp(0)\rightarrow q_0$, and 
\begin{equation}
\begin{aligned}
\alpha &=-\frac{r+r_0}{2\left(p^2+(q-q_0)(r+r_0)\right)},\\
\beta &= -\frac{r_0-r}{2\left(p^2+(q+q_0)(r-r_0)\right)},\\
\gamma &=\frac{\sqrt{2}\,p}{r+r_0}\,u,\\
\delta &= \frac{1}{r+r_0}\,.
\end{aligned}
\end{equation}
The mean of $\Nup$ is given by:
\begin{equation}
    \E\left[\Nup\right] = T\, \frac{1}{2\pi}\sqrt{\frac{q_0}{r_0}} \, e^{-u^2/(2r_0)}, \label{eq:rice_formula_gaussian}
\end{equation}
and the finite-time variance of $\Nup$ is:
\begin{equation}
    \textnormal{Var}\left[\Nup\right] = \E\left[\Nup\right] + 2T \int_{0}^T \biggl(1-\frac{t}{T}\biggr)  \Iup  \dd t  \,,\label{eq:variance_finite_time_final}
\end{equation}
where $\Iup$ is a function of the time-dependent quantities $r, \, \alpha, \, \beta, \, \gamma, \, \delta$ given by:
\begin{multline}
    \Iup = \frac{e^{-\delta\,u^2}}{4\pi^2 \sqrt{r_0^2-r^2}} \Biggl[\frac{e^{-\alpha \gamma^2}}{2\sqrt{\alpha\beta}} \biggl(1 \\
    + \sqrt{\pi} \, \gamma \sqrt{\alpha{+}\beta} \, e^{\frac{\alpha^2 \gamma^2}{\alpha + \beta}} \, \Erf\Bigl(\frac{\alpha \gamma}{\sqrt{\alpha{+}\beta}}\Bigr)\biggr) \\
    + \pi \biggl(\frac{\alpha - \beta - 2\alpha\beta\gamma^2}{\alpha\beta}\biggr) \Ot\Bigl(\gamma\!\sqrt{\frac{2\alpha\beta}{\alpha{+}\beta}},\, \sqrt{\frac{\alpha}{\beta}}\Bigr)\Biggr] \\
    - \frac{1}{4\pi^2}\frac{q_0}{r_0}\,e^{-u^2/r_0},
\label{eq:Iup_final_result_owensT}
\end{multline}
where $\Erf(z)$ is the error function defined as:
\begin{equation}
\Erf(z) = \frac{2}{\sqrt{\pi}} \int_0^z e^{-t^2}  \dd t, 
\end{equation}
and $\Ot(h, a)$ is Owen's T function \cite{owen1980table} defined as:
\begin{equation}
\Ot(h, a) = \frac{1}{2\pi} \int_0^a \frac{e^{-\frac{1}{2}h^2(1+t^2)}}{1+t^2} \, \dd t. 
\end{equation}
\end{theorem}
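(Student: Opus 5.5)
Starting from the triple-integral representation \eqref{eq:variance_formula_triple_integral_main}, the mean formula \eqref{eq:rice_formula_gaussian} is exactly the Kac--Rice result \eqref{eq:rice_formula_specific_main} with $q_0=-\rpp(0)$. For the variance, it suffices to show that the inner double integral
\[
\int_0^\infty\!\!\int_0^\infty y_1y_2\, f_{\mathbf{Q}}(u,u,y_1,y_2;t)\,\dd y_1\,\dd y_2
\]
equals the bracketed term of $\Iup$. The subtracted product term factorizes into $\bigl(\int_0^\infty y f_{\mathbf{P}}(u,y)\dd y\bigr)^2=\frac{1}{4\pi^2}\frac{q_0}{r_0}e^{-u^2/r_0}$, which is the last term of \eqref{eq:Iup_final_result_owensT}.

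For the joint term, I would first condition on the positions. The factor $f(X_{t_1}=u,X_{t_2}=u)=\frac{1}{2\pi\sqrt{r_0^2-r^2}}e^{-u^2/(r_0+r)}$ produces the prefactor $e^{-\delta u^2}/\sqrt{r_0^2-r^2}$. The conditional law of $(\dot X_{t_1},\dot X_{t_2})$ given $X_{t_1}=X_{t_2}=u$ is bivariate Gaussian. Its covariance is obtained via the Schur complement of \eqref{eq:correlation_mat_Q_main}, and its mean is $\pm\frac{p\,u}{r_0+r}$, i.e.\ the mean vector is $(m,-m)$ by the antisymmetry of $\rp$. The covariance has the symmetric form $\begin{pmatrix}a&b\\b&a\end{pmatrix}$, so I would rotate to $s=(y_1+y_2)/\sqrt2$ and $d=(y_1-y_2)/\sqrt2$. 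These coordinates diagonalize the quadratic form, with eigen-directions whose inverse variances are $2\alpha$ and $2\beta$; checking the formulas for $\alpha,\beta$ against the Schur complement eigenvalues is routine algebra. The mean then sits purely on the $d$-axis at $\gamma$ (up to $\sqrt2$ conventions), since $m\sqrt2 = \sqrt2\,p u/(r+r_0)=\gamma$. The integrand becomes
\[
\tfrac12(s^2-d^2)\,e^{-\alpha s^2-\beta(d-\gamma)^2}
\]
or with the roles of $\alpha,\beta$ swapped, to be fixed by computation. The domain is the wedge $|d|<s$, i.e.\ the first quadrant rotated by $45^\circ$.

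Next I would integrate over the wedge. Since $y_1y_2$ is a quadratic polynomial, I would write it as a combination of derivatives of the Gaussian with respect to $\alpha$, $\beta$ and $\gamma$. This reduces the problem to the single master integral $J(\alpha,\beta,\gamma)=\iint_{\text{wedge}} e^{-\alpha s^2-\beta(d-\gamma)^2}\,\dd s\,\dd d$. After scaling, $J$ is a bivariate normal orthant-type probability with nonzero mean, which is classically expressible through $\Erf$ and Owen's $T$ function: the probability of a half-plane intersection equals a product of normal c.d.f.s minus $2\Ot$ terms. Equivalently, I can integrate $s$ first from $|d|$ to $\infty$, giving $\Erfc(\sqrt\alpha|d|)$, and then use the identity $\int e^{-\beta(d-\gamma)^2}\Erfc(\sqrt\alpha d)\,\dd d$, which produces $\Ot(h,a)$ with $h=\gamma\sqrt{2\alpha\beta/(\alpha+\beta)}$ and $a=\sqrt{\alpha/\beta}$, matching the statement.

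Differentiating $J$ yields three kinds of terms. Derivatives of Gaussian boundary factors give the $e^{-\alpha\gamma^2}/(2\sqrt{\alpha\beta})$ term. The $\Erf(\alpha\gamma/\sqrt{\alpha+\beta})$ term comes from the $\Erfc$ pieces. The Owen-$T$ term comes with coefficient $(\alpha-\beta-2\alpha\beta\gamma^2)/(\alpha\beta)$, using $\partial_h\Ot$ and $\partial_a\Ot$, which are elementary Gaussians times $\Erf$. Finally, substituting the result into \eqref{eq:variance_formula_triple_integral_main} gives \eqref{eq:variance_finite_time_final}.

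The main obstacle is this last step: organizing the derivative bookkeeping so that all the $\Ot$ derivative contributions collapse into the single stated bracket. I would first verify the result at $u=0$, where $\gamma=0$, $\Ot(0,a)=\arctan(a)/(2\pi)$, and the formula should reduce to Steinberg's result, and then use that check to fix sign and branch issues. Note that $\alpha,\beta>0$ follows from positive-definiteness of the conditional covariance.
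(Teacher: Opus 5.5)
Your proposal is correct. For the central step, though, it takes a genuinely different route from the paper.

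The paper inverts the full $4\times 4$ covariance and completes the square. It then proves $\alpha,\beta>0$ separately, via Cauchy--Schwarz on the spectral measure. Your conditioning on $X_{t_1}=X_{t_2}=u$ gives the prefactor $e^{-\delta u^2}/\sqrt{r_0^2-r^2}$ directly. It also gives the factor $\sqrt{\alpha\beta}$ from $\det(\text{Schur complement})=1/(4\alpha\beta)$. Positivity comes for free whenever $\Sigma_{\mathbf{Q}}(t)$ is nondegenerate, which is the same nondegeneracy the paper's Cauchy--Schwarz argument needs.

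The Schur complement also resolves your hedge about which coefficient goes where. Its eigenvalue along $(1,1)$ is $q_0+q-p^2/(r_0-r)=1/(2\beta)$, and along $(1,-1)$ it is $q_0-q-p^2/(r_0+r)=1/(2\alpha)$. So the correct integrand is $\tfrac12(s^2-d^2)e^{-\beta s^2-\alpha(d\mp\gamma)^2}$, which is the swapped version of your first guess. The sign of the shift is immaterial, since both the wedge and the weight are even in $d$.

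For the wedge integral, the paper integrates out the sum variable to get $\Erfc(\sqrt{\beta}|x|)$ and folds onto a half-line. It then integrates by parts against an explicit antiderivative, and needs Owen's identity 10,016.6 plus a batch of auxiliary Gaussian integrals. Your master-integral route is shorter. In fact $J=\frac{2\pi}{\sqrt{\alpha\beta}}\,\Ot(h,a)$ exactly, with $h=\gamma\sqrt{2\alpha\beta/(\alpha+\beta)}$ and $a=\sqrt{\alpha/\beta}$. This is an orthant probability with thresholds $\pm h$ and correlation $(\alpha-\beta)/(\alpha+\beta)$, so no normal c.d.f.\ terms survive; "product of normal c.d.f.s" is not the right description. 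The target is then $\partial_\alpha J-\partial_\beta J-\frac{\gamma}{\alpha}\partial_\gamma J-\gamma^2 J$.

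The bookkeeping you feared collapses:
the $\Ot$ coefficient $\pi(\alpha-\beta-2\alpha\beta\gamma^2)/(\alpha\beta)^{3/2}$ comes solely from differentiating the prefactor $(\alpha\beta)^{-1/2}$ together with the $-\gamma^2 J$ term;
all derivative-of-$\Ot$ terms combine into $\frac{\pi(\alpha+\beta)}{(\alpha\beta)^{3/2}}\bigl(a\,\partial_a\Ot-h\,\partial_h\Ot\bigr)$, which yields precisely $\frac{e^{-\alpha\gamma^2}}{2\alpha\beta}$ and the $\Erf$ term of Theorem~\ref{thrm:integral_main}.

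Your route gives a structural explanation of why Owen's $T$ appears with exactly these arguments, and a derivation of a few lines. The paper's route is more pedestrian but self-contained from table identities. Its intermediate results are reused for the total-crossing case; your method handles that case too, since the full-plane integral is elementary.
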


\textit{Proof sketch.} The key challenge is the explicit evaluation of the double integral over the velocity variables $(y_1, y_2)$ in Eq.~\eqref{eq:variance_formula_triple_integral_main} for an arbitrary level $u$. To the best of our knowledge, this has only been accomplished for mean-level crossings ($u=0$) \cite{steinberg1955short, leadbetter1965variance, pawula1968analysis, lindgren1974spectral, wilson2017periodicity}, but not for $u\neq 0$ due to the asymmetric Gaussian integral that arises. We overcome this difficulty as follows. First, we introduce a $\pi/4$ rotation of the velocity variables, $(y_1, y_2) \rightarrow (z_1, z_2)$, which maps the positive quadrant onto the region $|z_1| < z_2$ and transforms the product $y_1 y_2$ into $(z_2^2 - z_1^2)/2$. In the rotated frame, the quadratic form in the exponent of the joint p.d.f. $f_{\mathbf{Q}}$ can be simplified by completing the square in $z_1$, yielding $S = -\delta u^2 - \alpha(z_1 - \gamma)^2 - \beta z_2^2$. The parameters $\alpha > 0$ and $\beta > 0$ (their positivity is established via a Cauchy--Schwarz argument on the spectral measure; see Theorem~\ref{thrm:positivity_of_correlation_term}), while $\gamma \propto u \, r^\prime(t)$ captures the asymmetry introduced by a nonzero threshold. The resulting integral takes the canonical form $\int\!\!\int (z_2^2 - z_1^2) \exp(-\alpha(z_1 - \gamma)^2 - \beta z_2^2) \, \dd z_2 \, \dd z_1$, which we evaluate analytically (Theorem~\ref{thrm:integral_main}) using integration by parts combined with known identities for the error function and Owen's $T$ function. The full derivation is provided in Appendix~\ref{section:proof_theorem_upcrossing}.

\begin{figure*}[!t]
    \centering
    \includegraphics[width=\textwidth]{sdho_comparison.png}
    \caption[Comparison of analytical formulae with numerical simulations for the stochastic damped harmonic oscillator.]{\textbf{Comparison of analytical formulae with numerical simulations for the stochastic damped harmonic oscillator.}
    We plot (\textbf{a}) the mean number of upcrossings $\E[\Nup]$; (\textbf{b}) the variance of upcrossings $\Var[\Nup]$; and (\textbf{c}) the upcrossing Fano factor $\FFup$ as a function of the damping ratio $\zeta$ for three different threshold levels, $u=0.0, \, 0.25, \,0.5$.
    Solid lines represent the analytical solutions derived in this work, while circles denote results obtained from numerical simulations.
    The model parameters were fixed at $\omega_0=1.0$~Hz and $\temp=1.0$, for a total simulation time of $T=120$~s for each trial. For the numerical simulations, we used a discretization time step $dt=0.01 \times (\text{slowest timescale of the system})$ and computed the statistics from $5000$ independent trials. We find excellent agreement between the analytical predictions and the simulation results across the explored parameter range.
    }
    \label{fig:sdho_comparison}
\end{figure*}

In the long-time limit ($T \to \infty$), the variance rate and Fano factor take the compact forms:
\begin{equation}
    \lim_{T\rightarrow\infty}\frac{\textnormal{Var}\left[\Nup\right]}{T} =  \frac{1}{2\pi}\sqrt{\frac{q_0}{r_0}} \, e^{-\frac{u^2}{2r_0}} + 2 \int_{0}^\infty \Iup \dd t, \label{eq:variance_infinite_time_final}
\end{equation}
\begin{equation}
    \FFup = 1 + 4 \pi \sqrt{\frac{r_0}{q_0}} \, e^{\frac{u^2}{2r_0}} \int_{0}^\infty \Iup \, \dd t. \label{eq:fano_factor_final}
\end{equation}
Several important limits of these expressions deserve comment. First, when $u=0$ (mean-level crossings), the parameter $\gamma$ vanishes and Owen's $T$ function reduces to an arctangent, recovering the classical results of Steinberg \cite{steinberg1955short} and Leadbetter \cite{leadbetter1965variance} (see Appendix~\ref{section:special_formulae_mean}). Second, in the limit $u/\sqrt{r_0} \to \infty$, crossings become increasingly rare and uncorrelated, so the Fano factor approaches unity, consistent with the well-known result that rare threshold crossings become Poisson-distributed \cite{cramer_leadbetter_book}. Finally, the single-integral formulation in Eqs.~\eqref{eq:variance_finite_time_final}--\eqref{eq:fano_factor_final} is straightforward to evaluate numerically, requiring only standard quadrature and readily available implementations of the error function and Owen's $T$ function.

\begin{theorem}[See Appendix~\ref{section:explicit_formulae_for_crossings} for proof]
\label{thrm:main_theorem_crossing}
Consider a zero-mean stationary Gaussian stochastic process $X_t$ with the autocorrelation function $r(t)$ satisfying the properties defined in Section~\ref{section:description_of_problem}. Let $\Nu$ denote the number of total crossings of an arbitrary level $u$.
The mean of $\Nu$ is given by:
\begin{equation}
    \E\left[\Nu\right] = T\, \frac{1}{\pi}\sqrt{\frac{q_0}{r_0}} \, e^{-u^2/(2r_0)}, \label{eq:rice_formula_gaussian_cross}
\end{equation}
and the finite-time variance of $\Nu$ is:
\begin{equation}
    \textnormal{Var}\left[\Nu\right] = \E\left[\Nu\right] + 2T \int_{0}^T \biggl(1-\frac{t}{T}\biggr)  I  \dd t\,, \label{eq:variance_finite_time_final_cross}
\end{equation}
where $I$ is a function of the time-dependent quantities $r, \, \alpha, \, \beta, \, \gamma, \, \delta$ given by:
\begin{multline}
    I = \frac{e^{-\delta\,u^2}}{4\pi^2\sqrt{r_0^2-r^2}} \Biggl[\frac{2e^{-\alpha \gamma^2}}{\sqrt{\alpha\beta}} \biggl(1 \\
    + \sqrt{\pi} \, \gamma \sqrt{\alpha{+}\beta} \, e^{\frac{\alpha^2 \gamma^2}{\alpha + \beta}} \, \Erf\Bigl(\frac{\alpha \gamma}{\sqrt{\alpha{+}\beta}}\Bigr)\biggr) \\
    + 4\pi \biggl(\frac{\alpha - \beta - 2\alpha \beta \gamma^2}{\alpha\beta}\biggr) \\
    \times\biggl(\Ot\Bigl(\gamma\!\sqrt{\frac{2\alpha\beta}{\alpha{+}\beta}},\, \sqrt{\frac{\alpha}{\beta}}\Bigr)-\frac{1}{8}\biggr)\Biggr] \\
    - \frac{1}{\pi^2}\frac{q_0}{r_0}\,e^{-u^2/r_0}.
\label{eq:I_final_result_owensT_crossing_main}
\end{multline}
\end{theorem}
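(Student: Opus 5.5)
The plan is to reduce Theorem~\ref{thrm:main_theorem_crossing} to the computation already done for Theorem~\ref{thrm:main_theorem_upcrossing}, by splitting the velocity plane into its four quadrants. The mean formula follows at once from $\E[\Nu]=2\,\E[\Nup]$. For the variance, we start from the total-crossing analogue of Eq.~\eqref{eq:variance_formula_triple_integral_main}, which has $|y_1y_2|$ integrated over $\R^2$. We then write
\begin{equation*}
I=\int_{\R^2}|y_1y_2|\,f_{\mathbf{Q}}(u,u,y_1,y_2;t)\,\dd y_1\,\dd y_2-\Bigl(\int_{\R}|y|f_{\mathbf{P}}(u,y)\,\dd y\Bigr)^2 .
\end{equation*}
The subtracted product is $4$ times the square of the Kac--Rice integrand. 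This gives the constant $\frac{1}{\pi^2}\frac{q_0}{r_0}e^{-u^2/r_0}$, which is $4$ times the corresponding term in $\Iup$.

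Next we handle the quadrant contributions. Because $f_{\mathbf{Q}}$ is a centred Gaussian in $(X_{t_1},X_{t_2},\dot X_{t_1},\dot X_{t_2})$, evaluating it at $(u,u)$ and sending $(y_1,y_2)\to(-y_1,-y_2)$ is the same as evaluating it at $(-u,-u)$. Hence the $(-,-)$ quadrant equals the $(+,+)$ quadrant with $u\to-u$, i.e.\ $\gamma\to-\gamma$. Every term in the bracket of Eq.~\eqref{eq:Iup_final_result_owensT} is even in $\gamma$: $\gamma\,\Erf(\alpha\gamma/\sqrt{\alpha+\beta})$ is even, and $\Ot(h,a)$ is even in $h$. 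So the $(-,-)$ quadrant contributes the same amount as the $(+,+)$ quadrant.

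For the mixed quadrants we use the same $\pi/4$ rotation as in the proof of Theorem~\ref{thrm:main_theorem_upcrossing}. Then $|y_1y_2|=(z_1^2-z_2^2)/2$ on the region $|z_2|<|z_1|$, while the exponent $S=-\delta u^2-\alpha(z_1-\gamma)^2-\beta z_2^2$ is unchanged. The cleanest route is to compute the full-plane integral
\begin{equation*}
J=\int_{\R^2}\tfrac12(z_2^2-z_1^2)\,e^{-\alpha(z_1-\gamma)^2-\beta z_2^2}\,\dd z_1\,\dd z_2 ,
\end{equation*}
which is elementary since it involves only Gaussian second moments. Writing $A$ for the integral over the cone $|z_1|<z_2$ computed in Theorem~\ref{thrm:integral_main}, and $A'$ for the one over $|z_1|<-z_2$ (equal to $A$ because $z_2\to-z_2$ is a symmetry), the mixed region gives $A+A'-J=2A-J$. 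The total is therefore $4A-J$, times the common prefactor.

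It remains to match constants. We have $J=\frac{\pi}{2\sqrt{\alpha\beta}}\bigl(\frac{1}{2\beta}-\frac{1}{2\alpha}-\gamma^2\bigr)=\frac{\pi}{4}\cdot\frac{\alpha-\beta-2\alpha\beta\gamma^2}{(\alpha\beta)^{3/2}}$. Comparing with the Owen-$T$ coefficient appearing in $\Iup$, this should produce exactly the $-\tfrac18$ shift inside the bracket, while $4A$ reproduces the factor $4$ on the erf and Owen terms. I expect the main obstacle to be this bookkeeping: the overall normalisation $e^{-\delta u^2}/(4\pi^2\sqrt{r_0^2-r^2})$, the Jacobian of the rotation, and the powers of $\sqrt{\alpha\beta}$ absorbed into the displayed bracket must all be tracked consistently against Appendix~\ref{section:proof_theorem_upcrossing}. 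As a check, I would verify the $-\tfrac18$ at $u=0$, where $\Ot(0,a)=\arctan(a)/(2\pi)$ and the known Steinberg-type formula for total crossings can be compared directly. The long-time limit then follows exactly as in Eq.~\eqref{eq:variance_formula_triple_integral_main_CLT}.
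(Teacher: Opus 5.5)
Your proposal is correct and is essentially the paper's own argument. The paper also writes the rotated integral as $4I_0 - I'$ (Theorem~\ref{thrm:integral_crossing}). There $I_0$ is the cone integral of Theorem~\ref{thrm:integral_main}, the second cone is handled by the reflection $z_2\to -z_2$, and the mixed region is obtained by subtracting from the elementary full-plane integral of Lemma~\ref{lemma:for_crossing}.

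The bookkeeping you flag does close. Take $A$ as half the value in Theorem~\ref{thrm:integral_main}, so that it carries the same factor $\tfrac12$ as your $J$. Then $4A-J$ times $\frac{e^{-\delta u^2}}{2\pi^2}\sqrt{\alpha\beta/(r_0^2-r^2)}$ reproduces the displayed bracket exactly, including the $-\tfrac18$ shift.
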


\begin{corollary}[See Appendix~\ref{section:explicit_formulae_for_crossings} for proof]
In the long-time limit, the variance becomes:
\begin{equation}
    \lim_{T\rightarrow\infty}\frac{\textnormal{Var}\left[\Nu\right]}{T} =  \frac{1}{\pi}\sqrt{\frac{q_0}{r_0}} \, e^{-\frac{u^2}{2r_0}} + 2 \int_{0}^\infty I \dd t \label{eq:variance_infinite_time_final_cross}
\end{equation}
The corresponding Fano factor is:
\begin{equation}
    \FF = 1 + 2 \pi \sqrt{\frac{r_0}{q_0}} \, e^{\frac{u^2}{2r_0}} \int_{0}^\infty I \, \dd t \label{eq:fano_factor_final_cross}
\end{equation}
\end{corollary}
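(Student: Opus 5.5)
The corollary follows directly from Theorem~\ref{thrm:main_theorem_crossing}, in the same way as Eqs.~\eqref{eq:variance_infinite_time_final}--\eqref{eq:fano_factor_final} followed from Theorem~\ref{thrm:main_theorem_upcrossing}. The plan is to divide Eq.~\eqref{eq:variance_finite_time_final_cross} by $T$ and pass to the limit $T\to\infty$ term by term.

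\textbf{Mean term.} By Eq.~\eqref{eq:rice_formula_gaussian_cross}, $\E[\Nu]/T = \frac{1}{\pi}\sqrt{q_0/r_0}\,e^{-u^2/(2r_0)}$ exactly, so nothing needs to be controlled here.

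\textbf{Integral term.} We must show that
\begin{equation*}
\int_0^T \Bigl(1-\frac{t}{T}\Bigr) I(t)\,\dd t \;\longrightarrow\; \int_0^\infty I(t)\,\dd t .
\end{equation*}
Since $0\le (1-t/T)\mathbf{1}_{[0,T]}(t)\le 1$ and this factor tends pointwise to $1$, dominated convergence gives the limit provided $I\in L^1(0,\infty)$.

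To justify integrability, recall that $I(t)$ equals the velocity double integral $\int\!\!\int |y_1y_2|\,G(t)\,\dd y_1\,\dd y_2$, whose closed form is Eq.~\eqref{eq:I_final_result_owensT_crossing_main}. Under the decay hypothesis $\int_0^\infty t(|r|+|\rp|+|\rpp|)\,\dd t<\infty$, the result of \cite{piterbarg1996asymptotic} already quoted for Eq.~\eqref{eq:variance_formula_triple_integral_main_CLT} applies verbatim to total crossings. The triple integral defining the limit of $\Var[\Nu]/T$ is therefore finite, and the same argument yields absolute integrability of $I$. Concretely, this rests on two facts:
\begin{itemize}
\item[(i)] Near $t=0$, Geman's condition~\eqref{eq:geman_condition} controls the behaviour of $I$.
\item[(ii)] For large $t$, $G(t)$ is a difference of Gaussian densities whose covariance perturbation is $O(|r|+|\rp|+|\rpp|)$. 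Hence $|I(t)|\lesssim |r|+|\rp|+|\rpp|$, which is integrable.
\end{itemize}
This yields Eq.~\eqref{eq:variance_infinite_time_final_cross}.

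\textbf{Fano factor.} Divide the limiting variance rate by the mean rate $\frac{1}{\pi}\sqrt{q_0/r_0}\,e^{-u^2/(2r_0)}$. The mean term gives $1$, and the integral term gives
\begin{equation*}
2\int_0^\infty I\,\dd t\cdot \pi\sqrt{\frac{r_0}{q_0}}\,e^{u^2/(2r_0)},
\end{equation*}
which is Eq.~\eqref{eq:fano_factor_final_cross}.

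The only nontrivial point is integrability of $I$ at infinity, i.e.\ point (ii). I would rely on the cited asymptotic result rather than bounding the explicit Owen's $T$ expression directly.
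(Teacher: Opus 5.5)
Your proposal is correct and follows essentially the paper's route. You divide the finite-time formula of Theorem~\ref{thrm:main_theorem_crossing} by $T$, pass to the limit under the decay condition~\eqref{eq:pieterbarg_condition}, and divide by the Rice mean rate to obtain $\FF$. The one difference is minor and in your favor: the paper drops the correction term $\frac{2}{T}\int_0^\infty t\,(\cdots)\,\dd t$ by asserting that $\int_0^\infty t\,I\,\dd t$ is finite, whereas your dominated-convergence step needs only $I\in L^1(0,\infty)$.
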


\section{Applications}
\label{section:applications_upcrossing}

In this section, we verify the explicit variance and Fano factor results for upcrossings and total crossings of stationary Gaussian processes with different correlation functions. 

\subsection{Stochastic Damped Harmonic Oscillator}
\label{sec:sdho}

\begin{figure*}[!t]
    \centering
    \includegraphics[width=\textwidth]{sdho_fanos.png}
    \caption[Fano factor landscapes for the stochastic damped harmonic oscillator across different parameter regimes.]{\textbf{Fano factor landscapes for the stochastic damped harmonic oscillator across different parameter regimes.}
    We plot the analytical upcrossing Fano factor ($\FFup$) for the stochastic damped harmonic oscillator across various parameter configurations.
    (\textbf{a}--\textbf{c}) Fano factor as a function of the natural frequency $\omega_0$ and threshold level $u$, for a fixed temperature $\vartheta=1.0$ and different damping ratios: (\textbf{a}) underdamped $\zeta=0.5$; (\textbf{b}) critically damped $\zeta=1.0$; and (\textbf{c}) overdamped $\zeta=2.0$. 
    (\textbf{d}--\textbf{f}) Fano factor as a function of the temperature (noise strength parameter) $\vartheta$ and threshold level $u$, for a fixed natural frequency $\omega_0=1.0$ and the same damping ratios: (\textbf{d}) underdamped $\zeta=0.5$; (\textbf{e}) critically damped $\zeta=1.0$; and (\textbf{f}) overdamped $\zeta=2.0$. 
    The color bars indicate the magnitude of the Fano factor on a logarithmic scale, where values near $\FFup=1.0$ are represented by lighter colors, sub-Poissonian values by purples, and super-Poissonian values by greens.
    }
    \label{fig:sdho_fanos}
\end{figure*}

The stochastic damped harmonic oscillator is a fundamental model in statistical physics \cite{doob1942brownian, west1982linear, chechkin2000linear, lin2011undamped, dybiec2017underdamped} describing systems subject to a restoring force, damping, and thermal fluctuations. While the first-order statistics of upcrossings for this model have been studied previously \cite{verechtchaguina2006first, masoliver2023counting, masoliver2025level}, here we apply our results to obtain the full second-order statistics.

The oscillator is governed by the stochastic differential equation
\begin{equation}
    \frac{\dd^2 x}{\dd t^2} + 2 \zeta \omega_0 \frac{\dd x}{\dd t} + \omega_0^2 x = \sqrt{4 \zeta \omega_0 \temp} \, \eta(t), \label{eq:sdho_sde}
\end{equation}
where $\omega_0 > 0$ is the natural frequency, $\zeta \ge 0$ is the dimensionless damping ratio, $\temp$ is the temperature (with $k_B = 1$), and $\eta(t)$ is zero-mean Gaussian white noise with $\E[\eta(t)\eta(s)]=\delta(t-s)$. The noise amplitude satisfies the fluctuation-dissipation theorem. The stationary position $x(t)$ is a zero-mean Gaussian process with variance $r_x(0)=\temp/\omega_0^2$ in all damping regimes. The autocorrelation function $r_x(t)$, obtained via the Wiener-Khinchin theorem from the power spectral density \cite{rawat2024element} (see Appendix~\ref{section:sdho_derivation} for the details), takes the following well-known forms \cite{doob1942brownian}:

\paragraph{Underdamped (\( 0 < \zeta < 1 \)):}
\begin{multline}
    r_x(t) = \frac{\temp}{\omega_0^2} e^{-\zeta \omega_0 |t|} \biggl( \cos\bigl(\omega_0 \sqrt{1{-}\zeta^2}\, |t|\bigr) \\
    + \frac{\zeta}{\sqrt{1{-}\zeta^2}} \sin\bigl(\omega_0 \sqrt{1{-}\zeta^2}\, |t|\bigr) \biggr).
\label{eq:acf_underdamped}
\end{multline}

\paragraph{Critically Damped (\( \zeta = 1 \)):}
\begin{equation}
    r_x(t) = \frac{\temp}{\omega_0^2} e^{-\omega_0 |t|} \left( 1 + \omega_0 |t| \right).
\label{eq:acf_critical}
\end{equation}

\paragraph{Overdamped (\( \zeta > 1 \)):}
\begin{multline}
    r_x(t) = \frac{\temp}{2 \omega_0^2} \biggl[ \Bigl(1 + \frac{\zeta}{\sqrt{\zeta^2{-}1}}\Bigr) e^{-\omega_0(\zeta - \sqrt{\zeta^2-1})|t|} \\
    + \Bigl(1 - \frac{\zeta}{\sqrt{\zeta^2{-}1}}\Bigr) e^{-\omega_0(\zeta + \sqrt{\zeta^2-1})|t|} \biggr].
\label{eq:acf_overdamped}
\end{multline}

\begin{figure*}[!t]
    \centering
    \includegraphics[width=\textwidth]{OU_noise_phase.png} 
    \caption[Mean rate, variance rate, and Fano factor of upcrossings for a mean-reverting process driven by Ornstein-Uhlenbeck noise.]{\textbf{Mean rate, variance rate, and Fano factor of upcrossings for a mean-reverting process driven by Ornstein-Uhlenbeck noise.}
    We plot (\textbf{a}) the mean rate of upcrossings $\E[\Nup]/T$; (\textbf{b}) the variance rate of upcrossings $\Var[\Nup]/T$; and (\textbf{c}) the upcrossing Fano factor $\FFup$ for a mean-reverting process with characteristic timescale $\tau_e$, driven by Ornstein-Uhlenbeck (OU) noise with correlation time $\tau_f$; this system is described by Eq.~\eqref{eq:OU_noise_eqns}. The plots illustrate the dependence on two dimensionless parameters: the normalized threshold $u/\sigma$ and the timescale ratio $\kappa=\tau_f/\tau_e$.
    We set the standard deviation of the input OU noise $\sigma=1.0$ and its correlation time $\tau_f=3$~ms.
    Note that the resulting variance of the observed mean-reverting process $y(t)$ is $r_y(0)=\sigma^2 \kappa/(1+\kappa)$. 
    Color bars indicate the magnitude of the respective quantities on a logarithmic scale. All plotted values are computed using the analytical formulae derived in this work. We find that the Fano factor (\textbf{c}) spans a broad range, revealing both sub-Poissonian and super-Poissonian upcrossing statistics depending on the specific ($u/\sigma, \kappa$) combination.
    }
    \label{fig:OU_noise}
\end{figure*}

With these correlation functions, we can now compute the mean, variance, and Fano factor for upcrossings using Theorem~\ref{thrm:main_theorem_upcrossing}. In Fig.~\ref{fig:sdho}, we present these quantities as functions of the threshold level $u$ and the damping ratio $\zeta$, with $\temp=\omega_0=1$ so that $r_x(0)=1.0$ for all $\zeta$.

Notably, the mean upcrossing rate (Fig.~\ref{fig:sdho}a) is independent of $\zeta$ for a given $u$. This follows directly from the Rice formula (Eq.~\eqref{eq:rice_formula_gaussian}), which depends only on $r_x(0)$ and $q_0 = -r_x^{\prime\prime}(0)$, quantities determined by the equipartition theorem ($r_x(0) = \temp/\omega_0^2$ and $q_0 = \temp$) and thus insensitive to the damping mechanism. In contrast, the variance rate and Fano factor (Fig.~\ref{fig:sdho}b,c) are strongly $\zeta$-dependent, spanning a wide range from sub-Poissonian to super-Poissonian. Physically, in the underdamped regime, oscillatory correlations impose a quasi-periodic structure on the crossing times, leading to regularity and sub-Poissonian statistics ($\FFup < 1$). 
In the overdamped regime, the oscillatory mechanism 
is lost, and long-lived monotonic excursions above 
the threshold can generate bursts of closely spaced 
crossings, resulting in super-Poissonian statistics 
($\FFup > 1$) across all threshold levels for 
sufficiently large damping ($\zeta\gtrsim 2$), as shown 
in Fig.~\ref{fig:sdho}c. 
This demonstrates that the full correlation structure 
of the process, not merely its local properties at $t=0$, 
governs the variability of crossing events.

To validate our analytical framework, we compared the analytical solutions with direct numerical simulations (Fig.~\ref{fig:sdho_comparison}) and find excellent agreement across a wide range of parameters and threshold levels.

We further investigate the behavior of the upcrossing Fano factor $\FFup$, detailed in Fig.~\ref{fig:sdho_fanos}, by examining its landscape across various parameter configurations. First, we set the temperature to unity, i.e., $\temp=1.0$, and compute $\FFup$ as a function of the threshold $u$ and the natural frequency $\omega_0$ for different damping regimes (Fig.~\ref{fig:sdho_fanos}a--c). Then we set the natural frequency $\omega_0$ to unity and compute $\FFup$ as a function of the threshold $u$ and temperature $\temp$ for the same damping regimes (Fig.~\ref{fig:sdho_fanos}d--f). These heatmaps reveal distinct characteristics of $\FFup$ based on the damping regime:
\begin{itemize}
    \item In the underdamped ($\zeta=0.5$; plots \textbf{a}, \textbf{d}) and critically damped ($\zeta=1.0$; plots \textbf{b}, \textbf{e}) regimes, the statistics are predominantly sub-Poissonian ($\FFup < 1$), reflecting the regularity imposed by oscillatory or near-oscillatory correlations.
    \item In the overdamped regime ($\zeta=2.0$; plots \textbf{c}, \textbf{f}), regions of both sub-Poissonian and super-Poissonian ($\FFup > 1$) statistics are evident, reflecting the competition between Gaussian suppression of high-threshold crossings and clustering induced by long-lived correlations.
    \item Non-monotonic behavior in $\FFup$ is observed 
    in the critically damped and overdamped regimes, 
    reflecting a complex dependence on both the threshold 
    level $u$ and the damping ratio $\zeta$, as shown in 
    Fig.~\ref{fig:sdho_fanos}b,c,e,f.   
\end{itemize}

\subsection{Mean Reverting Process with Ornstein-Uhlenbeck Noise}
\label{sec:filtered_noise}
We consider a mean-reverting process driven by Ornstein-Uhlenbeck noise. This model is relevant in various fields, including finance for modeling stochastic volatility and in neuroscience for describing the membrane equation with filtered synaptic inputs \cite{moreno2002response}. The dynamics of the system are given by the following set of stochastic differential equations:
\begin{equation}
\begin{split}
    \frac{\mathrm{d}x}{\mathrm{d}t} &= -\frac{x}{\tau_f} + \sqrt{\frac{2 \sigma^2}{\tau_f}}\,\eta(t) \\
    \frac{\mathrm{d}y}{\mathrm{d}t} &= -\frac{1}{\tau_e}\Bigl(y - x\Bigr).
\end{split} \label{eq:OU_noise_eqns}
\end{equation}
Here, $x(t)$ is an Ornstein-Uhlenbeck process obtained by exponentially low-pass filtering the white-noise process \(\eta(t)\), where \(\E\left[\eta(t)\,\eta(s)\right]=\delta(t-s)\). The OU process $x(t)$ has a characteristic correlation time $\tau_f$, and its fluctuations are scaled by the parameter $\sigma$. The process $y(t)$ receives $x(t)$ as its input and exhibits mean-reverting behavior toward $0$ with a characteristic timescale $\tau_e$. Since $y(t)$ is a linear transformation of the Gaussian process $x(t)$, it is also a stationary Gaussian process.

To analyze the level-crossing statistics of $y(t)$, we first need its autocorrelation function. Following a procedure similar to that used for the stochastic damped harmonic oscillator (i.e., by solving the system in the Fourier domain), the autocorrelation function $r_y(t) = \E[y(s)y(s+t)]$ for the process $y(t)$ is found to be:
\begin{equation}
    r_y(t) = \frac{\sigma^2\kappa}{1 -\kappa^2} \left(e^{-|t|/\tau_e} - \kappa \, e^{-|t|/(\kappa \tau_e)} \right), \label{eq:corr_func_OU_noise}
\end{equation}
where $\kappa=\tau_f/\tau_e$ is the dimensionless ratio of the input noise correlation timescale $\tau_f$ to the system's relaxation timescale $\tau_e$.  The variance of this process is $r_y(0) = \sigma^2\kappa/(1+\kappa)$.

Using this autocorrelation function $r_y(t)$, we can apply Theorem~\ref{thrm:main_theorem_upcrossing} to compute the mean, variance, and Fano factor for level upcrossings of $y(t)$. The parameter $\kappa = \tau_f / \tau_e$ controls how filtered the noise appears to the mean-reverting process: when $\kappa \ll 1$, the driving noise varies much faster than the system 
relaxes and therefore appears nearly white, whereas 
when $\kappa\sim1$ or larger the noise varies slowly 
relative to the system's relaxation timescale. 

Figure~\ref{fig:OU_noise} illustrates these quantities 
as functions of the normalized threshold $u/\sigma$ 
and $\kappa$. The Fano factor (Fig.~\ref{fig:OU_noise}c) 
spans a broad range from sub-Poissonian to super-Poissonian, mirroring the behavior of the overdamped harmonic oscillator. 
This similarity arises because the bi-exponential form 
of $r_y(t)$ in Eq.~\eqref{eq:corr_func_OU_noise} can be 
mapped exactly to the overdamped SDHO correlation function (Eq.~\eqref{eq:acf_overdamped}), illustrating how physically distinct systems can exhibit identical crossing statistics 
when their correlation structures coincide. 
A particularly striking feature of Fig.~\ref{fig:OU_noise}c 
is the reentrant behavior observed for intermediate 
values of $\kappa$: as the threshold $u/\sigma$ is 
increased, the Fano factor transitions from the sub-Poissonian 
regime to the super-Poissonian regime and back. This 
reentrant behavior highlights how the interplay between 
the threshold level and the correlation structure can 
produce a rich statistical landscape even in a simple 
two-timescale system.

\subsection{Process with Rational Quadratic Correlation Function}
\label{sec:rational_quadratic}
\begin{figure}[!t]
    \centering
    \includegraphics[width=\columnwidth]{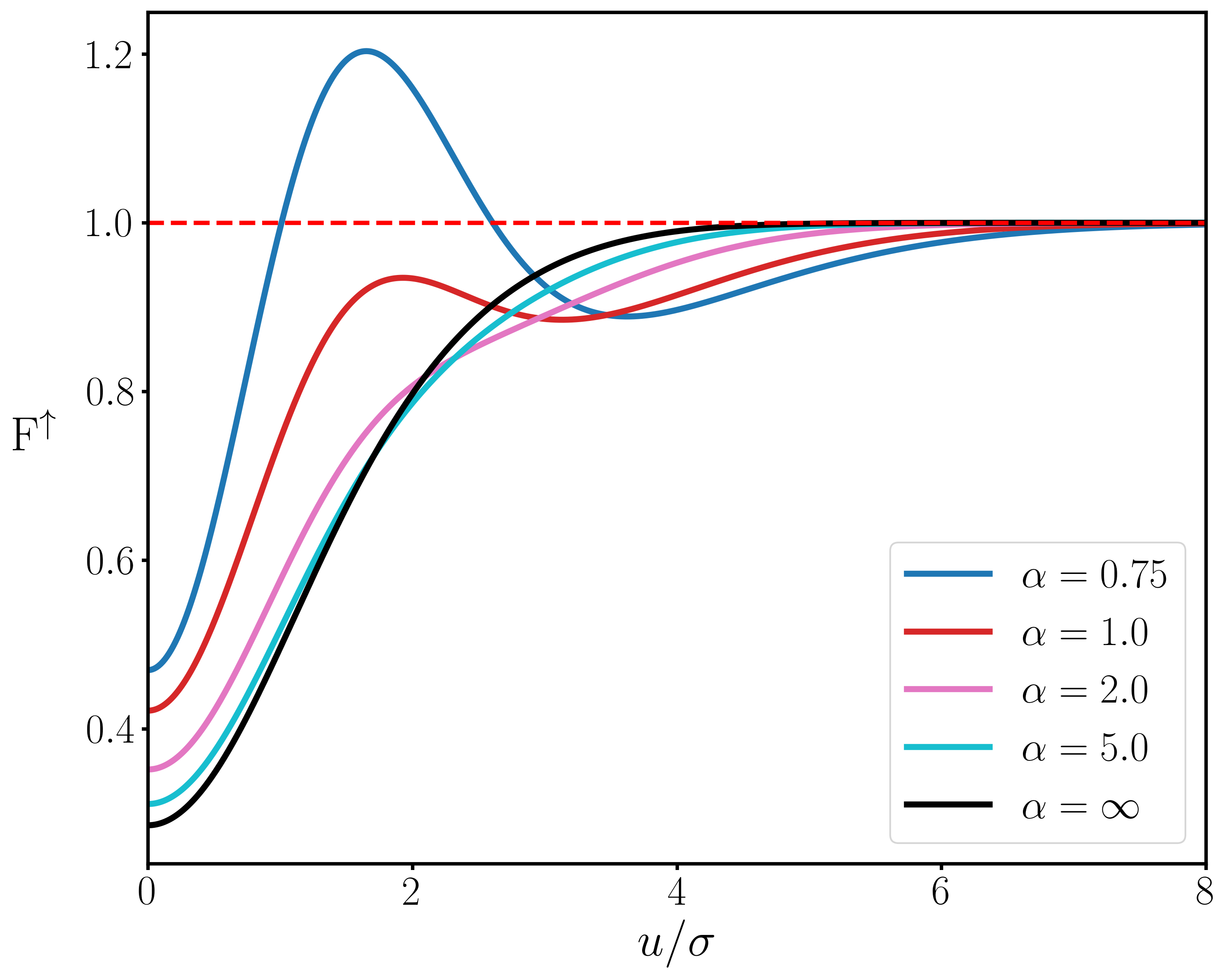} 
    \caption[Fano factor for a stationary Gaussian process with a rational-quadratic correlation function.]{\textbf{Fano factor for a stationary Gaussian process with a rational-quadratic correlation function.}
    We plot the upcrossing Fano factor $\FFup$ as a function of the normalized threshold level $u/\sigma$ for a stochastic process with the 
    rational-quadratic correlation function, $r(t) = \sigma^2 \, \left(1 + \frac{1}{2\alpha}\left(\frac{t}{\tau}\right)^2\right)^{-\alpha}$. Each colored line corresponds to a different value of the shape parameter $\alpha$. The black line represents the $\alpha \to \infty$ limit, where the correlation function simplifies to a squared exponential form, $r(t) = \sigma^2 \exp\left(-t^2/(2\tau^2)\right)$. As the Fano factor is evaluated in the infinite time limit, the results presented are independent of the timescale parameter $\tau$. 
    The behavior of $\FFup$ varies markedly with $\alpha$. For smaller values (e.g., $\alpha= 0.75$), $\FFup$ can exceed 1, indicating a regime of super-Poissonian statistics for certain $u/\sigma$. Conversely, for larger $\alpha$ values $\FFup$ remains sub-Poissonian ($\FFup < 1$) for all $u/\sigma$. For all $\alpha$, the Fano factor approaches the Poisson limit ($\FFup \to 1$) as $u/\sigma \rightarrow \infty$.
    }
    \label{fig:rational_quadratic}
\end{figure}

We consider a stationary Gaussian process whose covariance follows the rational-quadratic kernel, a popular choice in Gaussian-process regression \cite{williams2006gaussian}. This kernel can be written as a continuous mixture of squared-exponential kernels with different characteristic timescales, enabling it to capture variability that occurs over a wide range of timescales. For such a process $X_t$, its rational-quadratic correlation function $r(t)$ is given by:
\begin{equation}
    r(t) = \sigma^2 \, \left[1 + \frac{1}{2\alpha}\left(\frac{t}{\tau}\right)^2\right]^{-\alpha}
\end{equation}
where $\sigma^2 = r(0)$ is the variance of the process, $\tau>0$ is a characteristic timescale parameter, and $\alpha>0$ is a shape parameter that dictates the long-time behavior and smoothness of the correlation. As $\alpha\rightarrow\infty$, the rational-quadratic correlation function approaches the squared exponential correlation function: $r(t) = \sigma^2 \exp\left(-t^2/(2\tau^2)\right)$.

In Fig.~\ref{fig:rational_quadratic}, we plot the upcrossing Fano factor $\FFup$ as a function of the normalized threshold $u/\sigma$ for several values of $\alpha$. Since the Fano factor is evaluated in the long-time limit, the results are independent of $\tau$, depending only on $\alpha$ and $u/\sigma$. The shape parameter $\alpha$ critically influences the crossing statistics: smaller $\alpha$ (corresponding to heavier-tailed, longer-range correlations) leads to super-Poissonian Fano factors, while larger $\alpha$ (faster-decaying correlations) yields sub-Poissonian statistics. For very high thresholds, the Fano factor approaches unity for all $\alpha$, consistent with the 
well-known result that rare threshold crossings 
become Poisson-distributed.

\section{Discussion}

In this work, we presented exact analytical expressions for the variance and Fano factor of \textit{arbitrary} level crossings in smooth, stationary Gaussian processes. Our formulae, encapsulated in Theorems~\ref{thrm:main_theorem_upcrossing} and  \ref{thrm:main_theorem_crossing}, provide a single-integral formulation for the variance, analogous to the well-established result for mean-level upcrossings \cite{steinberg1955short}, but now generalized to any level $u$. This was achieved by explicitly solving the requisite asymmetric Gaussian integrals, expressed in terms of known special functions: the error function and Owen's $T$ function.

Our analysis highlights that the mean crossing rate and the crossing variability capture distinct properties of the underlying stochastic process. While the Kac-Rice mean depends only on the local properties of the autocorrelation function at the origin ($r(0)$ and $r^{\prime\prime}(0)$), the variance and Fano factor probe the entire correlation structure at all lag times. This distinction is vividly illustrated by the stochastic damped harmonic oscillator, where the mean upcrossing rate is completely independent of the damping ratio $\zeta$, yet the Fano factor varies dramatically, from strongly sub-Poissonian in the underdamped regime to super-Poissonian in the overdamped regime. The physical mechanism underlying this transition is transparent: oscillatory correlations impose regularity on crossing times (anti-bunching), while monotonically decaying correlations enable clustering (bunching). 
This sub-to-super-Poissonian transition appears to be a 
general feature of processes whose correlation function 
transitions from oscillatory to monotone decay, as further 
illustrated by the mathematical equivalence between the 
mean-reverting OU-noise model and the overdamped oscillator. 
The analytical expressions offer versatile tools with several important practical implications. First, they significantly enhance the toolkit for parameter estimation and inference in systems modeled by stationary Gaussian processes. While the Kac-Rice formula for the mean number of crossings is invaluable, the incorporation of exact second-order statistics can lead to more robust and accurate parameter determination, particularly when the mean rate alone is insufficient to discriminate between models, for instance, processes with identical $r(0)$ and $r^{\prime\prime}(0)$ but different $\zeta$ values. The Fano factor, as a single dimensionless summary statistic, is particularly attractive for experimental comparisons: it can be estimated from empirical data without requiring full time-series analysis and compared directly with our analytical predictions. Second, our results provide a solid foundation for developing and validating tighter upper and lower bounds on the variance of level crossings, extending previous work on mean-crossing statistics \cite{assaf2023asymptotic} to arbitrary levels.

In neuroscience, the Fano factor of spike trains is a standard metric for distinguishing regular from bursty firing patterns \cite{badel2011firing, gowers2023upcrossing}; our theory provides exact predictions for Gaussian-modeled membrane potential fluctuations. In structural and reliability engineering, the variance of threshold crossings governs fatigue life predictions, where the mean crossing rate alone can significantly underestimate failure risk. In environmental and climate science, the clustering of extreme events (super-Poissonian crossings of temperature or precipitation thresholds) has direct implications for risk assessment and policy.

Our analytical framework currently assumes that the underlying process is stationary, Gaussian, and smooth.
Extending these exact results to non-stationary or non-Gaussian regimes would require higher-order spectral information beyond the autocorrelation function.
Future work will be directed toward developing parameter estimation techniques that directly incorporate the analytical variance of level crossings derived herein, as well as exploring extensions to higher-order crossing moments and to multivariate Gaussian processes.

\section*{Code Availability}
The code used to produce the numerical simulations and figures in this work is publicly available at \url{https://github.com/shivangrawat/gaussian-crossings}.

\bibliographystyle{IEEEtran}
\bibliography{references}

\begin{thebibliography}{10}
\providecommand{\url}[1]{#1}
\csname url@samestyle\endcsname
\providecommand{\newblock}{\relax}
\providecommand{\bibinfo}[2]{#2}
\providecommand{\BIBentrySTDinterwordspacing}{\spaceskip=0pt\relax}
\providecommand{\BIBentryALTinterwordstretchfactor}{4}
\providecommand{\BIBentryALTinterwordspacing}{\spaceskip=\fontdimen2\font plus
\BIBentryALTinterwordstretchfactor\fontdimen3\font minus
  \fontdimen4\font\relax}
\providecommand{\BIBforeignlanguage}[2]{{%
\expandafter\ifx\csname l@#1\endcsname\relax
\typeout{** WARNING: IEEEtran.bst: No hyphenation pattern has been}%
\typeout{** loaded for the language `#1'. Using the pattern for}%
\typeout{** the default language instead.}%
\else
\language=\csname l@#1\endcsname
\fi
#2}}
\providecommand{\BIBdecl}{\relax}
\BIBdecl

\bibitem{verechtchaguina2006first}
T.~Verechtchaguina, I.~M. Sokolov, and L.~Schimansky-Geier, ``First passage
  time densities in resonate-and-fire models,'' \emph{Physical Review
  E—Statistical, Nonlinear, and Soft Matter Physics}, vol.~73, no.~3, p.
  031108, 2006.

\bibitem{tchumatchenko2010correlations}
T.~Tchumatchenko, A.~Malyshev, T.~Geisel, M.~Volgushev, and F.~Wolf,
  ``Correlations and synchrony in threshold neuron models,'' \emph{Physical
  review letters}, vol. 104, no.~5, p. 058102, 2010.

\bibitem{badel2011firing}
L.~Badel, ``Firing statistics and correlations in spiking neurons: A
  level-crossing approach,'' \emph{Nature Precedings}, pp. 1--1, 2011.

\bibitem{gowers2023upcrossing}
R.~P. Gowers and M.~J. Richardson, ``Upcrossing-rate dynamics for a minimal
  neuron model receiving spatially distributed synaptic drive,'' \emph{Physical
  Review Research}, vol.~5, no.~2, p. 023095, 2023.

\bibitem{lutes2004random}
L.~D. Lutes and S.~Sarkani, \emph{Random vibrations: analysis of structural and
  mechanical systems}.\hskip 1em plus 0.5em minus 0.4em\relax Elsevier, 2004.

\bibitem{jafari2006level}
G.~Jafari, M.~S. Movahed, S.~Fazeli, M.~R.~R. Tabar, and S.~Masoudi, ``Level
  crossing analysis of the stock markets,'' \emph{Journal of Statistical
  Mechanics: Theory and Experiment}, vol. 2006, no.~06, p. P06008, 2006.

\bibitem{shayeganfar2012level}
F.~Shayeganfar, M.~H{\"o}lling, J.~Peinke, and M.~R.~R. Tabar, ``The level
  crossing and inverse statistic analysis of german stock market index (dax)
  and daily oil price time series,'' \emph{Physica A: Statistical Mechanics and
  its Applications}, vol. 391, no. 1-2, pp. 209--216, 2012.

\bibitem{kac1943}
M.~Kac, ``{On the average number of real roots of a random algebraic
  equation},'' \emph{Bulletin of the American Mathematical Society}, vol.~49,
  no.~4, pp. 314 -- 320, 1943.

\bibitem{rice1944mathematical}
S.~O. Rice, ``Mathematical analysis of random noise,'' \emph{The Bell System
  Technical Journal}, vol.~23, no.~3, pp. 282--332, 1944.

\bibitem{Kratz2006_review}
\BIBentryALTinterwordspacing
M.~F. Kratz, ``Level crossings and other level functionals of stationary
  gaussian processes,'' \emph{Probability Surveys}, vol.~3, pp. 230--288, 2006.
  [Online]. Available: \url{http://eudml.org/doc/224029}
\BIBentrySTDinterwordspacing

\bibitem{steinberg1955short}
H.~Steinberg, P.~M. Schultheiss, C.~A. Wogrin, and F.~Zweig, ``Short-time
  frequency measurement of narrow-band random signals by means of a zero
  counting process,'' \emph{Journal of Applied Physics}, vol.~26, no.~2, pp.
  195--201, 1955.

\bibitem{azais1999bounds}
J.-M. Aza{\"\i}s, C.~Cierco-Ayrolles, and A.~Croquette, ``Bounds and asymptotic
  expansions for the distributionof the maximum of a smooth stationary gaussian
  process,'' \emph{ESAIM: Probability and Statistics}, vol.~3, pp. 107--129,
  1999.

\bibitem{cierco2003computing}
C.~Cierco-Ayrolles, A.~Croquette, and C.~Delmas, ``Computing the distribution
  of the maximum of gaussian random processes,'' \emph{Methodology and
  Computing in Applied Probability}, vol.~5, pp. 427--438, 2003.

\bibitem{ito1963expected}
K.~Ito, ``The expected number of zeros of continuous stationary gaussian
  processes,'' \emph{Journal of Mathematics of Kyoto University}, vol.~3,
  no.~2, pp. 207--216, 1963.

\bibitem{ylvisaker1965expected}
N.~D. Ylvisaker, ``The expected number of zeros of a stationary gaussian
  process,'' \emph{The Annals of Mathematical Statistics}, vol.~36, no.~3, pp.
  1043--1046, 1965.

\bibitem{geman1972variance}
D.~Geman, ``On the variance of the number of zeros of a stationary gaussian
  process,'' \emph{The Annals of Mathematical Statistics}, pp. 977--982, 1972.

\bibitem{kratz2006}
\BIBentryALTinterwordspacing
M.~F. Kratz and J.~R. Le{\'o}n, ``{On the second moment of the number of
  crossings by a stationary Gaussian process},'' \emph{The Annals of
  Probability}, vol.~34, no.~4, pp. 1601 -- 1607, 2006. [Online]. Available:
  \url{https://doi.org/10.1214/009117906000000142}
\BIBentrySTDinterwordspacing

\bibitem{cramer_leadbetter_book}
H.~Cram\'er and M.~R. Leadbetter, \emph{Stationary and related stochastic
  processes. {S}ample function properties and their applications}.\hskip 1em
  plus 0.5em minus 0.4em\relax John Wiley \& Sons, Inc., New
  York-London-Sydney, 1967.

\bibitem{piterbarg1996asymptotic}
V.~I. Piterbarg, \emph{Asymptotic methods in the theory of Gaussian processes
  and fields}.\hskip 1em plus 0.5em minus 0.4em\relax American Mathematical
  Soc., 1996, vol. 148.

\bibitem{owen1980table}
D.~B. Owen, ``A table of normal integrals,'' \emph{Communications in
  Statistics-Simulation and Computation}, vol.~9, no.~4, pp. 389--419, 1980.

\bibitem{leadbetter1965variance}
M.~R. Leadbetter and J.~D. Cryer, ``{The variance of the number of zeros of a
  stationary normal process},'' \emph{Bulletin of the American Mathematical
  Society}, vol.~71, no. 3.P1, pp. 561 -- 563, 1965.

\bibitem{pawula1968analysis}
R.~Pawula, ``Analysis of an estimator of the center frequency of a power
  spectrum,'' \emph{IEEE Transactions on Information Theory}, vol.~14, no.~5,
  pp. 669--676, 1968.

\bibitem{lindgren1974spectral}
G.~Lindgren, ``Spectral moment estimation by means of level crossings,''
  \emph{Biometrika}, vol.~61, no.~2, pp. 401--418, 1974.

\bibitem{wilson2017periodicity}
L.~R. Wilson and K.~I. Hopcraft, ``Periodicity in the autocorrelation function
  as a mechanism for regularly occurring zero crossings or extreme values of a
  gaussian process,'' \emph{Physical Review E}, vol.~96, no.~6, p. 062129,
  2017.

\bibitem{doob1942brownian}
J.~L. Doob, ``The brownian movement and stochastic equations,'' \emph{Annals of
  Mathematics}, vol.~43, no.~2, pp. 351--369, 1942.

\bibitem{west1982linear}
B.~J. West and V.~Seshadri, ``Linear systems with l{\'e}vy fluctuations,''
  \emph{Physica A: Statistical Mechanics and its Applications}, vol. 113, no.
  1-2, pp. 203--216, 1982.

\bibitem{chechkin2000linear}
A.~Chechkin and V.~Y. Gonchar, ``Linear relaxation processes governed by
  fractional symmetric kinetic equations,'' \emph{Journal of Experimental and
  Theoretical Physics}, vol.~91, pp. 635--651, 2000.

\bibitem{lin2011undamped}
N.~Lin and S.~Lototsky, ``Undamped harmonic oscillator driven by additive
  gaussian white noise: a statistical analysis,'' \emph{Communications on
  Stochastic Analysis}, vol.~5, no.~1, p.~13, 2011.

\bibitem{dybiec2017underdamped}
B.~Dybiec, E.~Gudowska-Nowak, and I.~M. Sokolov, ``Underdamped stochastic
  harmonic oscillator driven by l{\'e}vy noise,'' \emph{Physical Review E},
  vol.~96, no.~4, p. 042118, 2017.

\bibitem{masoliver2023counting}
J.~Masoliver and M.~Palassini, ``Counting of level crossings for inertial
  random processes: Generalization of the rice formula,'' \emph{Physical Review
  E}, vol. 107, no.~2, p. 024111, 2023.

\bibitem{masoliver2025level}
J.~Masoliver, ``Level-crossing counting for generalized langevin equations,''
  \emph{Physical Review E}, vol. 111, no.~1, p. 014113, 2025.

\bibitem{rawat2024element}
S.~Rawat and S.~Martiniani, ``Element-wise and recursive solutions for the
  power spectral density of biological stochastic dynamical systems at fixed
  points,'' \emph{Physical Review Research}, vol.~6, no.~4, p. 043179, 2024.

\bibitem{moreno2002response}
R.~Moreno, J.~de~La~Rocha, A.~Renart, and N.~Parga, ``Response of spiking
  neurons to correlated inputs,'' \emph{Physical Review Letters}, vol.~89,
  no.~28, p. 288101, 2002.

\bibitem{williams2006gaussian}
C.~K. Williams and C.~E. Rasmussen, \emph{Gaussian processes for machine
  learning}.\hskip 1em plus 0.5em minus 0.4em\relax MIT press Cambridge, MA,
  2006, vol.~2, no.~3.

\bibitem{assaf2023asymptotic}
E.~Assaf, J.~Buckley, and N.~Feldheim, ``An asymptotic formula for the variance
  of the number of zeroes of a stationary gaussian process,'' \emph{Probability
  Theory and Related Fields}, vol. 187, no.~3, pp. 999--1036, 2023.

\end{thebibliography}

\clearpage
\onecolumn

\appendices
\numberwithin{equation}{section}
\allowdisplaybreaks

\section{Theory for the Upcrossing Process} 
\label{section:known_theory}
We assume that all the properties of the autocorrelation function $r(t)$ described in Section~\ref{section:description_of_problem} are followed. The upcrossing counting process $\Nup$ can be written as the integral of the underlying generator stochastic process $X_t$,
\begin{equation}
    \Nup = \int_0^T \delta\left(X_{t_1} - u\right) \, \mathrm{H}\left(\dot{X}_{t_1}\right) \dd X_{t_1}
\end{equation}
where $\dot{X}_t$ is the derivative process, $\delta(x)$ is the Dirac delta function, and $\mathrm{H}(x)$ is the Heaviside step function. The condition $\delta\left(X_{t_1} - u\right)$ ensures that only the crossings of level $u$ are counted, and the condition $\mathrm{H}(\dot{X}_{t_1})$ ensures that only upcrossings are counted. Writing the expression above as an integral in time, we get,
\begin{equation}
    \Nup = \int_0^T \delta\left(X_{t_1} - u\right) \, \mathrm{H}\left(\dot{X}_{t_1}\right) \dot{X}_{t_1} \, \dd t_1 \label{eq:defn_Nu_delta}
\end{equation} 

\subsection{Mean Number of Upcrossings} 
\label{section:appendix_upcrossing_mean}
We take the expectation of $\Nup$ to get the mean number of upcrossings of $X_t$ in time $T$,
\begin{equation}
\begin{split}
    \E\left[\Nup\right] &= \E\left[ \int_0^T \delta\left(X_{t_1} - u\right) \, \mathrm{H}\left(\dot{X}_{t_1}\right) \dot{X}_{t_1} \, \dd t_1 \right]  \\
    &= \int_0^T \E\left[\delta\left(X_{t_1} - u\right) \, \mathrm{H}\left(\dot{X}_{t_1}\right) \dot{X}_{t_1} \right] \, \dd t_1 
\end{split} \label{eq:rice_formula_expectation}
\end{equation}
We define the random vector $\mathbf{P} = [X_{t_1}, \, \dot{X}_{t_1}]^\top$ with probability density function (p.d.f.) given by $f_{\mathbf{P}}(x,\,y)$. Since $X_{t}$ is a stationary process, the distribution of $\mathbf{P}$ does not depend on $t_1$, implying that we can write $\mathbf{P} = [X_{0}, \, \dot{X}_{0}]^\top$. Therefore, $f_{\mathbf{P}}(u,\,y)$ is independent of time $t_1$ and we can rewrite 
Eq.~\eqref{eq:rice_formula_expectation} as,
\begin{equation}
\begin{split}
    \E\left[\Nup\right] &= \int_0^T \E\left[\delta\left(X_{t_1} - u\right) \, \mathrm{H}\left(\dot{X}_{t_1}\right) \dot{X}_{t_1} \right] \, \dd t_1 \\
    &= \int_0^T \int_{-\infty}^\infty \int_{-\infty}^\infty \delta\left(x - u\right) \, \mathrm{H}\left(y \right) y\, f_{\mathbf{P}}(x,\,y) \, \dd x \, \dd y \, \dd t_1 \\
    &= \int_0^T \int_{-\infty}^\infty  \mathrm{H}\left(y \right) y\, f_{\mathbf{P}}(u,\,y) \, \dd y \, \dd t_1 \\
    &= \int_0^T \int_{0}^\infty y\, f_{\mathbf{P}}(u,\,y) \, \dd y \, \dd t_1\, .
\end{split} \label{eq:rice_formula_defn_double_integral}
\end{equation}
Since $f_{\mathbf{P}}(u,\,y)$ does not depend on time, $t_1$, we have,
\begin{equation}
    \E\left[\Nup\right] = T \int\limits_{0}^\infty y\, f_{\mathbf{P}}(u,\,y) \, \dd y  \label{eq:rice_formula_boxed}
\end{equation}

If we further assume that $X_{t}$ is zero-mean and Gaussian distributed, this implies that $\mathbf{P}$ is a zero-mean multivariate Gaussian stationary process with the correlation matrix defined as,

\begin{equation}
\renewcommand{\arraystretch}{1.5}
    \boldsymbol{\Sigma}_{\mathbf{P}} = \begin{bmatrix}
                \E[X_{0} X_{0}] & \E[X_{0} \dot{X}_{0}] \\
                \E[\dot{X}_{0} X_{0}] & \E[\dot{X}_{0} \dot{X}_{0}]
                \end{bmatrix} = \begin{bmatrix}
                r(0) & 0 \\
                0 & -\rpp(0)
                \end{bmatrix} 
\end{equation}
We have calculated the entries of this matrix as follows:
\begin{itemize}
    \item $\E\left[X_{0} X_{0}\right] = r(0)$ by definition.
    \item $\E[X_{0} \dot{X}_{0}] = \E[X_{t_1} \dot{X}_{t_2}]_{t_2=t_1}$. Taking the derivative outside of the expectation, we get, \\ $\E[X_{0} \dot{X}_{0}] = \frac{\partial \E[X_{t_1} X_{t_2}]}{\partial t_2}\bigr|_{t_2=t_1} = \frac{\partial r(t_2-t_1)}{\partial t_2}\bigr|_{t_2=t_1} = r^\prime(t_2-t_1)|_{t_2=t_1}=r^\prime(0)=0$.
    \item $\E\left[\dot{X}_{0} X_{0}\right] = \E[X_{0} \dot{X}_{0}] = 0$.
    \item $\E[\dot{X}_{0} \dot{X}_{0}] = \E[\dot{X}_{t_1} \dot{X}_{t_2}]_{t_2=t_1}$. Taking the derivatives outside of the expectation, we get, \\ $\E[\dot{X}_{0} \dot{X}_{0}] = \frac{\partial^2 \E[X_{t_1} X_{t_2}]}{\partial t_1\partial t_2}\bigr|_{t_2=t_1} = \frac{\partial^2 r(t_2-t_1)}{\partial t_1\partial t_2}\bigr|_{t_2=t_1} = -r^{\prime\prime}(t_2-t_1)|_{t_2=t_1} = -r^{\prime\prime}(0)$.
\end{itemize}

Using this covariance matrix, we can define the p.d.f. $f_{\mathbf{P}}(x,y)$ as,
\begin{equation}
\begin{split}
    f_{\mathbf{P}}(x,y) &= \frac{1}{2\pi \sqrt{\left|\boldsymbol{\Sigma}_{\mathbf{P}}\right|}}\exp\left(-\frac{1}{2}[x, y] \, \boldsymbol{\Sigma}_{\mathbf{P}}^{-1} \, [x, y]^\top\right) \\
    &= \frac{1}{2\pi \sqrt{r(0) \, (-\rpp(0))}}\exp\left(-\frac{1}{2}\left(\frac{x^2}{r(0)} + \frac{y^2}{(-\rpp(0))}\right)\right)  \,.
\end{split} 
\end{equation}
Substituting the expression for $f_{\mathbf{P}}(x,\,y)$ into the integral in Eq.~\eqref{eq:rice_formula_boxed} yields
\begin{equation}
    \E\left[\Nup\right] = T \, \frac{1}{2\pi}\sqrt{\frac{-r^{\prime\prime}(0)}{r(0)}} \, e^{-\frac{u^2}{2r(0)}}.
\end{equation}

\subsection{Variance of Upcrossings} \label{section:variance_derivation1}
The variance of $\Nup$ is given by,
\begin{equation}
    \Var\left[\Nup\right] = \E\left[\Nup^2\right] - \E\left[\Nup\right]^2
\end{equation}
$\E\left[\Nup\right]$ is known from the previous section, therefore, we focus on $\E\left[\Nup^2\right]$. Consider the counting process $\Nup$ as the integral over time of the differential process, i.e., 
\begin{equation}
    \Nup = \int_0^T \dd N^\uparrow_u(t_1)
\end{equation}
Therefore, we can write $\E\left[\Nup^2\right]$ as,
\begin{equation}
\begin{split}
    \E\left[\Nup^2\right] &= \E\left[\int_0^T \dd {N^\uparrow_u}(t_1) \int_0^T \dd {N^\uparrow_u}(t_2)\right] \\
    &= \E\left[\int_0^T \int_0^T \dd {N^\uparrow_u}(t_1)  \, \dd {N^\uparrow_u}(t_2)\right] \\
    &= \E\left[\int_0^T \int_0^T \mathbf{1}_{t_1=t_2} \, \dd  {N^\uparrow_u}(t_1) \, \dd {N^\uparrow_u}(t_2) + \int_0^T \int_0^T \mathbf{1}_{t_1\neq t_2}  \,\dd  {N^\uparrow_u}(t_1) \, \dd {N^\uparrow_u}(t_2)\right] \\
    &= \E\left[\int_0^T \dd  {N^\uparrow_u}(t_1)^2 + \int_0^T \int_0^T \mathbf{1}_{t_1\neq t_2}  \, \dd {N^\uparrow_u}(t_1)  \, \dd {N^\uparrow_u}(t_2)\right]
\end{split}
\end{equation}
Since there can only be zero or one crossing at any given time, $\dd  {N^\uparrow_u}(t_1)$ is either $0$ or $1$, so $\dd  {N^\uparrow_u}(t_1)^2 = \dd  {N^\uparrow_u}(t_1)$. Substituting this into the expression above, we get,
\begin{equation}
\begin{split}
    \E\left[\Nup^2\right] &= \E\left[\int_0^T \dd  {N^\uparrow_u}(t_1)\right] + \E\left[\int_0^T \int_0^T \mathbf{1}_{t_1\neq t_2}  \,\dd {N^\uparrow_u}(t_1)  \, \dd {N^\uparrow_u}(t_2)\right] \\
    &= \E\left[\Nup\right] + \underbrace{\E\left[\int_0^T \int_0^T \mathbf{1}_{t_1\neq t_2}  \,\dd {N^\uparrow_u}(t_1)  \, \dd {N^\uparrow_u}(t_2)\right]}_{\text{\normalsize \(M_2\)}}
\end{split}
\end{equation}
Hence, $\E\left[\Nup^2\right] = \E\left[\Nup\right] + M_2$, where $M_2$ is the second factorial moment of $\Nup$. Now, we use the definition of $\Nup$ from Eq.~\eqref{eq:defn_Nu_delta} to get the following expression for $M_2$,
\begin{equation}
    M_2 = \int_0^T \int_0^T \E\left[\mathbf{1}_{t_1\neq t_2} \, \mathrm{H}\left(\dot{X}_{t_1}\right) \mathrm{H}\left(\dot{X}_{t_2}\right)
    \delta\left(X_{t_1} - u\right) \, \delta\left(X_{t_2} - u\right) \, \dot{X}_{t_1} \, \dot{X}_{t_2} \right] \dd t_1 \, \dd t_2 \label{eq:I_rice_variance}
\end{equation}
We define the random vector $\mathbf{Q} = [X_{t_1}, \, X_{t_2}, \, \dot{X}_{t_1}, \, \dot{X}_{t_2}]^\top$ with p.d.f. given by $f_{\mathbf{Q}}(x_1, \,x_2, \,y_1, \,y_2; \,t_1, \,t_2)$. Since $X_{t}$ is a stationary process, $\mathbf{Q}$ only depends on $t_1$ and $t_2$ through their difference $\left(t_2-t_1\right)$ or $f_{\mathbf{Q}}(x_1, \,x_2, \,y_1, \,y_2;\,t_2 - t_1)$. Further, since $X_{t}$ is zero-mean and Gaussian distributed, $\mathbf{Q}$ is a zero-mean multivariate Gaussian stationary process with the correlation matrix defined as,
\begin{equation}
\renewcommand{\arraystretch}{1.5}
    \boldsymbol{\Sigma}_{\mathbf{Q}}(t) = \begin{bmatrix}
                \E[X_{t_1} X_{t_1} ] & \E[X_{t_1} X_{t_2} ] & \E[X_{t_1} \dot{X}_{t_1} ] & \E[X_{t_1} \dot{X}_{t_2} ] \\
                \E[X_{t_2} X_{t_1} ] & \E[X_{t_2} X_{t_2} ] & \E[X_{t_2} \dot{X}_{t_1} ] & \E[X_{t_2} \dot{X}_{t_2} ] \\
                \E[\dot{X}_{t_1} X_{t_1} ] & \E[\dot{X}_{t_1} X_{t_2} ] & \E[\dot{X}_{t_1} \dot{X}_{t_1} ] & \E[\dot{X}_{t_1} \dot{X}_{t_2} ] \\
                \E[\dot{X}_{t_2} X_{t_1} ] & \E[\dot{X}_{t_2} X_{t_2} ] & \E[\dot{X}_{t_2} \dot{X}_{t_1} ] & \E[\dot{X}_{t_2} \dot{X}_{t_2} ] \\
                \end{bmatrix} = 
                \begin{bmatrix}
                r(0) & r(t) & 0 & \rp(t) \\
                r(t) & r(0) & -\rp(t) & 0 \\
                0 & -\rp(t) & -\rpp(0) & -\rpp(t) \\
                \rp(t) & 0 & -\rpp(t) & -\rpp(0) \\
                \end{bmatrix}, \label{eq:correlation_mat_Q}
\end{equation}
where $t=t_2-t_1$. The following properties emerge from the fact that $r(t)$ is a symmetric function about the origin: $r(-t) = r(t)$, $\rp(-t) = - \rp(t)$, and $\rpp(-t) = \rpp(t)$. We used these properties to simplify the covariance matrix above. Using the p.d.f., we can write $M_2$ from Eq.~\eqref{eq:I_rice_variance} as,
\begin{equation}
\begin{split}
    M_2 = \int_0^T \int_0^T &\E\left[\mathbf{1}_{t_1\neq t_2} \, \mathrm{H}\left(\dot{X}_{t_1}\right) \mathrm{H}\left(\dot{X}_{t_2}\right) \delta\left(X_{t_1} - u\right) \, \delta\left(X_{t_2} - u\right) \, \dot{X}_{t_1} \, \dot{X}_{t_2} \right] \dd t_1 \, \dd t_2 \\
    = \int_0^T \int_0^T &\int_{-\infty}^\infty \int_{-\infty}^\infty \int_{-\infty}^\infty \int_{-\infty}^\infty  \biggl(\mathbf{1}_{t_1\neq t_2} \, \mathrm{H}\left(y_1\right) \mathrm{H}\left(y_2\right) \delta\left(x_1 - u\right) \, \delta\left(x_2 - u\right) \, \\& \qquad y_1 y_2 \, f_{\mathbf{Q}}(x_1, \,x_2, \,y_1, \,y_2; \, t_2 - t_1) \biggr)\, \dd x_1 \, \dd x_2 \, \dd y_1 \, \dd y_2 \, \dd t_1 \, \dd t_2 \\
    = \int_0^T \int_0^T &\int_{-\infty}^\infty \int_{-\infty}^\infty \mathbf{1}_{t_1\neq t_2} \, \mathrm{H}\left(y_1\right) \mathrm{H}\left(y_2\right) y_1 y_2 \, f_{\mathbf{Q}}(u, \,u, \,y_1, \,y_2; \, t_2-t_1) \, \dd y_1 \, \dd y_2 \, \dd t_1 \, \dd t_2 \\
    = \int_0^T \int_0^T &\int_{0}^\infty \int_{0}^\infty y_1 y_2 \, f_{\mathbf{Q}}(u, \,u, \,y_1, \,y_2; \, t_2 - t_1) \, \dd y_1 \, \dd y_2 \, \dd t_1 \, \dd t_2
\end{split}
\end{equation}
\begin{figure}[t]
    \centering
    \includegraphics[width=0.8\linewidth]{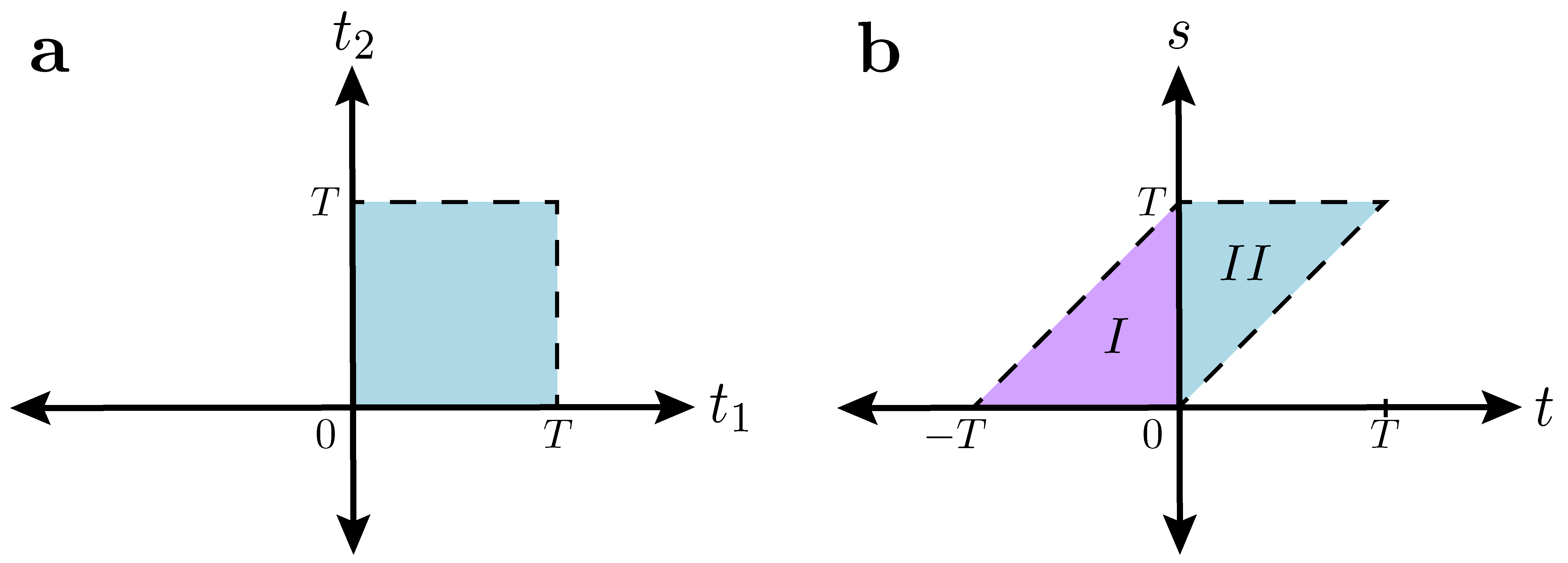}
    \caption[Variable transform visualized.]{\textbf{Variable transform visualized.} (a) The area under which the integral is performed for $t_1$ and $t_2$. (b) The area under which the integral should be performed upon variable transformation given by $t=t_2 -t_1$ and $s = t_2$. I and II represent further partitioning of this region.}
    \label{fig:integral_variance_sup}
\end{figure}
Note that we neglected the condition $\mathbf{1}_{t_1\neq t_2}$ in the integrand above because the only time-dependent term is the p.d.f., and it has no singularities; therefore, its contribution to the final integral is infinitesimally small. Consider $\E\left[\Nup\right]^2$, using Eq.~\eqref{eq:rice_formula_defn_double_integral}, we get,
\begin{equation}
\begin{split}
    \E\left[\Nup\right]^2 &= \left(\int_0^T \int_{0}^\infty y_1\, f_{\mathbf{P}}(u,\,y_1) \, \dd y_1 \, \dd t_1\right) \, \left(\int_0^T \int_{0}^\infty y_2\, f_{\mathbf{P}}(u,\,y_2) \, \dd y_2 \, \dd t_2\right) \\
    &= \int_0^T \int_0^T \int_{0}^\infty \int_{0}^\infty y_1 y_2 \, f_{\mathbf{P}}(u,\,y_1) \, f_{\mathbf{P}}(u,\,y_2) \, \dd y_1 \, \dd y_2 \, \dd t_1 \, \dd t_2
\end{split}
\end{equation}
The variance of $\Nup$ is given by,
\begin{equation}
\begin{split}
    \Var\left[\Nup\right] =& \E\left[\Nup^2\right] - \E\left[\Nup\right]^2 \\
    =& \E\left[\Nup\right] + M_2 - \E\left[\Nup\right]^2 \\
    =& \E\left[\Nup\right] + \int_0^T \int_0^T \int_{0}^\infty \int_{0}^\infty y_1 y_2 \,\bigl( f_{\mathbf{Q}}(u, \,u, \,y_1, \,y_2; \, t_2 - t_1) - f_{\mathbf{P}}(u,\,y_1) \, f_{\mathbf{P}}(u,\,y_2)\bigr)\, \dd y_1 \, \dd y_2 \, \dd t_1 \, \dd t_2
\end{split}
\raisetag{45pt}
\end{equation}
We define $I^\prime$ as follows:
\begin{equation}
    I^\prime = \int_0^T \int_0^T \int_{0}^\infty \int_{0}^\infty y_1 y_2 \left( f_{\mathbf{Q}}(u, u, y_1, y_2;  t_2 - t_1) - f_{\mathbf{P}}(u, y_1) \, f_{\mathbf{P}}(u, y_2)\right)\, \dd y_1 \, \dd y_2 \, \dd t_1 \, \dd t_2.
\end{equation}

We make the following variable change in $I^\prime$: $t=t_2-t_1$ and $s = t_2$. The integral over $t_1$ and $t_2$ is calculated on the positive quadrant (Fig.~\ref{fig:integral_variance_sup}a), and this transforms to the area in Fig.~\ref{fig:integral_variance_sup}b when done in the new coordinates, $t$ and $s$. We write the new integral separately over the regions I and II (Fig.~\ref{fig:integral_variance_sup}b),
\begin{equation}
\begin{split}
    I^\prime =& \int_{-T}^0 \int_0^{t+T} \int_{0}^\infty \int_{0}^\infty y_1 y_2 \,\left( f_{\mathbf{Q}}(u, \,u, \,y_1, \,y_2; \, t) - f_{\mathbf{P}}(u,\,y_1) \, f_{\mathbf{P}}(u,\,y_2)\right)\, \dd y_1 \, \dd y_2 \, \dd s \, \dd t \\
    &+ \int_{0}^T \int_t^{T} \int_{0}^\infty \int_{0}^\infty y_1 y_2 \,\left( f_{\mathbf{Q}}(u, \,u, \,y_1, \,y_2; \, t) - f_{\mathbf{P}}(u,\,y_1) \, f_{\mathbf{P}}(u,\,y_2)\right)\, \dd y_1 \, \dd y_2 \, \dd s \, \dd t
\end{split}
\end{equation}
Transforming the sign of $t$ in the first integral, we get,
\begin{equation}
\begin{split}
    I^\prime =& \int_{0}^T \int_0^{-t+T} \int_{0}^\infty \int_{0}^\infty y_1 y_2 \,\left( f_{\mathbf{Q}}(u, \,u, \,y_1, \,y_2; \, -t) - f_{\mathbf{P}}(u,\,y_1) \, f_{\mathbf{P}}(u,\,y_2)\right)\, \dd y_1 \, \dd y_2 \, \dd s \, \dd t \\
    &+ \int_{0}^T \int_t^{T} \int_{0}^\infty \int_{0}^\infty y_1 y_2 \,\left( f_{\mathbf{Q}}(u, \,u, \,y_1, \,y_2; \, t) - f_{\mathbf{P}}(u,\,y_1) \, f_{\mathbf{P}}(u,\,y_2)\right)\, \dd y_1 \, \dd y_2 \, \dd s \, \dd t
\end{split}
\end{equation}
Since the integrand does not depend on $s$, we integrate in $s$ and get,
\begin{equation}
\begin{split}
    I^\prime =& \int_{0}^T \int_{0}^\infty \int_{0}^\infty (T-t) \,y_1 y_2 \,\left( f_{\mathbf{Q}}(u, \,u, \,y_1, \,y_2; \, -t) - f_{\mathbf{P}}(u,\,y_1) \, f_{\mathbf{P}}(u,\,y_2)\right)\, \dd y_1 \, \dd y_2 \, \dd t \\
    &+ \int_{0}^T \int_{0}^\infty \int_{0}^\infty (T-t) \, y_1 y_2 \,\left( f_{\mathbf{Q}}(u, \,u, \,y_1, \,y_2; \, t) - f_{\mathbf{P}}(u,\,y_1) \, f_{\mathbf{P}}(u,\,y_2)\right)\, \dd y_1 \, \dd y_2 \, \dd t 
\end{split} \label{eq:sign_minus_t1}
\end{equation}
The two integrals are taken over the same volume and have the same integrands except for the sign of $t$ in the p.d.f. Consider the p.d.f. in the first integrand $f_{\mathbf{Q}}(u, \,u, \,y_1, \,y_2; \, -t)$. Since $\mathbf{Q}$ is a multivariate Gaussian, we have,
\begin{equation}
\begin{split}
    f_{\mathbf{Q}}(u, \,u, \,y_1, \,y_2; \, -t) = \frac{1}{4\pi^2\sqrt{
    \left|\boldsymbol{\Sigma}_{\mathbf{Q}}(-t)\right|}} \exp\left(- \frac{1}{2}[u,u,y_1,y_2]\, {\boldsymbol{\Sigma}_{\mathbf{Q}}(-t)}^{-1}\, [u,u,y_1,y_2]^\top\right)
\end{split}
\end{equation}
Using the definition of $\boldsymbol{\Sigma}_{\mathbf{Q}}(-t)$ in Eq.~\eqref{eq:correlation_mat_Q} and the properties of $r(t)$, $\rp(t)$ and $\rpp(t)$, we have,
\begin{equation}
\renewcommand{\arraystretch}{1.2}
    \boldsymbol{\Sigma}_{\mathbf{Q}}(-t) = \begin{bmatrix}
                r(0) & r(t) & 0 & -\rp(t) \\
                r(t) & r(0) & \rp(t) & 0 \\
                0 & \rp(t) & -\rpp(0) & -\rpp(t) \\
                -\rp(t) & 0 & -\rpp(t) & -\rpp(0) \\
                \end{bmatrix}.
\end{equation}
Notice that $\boldsymbol{\Sigma}_{\mathbf{Q}}(-t)$ is related to $\boldsymbol{\Sigma}_{\mathbf{Q}}(t)$ by just a change of sign on the antidiagonal. It can be easily verified that $\left|\boldsymbol{\Sigma}_{\mathbf{Q}}(-t)\right| = \left|\boldsymbol{\Sigma}_{\mathbf{Q}}(t)\right|$ and $f_{\mathbf{Q}}(u, \,u, \,y_1, \,y_2; \, -t) = f_{\mathbf{Q}}(u, \,u, \,y_2, \,y_1; \, t)$. Therefore, Eq.~\eqref{eq:sign_minus_t1} becomes,
\begin{equation}
\begin{split}
    I^\prime =& \int_{0}^T \int_{0}^\infty \int_{0}^\infty (T-t) \,y_1 y_2 \,\left( f_{\mathbf{Q}}(u, \,u, \,y_2, \,y_1; \, t) - f_{\mathbf{P}}(u,\,y_1) \, f_{\mathbf{P}}(u,\,y_2)\right)\, \dd y_1 \, \dd y_2 \, \dd t \\
    &+ \int_{0}^T \int_{0}^\infty \int_{0}^\infty (T-t) \, y_1 y_2 \,\left( f_{\mathbf{Q}}(u, \,u, \,y_1, \,y_2; \, t) - f_{\mathbf{P}}(u,\,y_1) \, f_{\mathbf{P}}(u,\,y_2)\right)\, \dd y_1 \, \dd y_2 \, \dd t 
\end{split} 
\end{equation}
In the first integral, we interchange the variables $y_1$ and $y_2$ to get,
\begin{equation}
\begin{split}
    \int_{0}^T &\int_{0}^\infty \int_{0}^\infty (T-t) \,y_1 y_2 \,\left( f_{\mathbf{Q}}(u, \,u, \,y_2, \,y_1; \, t) - f_{\mathbf{P}}(u,\,y_1) \, f_{\mathbf{P}}(u,\,y_2)\right)\, \dd y_1 \, \dd y_2 \, \dd t 
    \\&= \int_{0}^T \int_{0}^\infty \int_{0}^\infty (T-t) \, y_1 y_2 \,\left( f_{\mathbf{Q}}(u, \,u, \,y_1, \,y_2; \, t) - f_{\mathbf{P}}(u,\,y_1) \, f_{\mathbf{P}}(u,\,y_2)\right)\, \dd y_1 \, \dd y_2 \, \dd t 
\end{split} 
\end{equation}
Therefore, $I^\prime$ is given by,
\begin{equation}
    I^\prime = \, 2 \,T \int_{0}^T \int_{0}^\infty \int_{0}^\infty \left(1-\frac{t}{T}\right) \, y_1 y_2 \,\left( f_{\mathbf{Q}}(u, \,u, \,y_1, \,y_2; \, t) - f_{\mathbf{P}}(u,\,y_1) \, f_{\mathbf{P}}(u,\,y_2)\right)\, \dd y_1 \, \dd y_2 \, \dd t
\end{equation}
Therefore, the variance of $\Nup$ is given by,
\begin{equation}
\begin{aligned}
    \Var\left[\Nup\right] = \E\left[\Nup\right] + 2\,T \int\limits_{0}^T \int\limits_{0}^\infty \int\limits_{0}^\infty \left(1-\frac{t}{T}\right) y_1 y_2 \bigl( f_{\mathbf{Q}}(u, \,u, \,y_1, \,y_2; \, t) - f_{\mathbf{P}}(u,\,y_1) \, f_{\mathbf{P}}(u,\,y_2)\bigr)\, \dd y_1 \, \dd y_2 \, \dd t .
\end{aligned} \label{eq:variance_formula_triple_integral}
\end{equation}

\subsubsection{Central Limit Theorem for Variance}
We consider the infinite time limit for the variance per unit time, i.e., $\lim_{T \rightarrow \infty}\frac{\Var[\Nup]}{T}$. We can split the integral in Eq.~\eqref{eq:variance_formula_triple_integral} as follows,
\begin{equation}
\begin{split}
    \lim_{T \rightarrow \infty}\frac{\Var\left[\Nup\right]}{T} =& \lim_{T\rightarrow\infty} \frac{\E\left[\Nup\right]}{T} + 2 \int_{0}^\infty \int_{0}^\infty \int_{0}^\infty y_1 y_2 \bigl( f_{\mathbf{Q}}(u, \,u, \,y_1, \,y_2; \, t)  - f_{\mathbf{P}}(u,\,y_1) \, f_{\mathbf{P}}(u,\,y_2)\bigr)\dd y_1 \, \dd y_2 \, \dd t \\
    &- \lim_{T\rightarrow\infty}\frac{2}{T} \underbrace{\int_{0}^\infty \int_{0}^\infty \int_{0}^\infty t\, y_1 y_2 \,\left( f_{\mathbf{Q}}(u, \,u, \,y_1, \,y_2; \, t) - f_{\mathbf{P}}(u,\,y_1) \, f_{\mathbf{P}}(u,\,y_2)\right)\, \dd y_1 \, \dd y_2 \, \dd t}_{\text{\normalsize \(I^{\prime\prime}\)}}
\end{split}
\raisetag{30pt}
\end{equation}
Consider the second integral in the expression above: $2 \lim_{T\rightarrow\infty} I^{\prime\prime} / T$. If $I^{\prime\prime}$ is finite then $2 \lim_{T\rightarrow\infty} I^{\prime\prime} / T = 0$. In fact, if the following condition is satisfied, then $I^{\prime\prime}$ is finite \cite{piterbarg1996asymptotic}, pg. 60.
\begin{equation}
\begin{split}
    \int_0^\infty t \left(\left|r(t)\right| + \left|\rp(t)\right|+ \left|\rpp(t)\right|\right) \, \dd t < \infty\,.
\end{split} \label{eq:pieterbarg_condition}
\end{equation}
Assuming this condition is satisfied, the variance in the long-time limit is given by,
\begin{equation}
\begin{aligned}
    \lim_{T \rightarrow \infty}\frac{\Var\left[\Nup\right]}{T} = \lim_{T \rightarrow \infty}\frac{\E\left[\Nup\right]}{T} + 2 \int\limits_{0}^\infty \int\limits_{0}^\infty \int\limits_{0}^\infty y_1 y_2 \bigl( f_{\mathbf{Q}}(u, \,u, \,y_1, \,y_2; \, t) - f_{\mathbf{P}}(u,\,y_1) \, f_{\mathbf{P}}(u,\,y_2)\bigr) \dd y_1 \, \dd y_2 \, \dd t.
\end{aligned} \label{eq:variance_formula_triple_integral_CLT}
\end{equation}

\subsection{Fano Factor}
The Fano factor is defined as the ratio of variance and mean of $\Nup$ in the infinite time limit, 
\begin{equation}
    \FFup = \lim_{T \to \infty}\frac{\Var[\Nup]}{\E[\Nup]}
\end{equation}
Taking the formula for variance from Eq.~\eqref{eq:variance_formula_triple_integral_CLT}, we get,
\begin{equation}
\begin{split}
    \FFup &= \frac{\E\left[\Nup\right] + 2\, T\int_{0}^\infty \int_{0}^\infty \int_{0}^\infty y_1 y_2 \,\left( f_{\mathbf{Q}}(u, \,u, \,y_1, \,y_2; \, t) - f_{\mathbf{P}}(u,\,y_1) \, f_{\mathbf{P}}(u,\,y_2)\right)\, \dd y_1 \, \dd y_2 \, \dd t}{\E[\Nup]} \\
    &= 1 + \frac{2\,T\int_{0}^\infty \int_{0}^\infty \int_{0}^\infty y_1 y_2 \,\left( f_{\mathbf{Q}}(u, \,u, \,y_1, \,y_2; \, t) - f_{\mathbf{P}}(u,\,y_1) \, f_{\mathbf{P}}(u,\,y_2)\right)\, \dd y_1 \, \dd y_2 \, \dd t}{\E[\Nup]}
\end{split}
\raisetag{40pt}
\end{equation}
Substituting the mean formula from Eq.~\eqref{eq:rice_formula_boxed}, we get the final expression for the Fano factor,
\begin{equation}
    \FFup = 1 + \frac{2\int\limits_{0}^\infty \int\limits_{0}^\infty \int\limits_{0}^\infty y_1 y_2 \,\left( f_{\mathbf{Q}}(u, \,u, \,y_1, \,y_2; \, t) - f_{\mathbf{P}}(u,\,y_1) \, f_{\mathbf{P}}(u,\,y_2)\right)\, \dd y_1 \, \dd y_2 \, \dd t}{\int\limits_{0}^\infty y\, f_{\mathbf{P}}(u,\,y) \, \dd y}. \label{eq:fano_factor_upcrossing_1}
\end{equation}

\subsection{Proof of Theorem~\ref{thrm:main_theorem_upcrossing}} \label{section:proof_theorem_upcrossing}
The result for the mean can be found by using the new notation in Eq.~\eqref{eq:rice_formula_specific_main}. For deriving the expression for variance, we take the result derived for $\Var[\Nup]$, stated in Eq.~\eqref{eq:variance_formula_triple_integral_main}, and explicitly evaluate the integrals in $y_1$ and $y_2$ for an arbitrary level $u$. To the best of our knowledge, this has only been done for zero-level, i.e., $u=0$ \cite{steinberg1955short, leadbetter1965variance, pawula1968analysis, lindgren1974spectral, wilson2017periodicity}, but not for an arbitrary level $u$ due to the complexity of
the asymmetric Gaussian integral involved. We overcome this difficulty by explicitly solving the integral in terms
of known functions.

We denote the integral of interest by $\Iup$, defined as,
\begin{equation}
    \Iup = \int_{0}^\infty \int_{0}^\infty y_1 y_2 \,\bigl( f_{\mathbf{Q}}(u, \,u, \,y_1, \,y_2; \, t) - f_{\mathbf{P}}(u,\,y_1) \, f_{\mathbf{P}}(u,\,y_2)\bigr)\, \dd y_1 \, \dd y_2 \,. \label{eq:defn_of_I}
\end{equation}
Splitting the integral, we get,
\begin{equation}
    \Iup = \underbrace{\int_{0}^\infty \int_{0}^\infty y_1 y_2 \,f_{\mathbf{Q}}(u, \,u, \,y_1, \,y_2; \, t) \, \dd y_1 \, \dd y_2}_{\text{\normalsize \(I_1\)}} - \underbrace{\int_{0}^\infty \int_{0}^\infty y_1 y_2 \, f_{\mathbf{P}}(u,\,y_1) \, f_{\mathbf{P}}(u,\,y_2) \, \dd y_1 \, \dd y_2}_{\text{\normalsize \(I_2\)}} \,.
\label{eq:defn_of_I_split}
\end{equation}
To derive an explicit analytical expression for $\Iup$, we use the shorter notations for quantities depending on the correlation function $r(t)$: $r(t)\rightarrow r$, $r(0)\rightarrow r_0$, $\rp(t) \rightarrow p$, $\rp(0)\rightarrow p_0$, $-\rpp(t) \rightarrow q$, and $-\rpp(0)\rightarrow q_0$. Note that since $r(t)$ is a differentiable function with $r(0)$ as its maximum value (by definition), the graph is concave at $t=0$; therefore, $-\rpp(0)>0$ or $q_0>0$. In this new notation, we can write the covariance matrix for the vector $\mathbf{P} = [X_{t_1}, \, \dot{X}_{t_1}]^\top$, defined in Eq.~\eqref{eq:correlation_mat_P_main} as,
\begin{equation}
\renewcommand{\arraystretch}{0.75}
    \boldsymbol{\Sigma}_{\mathbf{P}} = \begin{bmatrix}
                r_0 & 0 \\
                0 & q_0
                \end{bmatrix}\,.
\end{equation}
Further, since $\mathbf{P}$ is a zero-mean Gaussian random vector, its p.d.f. is given by,
\begin{equation}
\begin{split}
    f_{\mathbf{P}}(x,y) &= \frac{1}{2\pi \sqrt{\left|\boldsymbol{\Sigma}_{\mathbf{P}}\right|}}\exp\left(-\frac{1}{2}[x, y] \, \boldsymbol{\Sigma}_{\mathbf{P}}^{-1} \, [x, y]^\top\right) \\
    &= \frac{1}{2\pi \sqrt{r_0 \, q_0}}\exp\left(-\frac{1}{2}\left(\frac{x^2}{r_0} + \frac{y^2}{q_0}\right)\right)  \,.
\end{split} \label{eq:f_P_proof}
\end{equation}
To calculate $I_2$ (Eq.~\eqref{eq:defn_of_I_split}), we substitute the expressions for $f_{\mathbf{P}}(u,\,y_1)$ and $f_{\mathbf{P}}(u,\,y_2)$ using the equation above:
\begin{equation}
\begin{split}
    I_2 &= \int_{0}^\infty \int_{0}^\infty y_1 y_2 \, f_{\mathbf{P}}(u,y_1) \, f_{\mathbf{P}}(u,y_2) \, \dd y_1 \, \dd y_2 \\
    &= \frac{e^{-u^2/r_0}}{4\pi^2 \, r_0 q_0}\int_{0}^\infty \int_{0}^\infty y_1 y_2  \exp\left(-\frac{y_1^2 + y_2^2}{2q_0}\right)  \dd y_1 \, \dd y_2 \\
    &= \frac{1}{4\pi^2 }\left(\frac{q_0}{r_0}\right)e^{-u^2/r_0} \,.
\end{split} \label{eq:final_expr_for_I2}
\end{equation}
Now we evaluate an explicit expression for $I_1$ (Eq.~\eqref{eq:defn_of_I_split}), given by,
\begin{equation}
    I_1 = \int_{0}^\infty \int_{0}^\infty y_1 y_2 \,f_{\mathbf{Q}}(u, \,u, \,y_1, \,y_2; \, t) \, \dd y_1 \, \dd y_2 \,.
\end{equation}
We first make the following transformation of variables in the integral:
\begin{equation}
    z_1 = \frac{y_2 - y_1}{\sqrt{2}} \quad \text{and} \quad z_2 = \frac{y_2 + y_1}{\sqrt{2}}. \label{eq:transformation_defined}
\end{equation}
We get the following relations between the old and new variables:
\begin{equation}
\begin{split}
    y_1^2 + y_2^2 &= z_1^2 + z_2^2 \,, \\
    y_2 - y_1 &= \sqrt{2} \, z_1 \,, \\
    y_1 y_2 &= \frac{z_2^2 - z_1^2}{2} \,.
\end{split} \label{eq:transformation_relations}
\end{equation}
The absolute value of the Jacobian of this transformation is $1$ and the area of integration changes as follows,
\begin{equation}
    I_1 = \frac{1}{2}\int_{-\infty}^\infty \int_{|z_1|}^\infty \left(z_2^2 - z_1^2\right) f_{\mathbf{Q}}(u, u, z_1, z_2; t) \, \dd z_2 \,\dd z_1 \,. \label{eq:I1_defn_z1_z2}
\end{equation}
Now, we find how the p.d.f. changes due to this transformation of variables, i.e., we find the expression for $f_{\mathbf{Q}}(u, \,u, \,z_1, \,z_2; \, t)$. First, in the new notation, the covariance matrix for the vector $\mathbf{Q} = [X_{t_1}, \, X_{t_2}, \, \dot{X}_{t_1}, \, \dot{X}_{t_2}]^\top$, defined in Eq.~\eqref{eq:correlation_mat_Q_main} becomes,
\begin{equation}
\renewcommand{\arraystretch}{1.2}
    \boldsymbol{\Sigma}_{\mathbf{Q}}(t) = \begin{bmatrix}
                r_0 & r & 0 & p \\
                r & r_0 & -p & 0 \\
                0 & -p & q_0 & q \\
                p & 0 & q & q_0 \\
                \end{bmatrix}, \label{eq:correlation_mat_Q_main_new_notation}
\end{equation}
Further, since $\mathbf{Q}$ is a zero-mean Gaussian random vector, its p.d.f. is given by,
\begin{equation}
\begin{split}
    f_{\mathbf{Q}}(x_1, x_2, y_1, y_2; t) = \frac{1}{4\pi^2\sqrt{
    \left|\boldsymbol{\Sigma}_{\mathbf{Q}}(t)\right|}} \,\exp\left(S\right),
\end{split}
\end{equation}
where
\begin{equation}
    S = - \frac{1}{2}[x_1,x_2,y_1,y_2]\, {\boldsymbol{\Sigma}_{\mathbf{Q}}(t)}^{-1}\, [x_1,x_2,y_1,y_2]^\top \,.
\end{equation}
We denote the determinant $\left|\boldsymbol{\Sigma}_{\mathbf{Q}}(t)\right|$ by $\Delta$. Calculating the determinant of the matrix above, we find,
\begin{equation}
    \Delta = \bigl(p^2 + (q-q_0) (r+r_0)\bigr)\, \bigl(p^2 + (q+q_0) (r-r_0)\bigr) \,.\label{eq:determinant_expr}
\end{equation}
Since we need the expression for $f_{\mathbf{Q}}(u, \,u, \,y_1, \,y_2; \, t)$ to find $I_1$, we focus on the expression in the exponential ($S$), evaluated at $\left(u, \,u, \, y_1,\,y_2\right)$. Finding the inverse of the matrix ${\boldsymbol{\Sigma}_{\mathbf{Q}}(t)}^{-1}$, and upon simplification, we get,
\begin{equation}
    S = -\frac{1}{2\Delta} \bigl(c_0 \, u^2  + c_1  \left(y_1^2 + y_2^2\right) + c_2 \, u \left(y_2-y_1\right) + c_3 \, y_1y_2 \bigr)\,,
\end{equation}
where $c_0$, $c_1$, $c_2$ and $c_3$ are given by,
\begin{equation}
\begin{split}
    c_0 &= 2 \left(q - q_0\right) \left(p^2 + \left(q + q_0\right) \left(r - r_0\right)\right) , \\
    c_1 &=  q_0 r_0^2 - q_0 r^2 - r_0 p^2 , \\
    c_2 &= 2\, p \left(p^2 + \left(q + q_0\right) \left(r - r_0\right)\right) , \\
    c_3 &= 2 \left(p^2 r+q r^2-q r_0^2\right) .
\end{split}
\end{equation}
Transforming $S$ into new variables defined in Eq.~\eqref{eq:transformation_defined}, using the relations stated in Eq.~\eqref{eq:transformation_relations}, we get,
\begin{equation}
\begin{split}
    S =& -\frac{1}{2\Delta} \biggl(c_0 \, u^2  + \left(c_1-\frac{c_3}{2}\right) z_1^2 +  \sqrt{2} \,c_2 \, u \,z_1
     + \left(c_1 + \frac{c_3}{2}\right) z_2^2  \biggr) \\
    =& -\frac{1}{2\Delta} \left(\tilde{c}_0 \, u^2  + \tilde{c}_1\, z_1^2 +  \tilde{c}_2\, u \,z_1 + \tilde{c}_3 \,z_2^2  \right).
\end{split}
\end{equation}
where $\tilde{c}_0$, $\tilde{c}_1$, $\tilde{c}_2$ and $\tilde{c}_3$ are given by,
\begin{equation}
\begin{split}
    \tilde{c}_0 &= 2 \left(q - q_0\right) \left(p^2 + \left(q + q_0\right) \left(r - r_0\right)\right), \\
    \tilde{c}_1 &= - \left(r+r_0\right) \left(p^2 + \left(q + q_0\right) \left(r - r_0\right)\right),\\
    \tilde{c}_2 &=  2\sqrt{2} \,p \left(p^2 + \left(q + q_0\right) \left(r - r_0\right)\right),\\
    \tilde{c}_3 &= - \left(r_0 - r\right) \left(p^2 + \left(q - q_0\right) \left(r + r_0\right)\right) .
\end{split}
\end{equation}
Now we complete the square in $z_1$, which gives us,
\begin{equation}
\begin{split}
S &= -\frac{1}{2\Delta} \left( \left(\tilde{c}_0 - \frac{\tilde{c}_2^2}{4\tilde{c}_1}\right) u^2 + \tilde{c}_1 \left(z_1 + \frac{\tilde{c}_2}{2\tilde{c}_1} u \right)^2  + \tilde{c}_3\,z_2^2 \right) \\
&= - \underbrace{\frac{1}{2\Delta} \left(\tilde{c}_0 - \frac{\tilde{c}_2^2}{4\tilde{c}_1}\right)}_{\text{\normalsize \(\delta\)}} u^2 - \underbrace{\frac{\tilde{c}_1}{2\Delta}}_{\text{\normalsize \(\alpha\)}}  \Biggl(z_1 -  \underbrace{\frac{-\tilde{c}_2}{2\tilde{c}_1} u}_{\text{\normalsize \(\gamma\)}} \Biggr)^2  -  \underbrace{\frac{\tilde{c}_3}{2\Delta}}_{\text{\normalsize \(\beta\)}} \,z_2^2 \,.
\end{split}
\end{equation}
In this new notation, $S$ is given simply by the expression,
\begin{equation}
    S = -\delta \, u^2 - \alpha \left(z_1 - \gamma \right)^2 - \beta \, z_2^2 \,,
\end{equation}
where $\alpha$, $\beta$, $\gamma$, and $\delta$ are given by the following expressions, found using the definition above and the definition of $\Delta$ from Eq.~\eqref{eq:determinant_expr},
\begin{equation}
\begin{aligned}
\alpha &=-\frac{r+r_0}{2\left(p^2+(q-q_0)(r+r_0)\right)},\\
\beta &= -\frac{r_0-r}{2\left(p^2+(q+q_0)(r-r_0)\right)},\\
\gamma &=\frac{\sqrt{2}\,p}{r+r_0}\,u,\\
\delta &= \frac{1}{r+r_0}\,.
\end{aligned}
\label{eq:defn_alpha_beta_gamma_delta}
\end{equation}
Now, we determine the signs of these variables for $t \geq 0$.
\begin{itemize}
    \item $\alpha$ and $\beta$: Strictly positive due to Theorem~\ref{thrm:positivity_of_correlation_term}.
    \item $\gamma:$ The sign of $\gamma$ depends on the signs of $u$ and $p$; therefore, $\gamma$ can be negative or nonnegative.
    \item $\delta:$ Since $r_0 \geq |r| \; \forall \; t \geq 0$ and $r_0 > 0$ by definition, $\implies \; r+r_0 \geq 0 \; \forall \; t \geq 0$. Therefore, $\delta$ is strictly positive for all $t\geq 0$.
\end{itemize}
To summarize, $\alpha > 0$, $\beta>0$, $\delta>0$, and $\gamma \in \R$ for all $t \geq 0$. Now, we express the determinant in terms of $r$, $\alpha$, and $\beta$. It can be verified using the expressions for $\alpha$ and $\beta$ in Eq.~\eqref{eq:defn_alpha_beta_gamma_delta} and the expression for the determinant in Eq.~\eqref{eq:determinant_expr} that,
\begin{equation}
    \Delta = \frac{r_0^2 - r^2}{4 \alpha \beta} \,.
\end{equation}
Therefore, we get the following expression for $f_{\mathbf{Q}}(u, \,u, \,z_1, \,z_2; \, t)$:
\begin{equation}
\begin{split}
    f_{\mathbf{Q}}(u, u, z_1, z_2;  t)  &= \frac{1}{4\pi^2\sqrt{\Delta}}\, \exp\left(S\right) \\
    &= \frac{1}{2\pi^2}\sqrt{\frac{\alpha\beta}{r_0^2-r^2}} \,\exp\left( -\delta \, u^2 - \alpha \left(z_1 - \gamma \right)^2 - \beta \, z_2^2\right) ,
\end{split}  \label{eq:expression_for_f_Q}
\end{equation}
and the integral $I_1$ (Eq.~\eqref{eq:I1_defn_z1_z2}) can be written as,
\begin{equation}
    I_1 = \frac{e^{-\delta\,u^2}}{4\pi^2}\sqrt{\frac{\alpha\beta}{r_0^2-r^2}}\int_{-\infty}^\infty \int_{|z_1|}^\infty \left(z_2^2 - z_1^2\right) \times\exp\left(- \alpha \left(z_1 - \gamma \right)^2 - \beta \, z_2^2\right) \dd z_2 \, \dd z_1 \,.
\end{equation}
Since we have already shown that $\alpha>0$, $\beta>0$, and $\gamma\in \R$, the result of Theorem~\ref{thrm:integral_main} applies; therefore, we get the following expression for $I_1$:
\begin{equation}
\begin{split}
    I_1 = \frac{e^{-\delta\,u^2}}{4\pi^2 \sqrt{r_0^2-r^2}} \Biggl[\frac{e^{-\alpha \gamma^2}}{2\sqrt{\alpha\beta}} &\left(1 + \sqrt{\pi} \gamma \sqrt{\alpha + \beta} e^{\frac{\alpha^2 \gamma^2}{\alpha + \beta}} \Erf\left(\frac{\alpha \gamma}{\sqrt{\alpha + \beta}}\right)\right)  \\& + \pi \left(\frac{\alpha - \beta - 2\alpha\beta\gamma^2}{\alpha\beta}\right) \Ot\left(\gamma\sqrt{\frac{2\alpha\beta}{\alpha+\beta}}, \sqrt{\frac{\alpha}{\beta}}\right)\Biggr] \,. \label{eq:final_expr_for_I1}
\end{split}
\end{equation}
Finally, since $\Iup=I_1-I_2$, using Eq.~\eqref{eq:final_expr_for_I2} and \eqref{eq:final_expr_for_I1} we have,
\begin{equation}
\begin{aligned}
    \Iup = \frac{e^{-\delta\,u^2}}{4\pi^2 \sqrt{r_0^2-r^2}} \Biggl[&\frac{e^{-\alpha \gamma^2}}{2\sqrt{\alpha\beta}} \left(1 + \sqrt{\pi} \, \gamma \sqrt{\alpha + \beta} \, e^{\frac{\alpha^2 \gamma^2}{\alpha + \beta}} \, \Erf\left(\frac{\alpha \gamma}{\sqrt{\alpha + \beta}}\right)\right)  \\& + \pi \left(\frac{\alpha - \beta - 2\alpha\beta\gamma^2}{\alpha\beta}\right) \Ot\left(\gamma\sqrt{\frac{2\alpha\beta}{\alpha+\beta}}, \sqrt{\frac{\alpha}{\beta}}\right)\Biggr] - \frac{1}{4\pi^2 }\left(\frac{q_0}{r_0}\right)e^{-u^2/r_0} \,.
\end{aligned} \label{eq:I_final_result_owensT}
\end{equation}
From Eq.~\eqref{eq:variance_formula_triple_integral_main}, the $\Var\left[\Nup\right]$ depends on $\Iup$ as follows:
\begin{equation}
    \Var\left[\Nup\right] = \E\left[\Nup\right] + 2\,T \int_{0}^T \left(1-\frac{t}{T}\right) \Iup \, \dd t\,, \label{eq:var_in_terms_of_I}
\end{equation}
where $\E\left[\Nup\right]$ is given by the Rice formula derived in Eq.~\eqref{eq:rice_formula_specific_main},
\begin{equation}
    \E\left[\Nup\right] = T\, \frac{1}{2\pi}\sqrt{\frac{q_0}{r_0}} \, e^{-u^2/(2r_0)} \,.
\end{equation}

\section{Theory for the Crossing Process}
\label{section:crossings}
In this section, we derive the formulae for the mean, variance, and Fano factor for the crossing process, i.e., including both \textit{up} and \textit{down} crossings. The steps involved in deriving these formulae are similar to those used for the upcrossing process in Section~\ref{section:known_theory}; therefore, we offer a brief discussion of the proof. The counting process for the total number of crossings can be written as,
\begin{equation}
    \Nu = \int_0^T \delta\left(X_{t_1} - u\right) \left|\dot{X}_{t_1}\right| \, \dd t_1 \label{eq:defn_Nu_delta_crossing}
\end{equation}

\subsection{Mean Number of Crossings}
Taking the expectation of the equation above, we get,
\begin{equation}
\begin{split}
    \E\left[\Nu\right] &= \int_0^T \E\left[\delta\left(X_{t_1} - u\right) \left|\dot{X}_{t_1}\right|  \right] \, \dd t_1 \\
    &= \int_0^T \int_{-\infty}^\infty |y|\, f_{\mathbf{P}}(u,\,y) \, \dd y \, \dd t_1
\end{split} \label{eq:rice_formula_crossings1}
\end{equation}
where $f_{\mathbf{P}}(x,\,y)$ is the p.d.f. of the random vector $\mathbf{P} = [X_{t_1}, \, \dot{X}_{t_1}]^\top$.

\subsection{Variance and Fano Factor of Crossings}
Using an argument similar to that in Section~\ref{section:variance_derivation1} we can write $\E\left[\Nu^2\right]$ as the following integral,
\begin{equation}
\begin{split}
    \E\left[\Nu^2\right] &=  \E\left[\Nu\right] + \underbrace{\E\left[\int_0^T \int_0^T \mathbf{1}_{t_1\neq t_2}  \,\dd {N_u}(t_1)  \, \dd {N_u}(t_2)\right]}_{\text{\normalsize \(M_2\)}}
\end{split}
\end{equation}
Now using Eq.~\eqref{eq:defn_Nu_delta_crossing}, $M_2$ can be written as,
\begin{equation}
\begin{split}
    M_2 &= \int_0^T \int_0^T \E\left[\mathbf{1}_{t_1\neq t_2} \, \delta\left(X_{t_1} - u\right) \, \delta\left(X_{t_2} - u\right)  \left|\dot{X}_{t_1}\right|  \left|\dot{X}_{t_2}\right| \right] \dd t_1 \, \dd t_2 \\
    &= \int_0^T \int_0^T \int_{-\infty}^\infty \int_{-\infty}^\infty \left|y_1 y_2\right| \, f_{\mathbf{Q}}(u, \,u, \,y_1, \,y_2; \, t_2 - t_1) \, \dd y_1 \, \dd y_2 \, \dd t_1 \, \dd t_2
\end{split}
\end{equation}
where $f_{\mathbf{Q}}(u, \,u, \,y_1, \,y_2; \, t_2 - t_1)$ is the p.d.f. of the random vector $\mathbf{Q} = [X_{t_1}, \, X_{t_2}, \, \dot{X}_{t_1}, \, \dot{X}_{t_2}]^\top$. The variance of $\Nu$ is given by,
\begin{equation}
\begin{split}
    \Var\left[\Nu\right] &= \E\left[\Nu^2\right] - \E\left[\Nu\right]^2 \\
    &=  \int_0^T \int_0^T \int_{-\infty}^\infty \int_{-\infty}^\infty \left|y_1 y_2\right| \, f_{\mathbf{Q}}(u, \,u, \,y_1, \,y_2; \, t_2 - t_1) \, \dd y_1 \, \dd y_2 \, \dd t_1 \, \dd t_2  + \E\left[\Nu\right] - \E\left[\Nu\right]^2 
\end{split}
\end{equation}
We use Eq.~\eqref{eq:rice_formula_crossings1} to write $\E\left[\Nu\right]^2$ as a double integral, which gives us,
\begin{equation}
\begin{split}
    \Var\left[\Nu\right] &= \E\left[\Nu^2\right] - \E\left[\Nu\right]^2 \\
    &= \E\left[\Nu\right] + \int_0^T \int_0^T \int_{-\infty}^\infty \int_{-\infty}^\infty \left|y_1 y_2\right| \bigl(f_{\mathbf{Q}}(u, \,u, \,y_1, \,y_2; \, t_2 - t_1) - f_{\mathbf{P}}(u,\,y_1) \, f_{\mathbf{P}}(u,\,y_2)\bigr)\, \dd y_1 \, \dd y_2 \, \dd t_1 \, \dd t_2
\end{split}
\raisetag{40pt}
\end{equation}
We define:
\begin{equation}
    I^\prime = \int_0^T \int_0^T \int_{-\infty}^\infty \int_{-\infty}^\infty \left|y_1 y_2\right| \,\left( f_{\mathbf{Q}}(u, u, y_1, y_2;  t_2 - t_1) - f_{\mathbf{P}}(u,y_1) \, f_{\mathbf{P}}(u,y_2)\right)\, \dd y_1 \, \dd y_2 \, \dd t_1 \, \dd t_2
\end{equation}

Making the variable transformation $t_2-t_1=t$, $t_2=s$ and using the arguments about the integral from Section~\ref{section:variance_derivation1}, we can write $I^\prime$ as,
\begin{equation}
    I^\prime = \, 2 \,T \int_{0}^T \int_{-\infty}^\infty \int_{-\infty}^\infty \left(1-\frac{t}{T}\right) \left| y_1 y_2 \right|\left( f_{\mathbf{Q}}(u, \,u, \,y_1, \,y_2; \, t) - f_{\mathbf{P}}(u,\,y_1) \, f_{\mathbf{P}}(u,\,y_2)\right)\, \dd y_1 \, \dd y_2 \, \dd t
\end{equation}
Therefore, the variance is given by,
\begin{equation}
    \Var\left[\Nu\right] = \E\left[\Nu\right] + 2\,T \int\limits_{0}^T \int\limits_{-\infty}^\infty \int\limits_{-\infty}^\infty \left(1-\frac{t}{T}\right)  \left| y_1 y_2 \right| \bigl(f_{\mathbf{Q}}(u, \,u, \,y_1, \,y_2; \, t) - f_{\mathbf{P}}(u,\,y_1) \, f_{\mathbf{P}}(u,\,y_2)\bigr) \dd y_1 \, \dd y_2 \, \dd t
 \label{eq:variance_formula_triple_integral_crossing}
\end{equation}
In the long-time limit, if the condition in Eq.~\eqref{eq:pieterbarg_condition} is satisfied, we have,
\begin{equation}
    \lim_{T \rightarrow \infty}\frac{\Var\left[\Nu\right]}{T} = \lim_{T \rightarrow \infty}\frac{\E\left[\Nu\right]}{T} + 2 \int\limits_{0}^\infty \int\limits_{-\infty}^\infty \int\limits_{-\infty}^\infty \left|y_1 y_2 \right|\bigl( f_{\mathbf{Q}}(u, \,u, \,y_1, \,y_2;  t)  - f_{\mathbf{P}}(u,\,y_1) \, f_{\mathbf{P}}(u,\,y_2)\bigr)\, \dd y_1  \dd y_2  \dd t
 \label{eq:variance_formula_triple_integral_CLT_crossing}
\end{equation}
Finally, the Fano factor is given by,
\begin{equation}
    \FF = 1 + \frac{2\int\limits_{0}^\infty \int\limits_{-\infty}^\infty \int\limits_{-\infty}^\infty \left|y_1 y_2\right| \,\left( f_{\mathbf{Q}}(u, \,u, \,y_1, \,y_2; \, t) - f_{\mathbf{P}}(u,\,y_1) \, f_{\mathbf{P}}(u,\,y_2)\right)\, \dd y_1 \, \dd y_2 \, \dd t}{\int\limits_{-\infty}^\infty |y|\, f_{\mathbf{P}}(u,\,y) \, \dd y}. \label{eq:fano_factor_upcrossing_1_crossing}
\end{equation}

\section{Explicit Formulae for Level Crossings}
\label{section:explicit_formulae_for_crossings}
We employ an approach similar to that outlined in the proof of Theorem~\ref{thrm:main_theorem_upcrossing}. First, we use the variance formula from Eq.~\eqref{eq:variance_formula_triple_integral_crossing} to calculate the double integral in $y_1$ and $y_2$,
\begin{equation}
    I = \underbrace{\int_{-\infty}^\infty \int_{-\infty}^\infty \left|y_1 y_2 \right| f_{\mathbf{Q}}(u, u, y_1, y_2;  t) \, \dd y_1 \, \dd y_2}_{\text{\normalsize \(I_1\)}} - \underbrace{\int_{-\infty}^\infty \int_{-\infty}^\infty \left|y_1 y_2 \right| f_{\mathbf{P}}(u,y_1) \, f_{\mathbf{P}}(u,y_2) \, \dd y_1 \, \dd y_2}_{\text{\normalsize \(I_2\)}} \label{eq:defn_of_I_split_crossing}
\end{equation}
$I_2$ can be evaluated as,
\begin{equation}
\begin{split}
    I_2 &= \frac{e^{-u^2/r_0}}{4\pi^2 \, r_0 q_0}\int_{-\infty}^\infty \int_{-\infty}^\infty \left|y_1 y_2 \right| \exp\left(-\frac{1}{2q_0}\left(y_1^2 + y_2^2\right)\right) \, \dd y_1 \, \dd y_2 \\
    &= \frac{1}{\pi^2 }\left(\frac{q_0}{r_0}\right)e^{-u^2/r_0} \\
\end{split} \label{eq:final_expr_for_I2_crossing}
\end{equation}
To evaluate the expression for $I_1$, we make the variable transform $(y_1, \, y_2)\rightarrow (z_1, z_2)$, which gives us,
\begin{equation}
    I_1 = \frac{1}{2}\int_{-\infty}^\infty \int_{-\infty}^\infty \left|z_2^2 - z_1^2\right| \, f_{\mathbf{Q}}(u, \,u, \,z_1, \,z_2; \, t) \, \dd z_2 \, \dd z_1 \label{eq:I1_defn_z1_z2_crossing}
\end{equation}
Following the proof of Theorem~\ref{thrm:main_theorem_upcrossing}, the joint p.d.f. in the rotated variables takes the form $f_{\mathbf{Q}}(u, u, z_1, z_2; t) = \frac{1}{2\pi^2}\sqrt{\frac{\alpha\beta}{r_0^2-r^2}} \exp\left( -\delta u^2 - \alpha (z_1 - \gamma)^2 - \beta z_2^2\right)$. Substituting this, we get,
\begin{equation}
\begin{split}
    I_1 &= \frac{1}{2}\int_{-\infty}^\infty \int_{-\infty}^\infty \left|z_2^2 - z_1^2\right| \left(\frac{1}{2\pi^2}\sqrt{\frac{\alpha\beta}{r_0^2-r^2}} \,\exp\left( -\delta \, u^2 - \alpha \left(z_1 - \gamma \right)^2 - \beta \, z_2^2\right)\right) \dd z_2 \, \dd z_1 \\
    &= \frac{e^{-\delta\,u^2}}{4\pi^2}\sqrt{\frac{\alpha\beta}{r_0^2-r^2}}\int_{-\infty}^\infty \int_{-\infty}^\infty \left|z_2^2 - z_1^2\right| \exp\left(- \alpha \left(z_1 - \gamma \right)^2 - \beta \, z_2^2\right) \dd z_2 \, \dd z_1
\end{split}
\end{equation}
We derive the result for this double integral in Theorem~\ref{thrm:integral_crossing}, which gives us,
\begin{equation}
\begin{split}
    I_1 = \frac{e^{-\delta\,u^2}}{4\pi^2\sqrt{r_0^2-r^2}} \Biggl[&\frac{2e^{-\alpha \gamma^2}}{\sqrt{\alpha\beta}} \left(1 + \sqrt{\pi} \, \gamma \sqrt{\alpha + \beta}  \, e^{\frac{\alpha^2 \gamma^2}{\alpha + \beta}}  \, \Erf\left(\frac{\alpha \gamma}{\sqrt{\alpha + \beta}}\right)\right) \\& + 4 \pi \left(\frac{\alpha - \beta - 2\alpha \beta \gamma^2}{\alpha\beta}\right) \left(\Ot\left(\gamma\sqrt{\frac{2\alpha\beta}{\alpha+\beta}}, \sqrt{\frac{\alpha}{\beta}}\right)-\frac{1}{8}\right)\Biggr],\label{eq:final_expr_for_I1_crossing}
\end{split}
\end{equation}
and the final expression for $I$ is,
\begin{equation}
\begin{aligned}
    I = \frac{e^{-\delta\,u^2}}{4\pi^2\sqrt{r_0^2-r^2}} \Biggl[&\frac{2e^{-\alpha \gamma^2}}{\sqrt{\alpha\beta}} \left(1 + \sqrt{\pi} \, \gamma \sqrt{\alpha + \beta}  \, e^{\frac{\alpha^2 \gamma^2}{\alpha + \beta}}  \, \Erf\left(\frac{\alpha \gamma}{\sqrt{\alpha + \beta}}\right)\right) \\& + 4 \pi \left(\frac{\alpha - \beta - 2\alpha \beta \gamma^2}{\alpha\beta}\right) \left(\Ot\left(\gamma\sqrt{\frac{2\alpha\beta}{\alpha+\beta}}, \sqrt{\frac{\alpha}{\beta}}\right)-\frac{1}{8}\right)\Biggr] - \frac{1}{\pi^2 }\left(\frac{q_0}{r_0}\right)e^{-u^2/r_0}.
\end{aligned} \label{eq:I_final_result_owensT_crossing}
\end{equation}
Plugging the p.d.f. of $\mathbf{P}$ into Eq.~\eqref{eq:rice_formula_crossings1} gives us,
\begin{equation}
    \E\left[\Nu\right] = T \, \frac{1}{\pi}\sqrt{\frac{q_0}{r_0}} \, e^{-u^2/(2r_0)} \label{eq:rice_formula_gaussian_crossing}
\end{equation}
Using the result from Eq.~\eqref{eq:variance_formula_triple_integral_crossing} and the expression for $I$ found in Eq.~\eqref{eq:I_final_result_owensT_crossing}, for a finite time $T$, we have,
\begin{equation}
    \Var\left[\Nu\right] = T \left(\frac{1}{\pi}\sqrt{\frac{q_0}{r_0}} \, e^{-u^2/(2r_0)} + 2\int_{0}^T \left(1-\frac{t}{T}\right)  \, I \, \dd t\right), \label{eq:variance_finite_time_final_crossing}
\end{equation}
and in the long-time limit, we have,
\begin{equation}
    \lim_{T \rightarrow \infty}\frac{\Var\left[\Nu\right]}{T} =  \frac{1}{\pi}\sqrt{\frac{q_0}{r_0}} \, e^{-u^2/(2r_0)} + 2\int_{0}^\infty I \, \dd t. \label{eq:variance_infinite_time_final_crossing}
\end{equation}
\begin{equation}
    \FF = 1 + 2 \pi \sqrt{\frac{r_0}{q_0}} \,e^{u^2/(2r_0)} \int_{0}^\infty I \, \dd t. \label{eq:fano_factor_final_crossing}
\end{equation}

\section{Special Formulae for Zero Level Upcrossings and Crossings}
\label{section:special_formulae_mean}

\subsection{Upcrossings}
\label{section:mean_crossings}
We present the simplified formulae for the mean and variance of zero-level (mean-level) upcrossings, i.e., when $u=0$. First, we substitute $u=0$ in Eq.~\eqref{eq:rice_formula_gaussian} to get the result for $\E\left[N_0^\uparrow(T)\right]$,
\begin{equation}
    \E\left[N_0^\uparrow(T)\right] = T \frac{1}{2\pi}\sqrt{\frac{q_0}{r_0}}.
\end{equation}
Similarly, substituting $u=0$ in Eq.~\eqref{eq:Iup_final_result_owensT}, we get,
\begin{equation}
\begin{split}
    I_0 =& \frac{1}{4\pi^2\sqrt{r_0^2-r^2}} \Biggl[\frac{1}{2\sqrt{\alpha\beta}} + \pi \left(\frac{\alpha - \beta}{\alpha\beta}\right) \Ot\left(0, \sqrt{\frac{\alpha}{\beta}}\right)\Biggr] - \frac{1}{4\pi^2 }\left(\frac{q_0}{r_0}\right) \\
    =& \frac{1}{4\pi^2\sqrt{{r_0^2-r^2}}} \Biggl[\frac{1}{2\sqrt{\alpha\beta}} + \pi \left(\frac{\alpha - \beta}{\alpha\beta}\right) \left(\frac{1}{2\pi}\arctan\left(\sqrt{\frac{\alpha}{\beta}}\right)\right)\Biggr] - \frac{1}{4\pi^2 }\left(\frac{q_0}{r_0}\right) \\
    =& \frac{1}{8\pi^2\sqrt{r_0^2-r^2}}\Biggl[\frac{1}{\sqrt{\alpha\beta}} + \left(\frac{\alpha - \beta}{\alpha\beta}\right) \arctan\left(\sqrt{\frac{\alpha}{\beta}}\right)\Biggr] - \frac{1}{4\pi^2 }\left(\frac{q_0}{r_0}\right)
\end{split}
\end{equation}
Note that in the simplification above, we have used the fact that $\gamma=0$ when $u=0$, and $\Ot(0,a)=\arctan(a)/(2\pi)$. The variance and Fano factor are found by substituting $u=0$ in Eq.~\eqref{eq:variance_finite_time_final}, \eqref{eq:variance_infinite_time_final} and \eqref{eq:fano_factor_final}. 
\begin{equation}
\begin{aligned}
    \Var\left[\Nup\right] = T \Biggl(\frac{1}{2\pi}\sqrt{\frac{q_0}{r_0}}+ \frac{1}{4\pi^2}\int_{0}^T \left(1-\frac{t}{T}\right)  \Biggl[ \frac{1}{\sqrt{r_0^2-r^2}}\Biggl[\frac{1}{\sqrt{\alpha\beta}} + \left(\frac{\alpha - \beta}{\alpha\beta}\right) \arctan\left(\sqrt{\frac{\alpha}{\beta}}\right)\Biggr] - \frac{2q_0}{r_0}\Biggr]  \dd t\Biggr).
\end{aligned}
\end{equation}
This formula has been reported by Steinberg \cite{steinberg1955short} and Leadbetter \cite{leadbetter1965variance}. In the long-time limit, we have,
\begin{equation}
    \frac{\Var\left[\Nup\right]}{T} =\frac{1}{2\pi}\sqrt{\frac{q_0}{r_0}} + \frac{1}{4\pi^2}\int\limits_{0}^\infty  \frac{1}{\sqrt{r_0^2-r^2}}\Biggl[\frac{1}{\sqrt{\alpha\beta}} + \left(\frac{\alpha - \beta}{\alpha\beta}\right) \arctan\left(\sqrt{\frac{\alpha}{\beta}}\right)\Biggr] - \frac{2q_0}{r_0}  \, \dd t
\end{equation}
\begin{equation}
    \FFup = 1 +\frac{1}{\pi} \sqrt{\frac{r_0}{q_0}} \int_{0}^\infty \frac{1}{2\sqrt{r_0^2-r^2}}\Biggl[\frac{1}{\sqrt{\alpha\beta}} + \left(\frac{\alpha - \beta}{\alpha\beta}\right) \arctan\left(\sqrt{\frac{\alpha}{\beta}}\right)\Biggr] - \frac{q_0}{r_0} \, \dd t
\end{equation}

\subsection{Crossings}
We find the simplified formulae for the mean and variance of zero-level (mean-level) crossings by substituting $u=0$ in the results reported in Section~\ref{section:explicit_formulae_for_crossings}. We first substitute $u=0$ in Eq.~\eqref{eq:rice_formula_gaussian_crossing} to get the formula for the mean,
\begin{equation}
    \E\left[\Nu\right] = T \, \frac{1}{\pi}\sqrt{\frac{q_0}{r_0}}
\end{equation}
Next, to find the formula for variance, we evaluate $I$ from Eq.~\eqref{eq:I_final_result_owensT_crossing} at $u=0$,
\begin{equation}
\begin{split}
    I_0 =& \frac{1}{4\pi^2\sqrt{r_0^2-r^2} }\Biggl[\frac{2}{\sqrt{\alpha\beta}} + 4\pi \left(\frac{\alpha - \beta}{\alpha\beta}\right) \left(\Ot\left(0, \sqrt{\frac{\alpha}{\beta}}\right) - \frac{1}{8}\right)\Biggr] - \frac{1}{\pi^2 }\left(\frac{q_0}{r_0}\right) \\
    =& \frac{1}{4\pi^2\sqrt{r_0^2-r^2}}\Biggl[\frac{2}{\sqrt{\alpha\beta}} + 4\pi \left(\frac{\alpha - \beta}{\alpha\beta}\right) \left(\frac{1}{2\pi}\arctan\left(\sqrt{\frac{\alpha}{\beta}}\right) - \frac{1}{8}\right)\Biggr] - \frac{1}{\pi^2 }\left(\frac{q_0}{r_0}\right) \\
    =& \frac{1}{4\pi^2\sqrt{r_0^2-r^2}}\Biggl[\frac{2}{\sqrt{\alpha\beta}} + 2 \left(\frac{\alpha - \beta}{\alpha\beta}\right)\left(\arctan\left(\sqrt{\frac{\alpha}{\beta}}\right) - \frac{\pi}{4}\right) \Biggr] - \frac{1}{\pi^2 }\left(\frac{q_0}{r_0}\right)\\
    =& \frac{1}{2\pi^2\sqrt{r_0^2-r^2}}\Biggl[\frac{1}{\sqrt{\alpha\beta}} + \left(\frac{\alpha - \beta}{\alpha\beta}\right) \arctan\left(\frac{\sqrt{\alpha/\beta}-1}{\sqrt{\alpha/\beta}+1}\right)\Biggr] - \frac{1}{\pi^2 }\left(\frac{q_0}{r_0}\right)
\end{split}
\end{equation}
Therefore, we have the following expression for the variance,
\begin{equation}
    \Var\left[\Nu\right] = T \Biggl(\frac{1}{\pi}\sqrt{\frac{q_0}{r_0}}  + \frac{2}{\pi^2}\int\limits_{0}^T \biggl(1-\frac{t}{T}\biggr)  \Biggl[\frac{1}{2\sqrt{r_0^2-r^2}}\Biggl[\frac{1}{\sqrt{\alpha\beta}}  + \left(\frac{\alpha - \beta}{\alpha\beta}\right) \arctan\left(\frac{\sqrt{\alpha/\beta}-1}{\sqrt{\alpha/\beta}+1}\right)\Biggr] - \frac{q_0}{r_0}\Biggr]  \dd t\Biggr)
\end{equation}
In the long-time limit, we have,
\begin{equation}
    \frac{\Var\left[\Nu\right]}{T} =\frac{1}{\pi}\sqrt{\frac{q_0}{r_0}} + \frac{1}{\pi^2} \int\limits_{0}^\infty  \frac{1}{\sqrt{r_0^2-r^2}}\Biggl[\frac{1}{\sqrt{\alpha\beta}} + \left(\frac{\alpha - \beta}{\alpha\beta}\right) \arctan\left(\frac{\sqrt{\alpha/\beta}-1}{\sqrt{\alpha/\beta}+1}\right)\Biggr] - \frac{2q_0}{r_0} \, \dd t
\end{equation}
\begin{equation}
    \FF = 1 +\frac{2}{\pi} \sqrt{\frac{r_0}{q_0}} \int_{0}^\infty   \frac{1}{2\sqrt{r_0^2-r^2}}\Biggl[\frac{1}{\sqrt{\alpha\beta}} + \left(\frac{\alpha - \beta}{\alpha\beta}\right) \arctan\left(\frac{\sqrt{\alpha/\beta}-1}{\sqrt{\alpha/\beta}+1}\right)\Biggr] - \frac{q_0}{r_0}\,  \dd t
\end{equation}

\section{Note About the Dimensionality}
In this section, we note the units of various variables ($\alpha, \beta, \gamma, \delta$) and expressions ($\Iup, I$) used to define our main results. 
We note that the autocorrelation function $r(t)$ will be of the form,
\begin{equation}
    r \coloneq r(t) = \sigma^2 \, g\left(t/\tau;\,\btheta\right)
\end{equation}
where $\sigma$ defines the dimension of the stochastic process, $\tau$ defines the timescales of the process, and $\btheta$ denotes dimensionless variables associated with the correlation function. Therefore, $g\left(t/\tau;\,\btheta\right)$ is a dimensionless function. Taking the derivatives with respect to time on both sides, we get,
\begin{equation}
    p = r^\prime(t) = \frac{\sigma^2}{\tau} \, g^\prime\left(t/\tau;\,\btheta\right) \qquad \text{and} \qquad q = -r^{\prime\prime}(t) = -\frac{\sigma^2}{\tau^2} \, g^{\prime\prime}\left(t/\tau;\,\btheta\right)
\end{equation}
Substituting $t=0$ in these expressions, 
\begin{equation}
    r_0 = \sigma^2 \, g\left(0;\,\btheta\right) \qquad \text{and} \qquad q_0 = -\frac{\sigma^2}{\tau^2} \, g^{\prime\prime}\left(0;\,\btheta\right)
\end{equation}
Consider the mean, variance, and Fano factor results for level upcrossings, downcrossings, and total crossings. All of the results depend on $\alpha$, $\beta$, $\gamma$, and $\delta$. Therefore, first, we determine the dimensionality of these variables by plugging in the expressions above. First, consider $\alpha$,
\begin{equation}
\begin{split}
    \alpha &= -\frac{r+r_0}{2\left(p^2+(q-q_0)(r+r_0)\right)} \\
    &= - \frac{\tau^2}{\sigma^2}\left(\frac{g\left(t/\tau;\,\btheta\right) + g\left(0;\,\btheta\right)}{2\left((g^\prime\left(t/\tau;\,\btheta\right))^2 - (g^{\prime\prime}\left(t/\tau;\,\btheta\right)-g^{\prime\prime}\left(0;\,\btheta\right))(g\left(t/\tau;\,\btheta\right)+g\left(0;\,\btheta\right))\right)}\right)
\end{split}
\end{equation}
where the expression in the bracket is dimensionless; therefore,
\begin{equation}
    \alpha = \frac{\tau^2}{\sigma^2} \, h_\alpha\left(\tilde{t}; \, \btheta\right)
\end{equation}
where $\tilde{t} = t/\tau$ is a dimensionless time and $h_\alpha\left(\tilde{t};\, \btheta\right)$ is the dimensionless expression. A similar analysis reveals,
\begin{equation}
    \beta = \frac{\tau^2}{\sigma^2} \, h_\beta\left(\tilde{t}; \, \btheta\right), \qquad \gamma = \frac{u}{\tau} \, h_\gamma\left(\tilde{t}; \, \btheta\right) \qquad \text{and} \qquad \delta = \frac{1}{\sigma^2} \, h_\delta\left(\tilde{t}; \, \btheta\right)
\end{equation}
where $h_\beta\left(\tilde{t};\, \btheta\right)$, $h_\gamma\left(\tilde{t};\, \btheta\right)$ and $h_\delta\left(\tilde{t};\, \btheta\right)$ are dimensionless functions. Using these expressions, it can be shown that $\Iup$ and $I$ can be separated into the following dimensional and dimensionless parts,
\begin{equation}
    \Iup = \frac{1}{\tau^2} \, h_{\Iup}\left(\tilde{t};\, \psi, \, \btheta\right) \qquad \text{and} \qquad I = \frac{1}{\tau^2} \, h_{I}\left(\tilde{t};\, \psi, \, \btheta\right)
\end{equation}
where we additionally define the dimensionless parameter $\psi = u / \sigma$. Separating the dimensional and dimensionless expression for mean of upcrossings, we get,
\begin{equation}
    \E\left[\Nup\right] = T \,\frac{1}{2\pi}\sqrt{\frac{q_0}{r_0}} \, e^{-u^2/(2r_0)} = \frac{T}{\tau}\left( \frac{1}{2\pi}\sqrt{\frac{-g^{\prime\prime}\left(0;\,\btheta\right)}{g\left(0;\,\btheta\right)}} \, e^{-\psi^2/(2g\left(0;\,\btheta\right))}\right).
\end{equation}
Similarly, for variance in finite time, we have,
\begin{equation}
\begin{split}
    \Var\left[\Nup\right] &= T \left(\frac{1}{2\pi}\sqrt{\frac{q_0}{r_0}} \, e^{-u^2/(2r_0)} + 2\int_{0}^T \left(1-\frac{t}{T}\right)  \, \Iup \, \dd t\right) \\
    &= \frac{T}{\tau} \left(\frac{1}{2\pi}\sqrt{\frac{-g^{\prime\prime}\left(0;\,\btheta\right)}{g\left(0;\,\btheta\right)}} \, e^{-\psi^2/(2g\left(0;\,\btheta\right))} + 2\int_{0}^{T/\tau} \left(1-\frac{\tau}{T} \, \tilde{t}\right)  \,  h_{\Iup}\left(\tilde{t};\, \psi, \, \btheta\right) \, \dd \tilde{t}\right),
\end{split}
\end{equation}
and in the long-time limit,
\begin{equation}
    \Var\left[\Nup\right] = \frac{T}{\tau} \left(\frac{1}{2\pi}\sqrt{\frac{-g^{\prime\prime}\left(0;\,\btheta\right)}{g\left(0;\,\btheta\right)}} \, e^{-\psi^2/(2g\left(0;\,\btheta\right))} + 2\int_{0}^{\infty}   h_{\Iup}\left(\tilde{t};\, \psi, \, \btheta\right) \, \dd \tilde{t}\right).
\end{equation}
Finally, the Fano factor (Eq.~\eqref{eq:fano_factor_upcrossing_1}) can be shown to be independent of $\tau$,
\begin{equation}
    \FFup = 1 + 4 \pi \sqrt{\frac{r_0}{q_0}} \,e^{u^2/(2r_0)} \int_{0}^\infty \Iup \, \dd t = 1 + 4 \pi \sqrt{\frac{g\left(0;\,\btheta\right)}{-g^{\prime\prime}\left(0;\,\btheta\right)}} \, e^{\psi^2/(2g\left(0;\,\btheta\right))} \int_{0}^\infty h_{\Iup}\left(\tilde{t};\, \psi, \, \btheta\right)  \, \dd \tilde{t}
\end{equation}
Similar results can be found for the mean, variance, and Fano factor of the level crossing process.

\section{Derivation of SDHO Autocorrelation Functions}
\label{section:sdho_derivation}
Here we derive the autocorrelation functions for the stochastic damped harmonic oscillator stated in the main text (Eqs.~\eqref{eq:acf_underdamped}--\eqref{eq:acf_overdamped}). The SDE (Eq.~\eqref{eq:sdho_sde}) can be recast as a two-dimensional linear stochastic dynamical system:
\begin{equation}
\begin{split}
    \frac{\dd x}{\dd t} &= v \\
    \frac{\dd v}{\dd t} &= -\omega_0^2 x - 2 \zeta \omega_0 v + \sqrt{4 \zeta \omega_0 \temp} \, \eta(t),
\end{split}
\label{eq:sdho_sde_2d}
\end{equation}
where \( v(t) = \dot{x}(t) \) is the velocity of the oscillator.
This system conforms to the general form of a linear stochastic dynamical system:
\begin{equation}
    \dd\mathbf{x} = \mathbf{J} \mathbf{x} \dd t + \mathbf{L} \dd\mathbf{W}
\end{equation}
where $\mathbf{x} = [x, v]^\top$ is the state vector, and $\dd\mathbf{W}=[\dd W_1, \dd W_2]^\top$ is a vector of independent standard Wiener increments. The Jacobian matrix $\mathbf{J}$ and the noise diffusion matrix $\mathbf{L}$ are given by:
\begin{equation}
\renewcommand{\arraystretch}{1.0}
    \mathbf{J} = \begin{bmatrix}
        0 & 1 \\
        -\omega_0^2 & -2 \zeta \omega_0
    \end{bmatrix} \quad \text{and} \quad
    \mathbf{L} = \begin{bmatrix}
        0 & 0 \\
        0 &\sqrt{4 \zeta \omega_0 \temp}
    \end{bmatrix}.
\label{eq:sdho_matrices}
\end{equation}
The power spectral density matrix of the state vector \(\mathbf{x}\) can be obtained using the formula \cite{rawat2024element}:
\begin{equation}
    \boldsymbol{\mathcal{S}}_\mathbf{x}(\omega) = (\dot{\iota}\omega \mathbf{I} - \mathbf{J})^{-1} \, \mathbf{L} \, \mathbf{L}^\top (-\dot{\iota}\omega \mathbf{I} - \mathbf{J})^{-\top}.
\label{eq:psd_general_formula_sdho}
\end{equation}
Extracting the (1,1) component of \(\boldsymbol{\mathcal{S}}_\mathbf{x}(\omega)\) yields the power spectral density of the position variable \(x(t)\):
\begin{equation}
    \mathcal{S}_x(\omega) = \frac{4 \zeta \omega_0 \temp}{(\omega_0^2 - \omega^2)^2 + (2 \zeta \omega_0 \omega)^2}.
\label{eq:sdho_psd}
\end{equation}
According to the Wiener--Khinchin theorem, the autocorrelation function of the stationary process \(x(t)\), denoted by \(r_x(t) = \E[x(t')x(t'+t)]\), is the inverse Fourier transform of its power spectral density:
\begin{equation}
    r_x(t) = \mathcal{F}^{-1}\left\{\mathcal{S}_x(\omega)\right\}(t).
\label{eq:acf_ift_sdho}
\end{equation}
Calculating the inverse Fourier transform of Eq.~\eqref{eq:sdho_psd} provides the correlation functions $r_x(t)$ in the three distinct damping regimes stated in the main text (Eqs.~\eqref{eq:acf_underdamped}--\eqref{eq:acf_overdamped}). Note that $r_x(0) = \temp/\omega_0^2$ in all cases, representing the variance of the position $x(t)$ as determined by the equipartition theorem.

\section{Additional Theorems}

\begin{theorem} \label{thrm:positivity_of_correlation_term}
Let $r(t)$ be the autocorrelation function of a one-dimensional real Gaussian stationary process. For brevity, we introduce the following notations: $r(t)\rightarrow r$, $r(0)\rightarrow r_0$, $\rp(t) \rightarrow p$, $\rp(0)\rightarrow p_0$, $-\rpp(t) \rightarrow q$, and $-\rpp(0)\rightarrow q_0$. Then, for all $t \geq 0$ the following expressions are strictly positive,
\begin{equation}
\begin{split}
 &-\frac{r+r_0}{2\left(p^2+(q-q_0)(r+r_0)\right)} > 0 \\
 &-\frac{r_0-r}{2\left(p^2+(q+q_0)(r-r_0)\right)} > 0
\end{split}
\end{equation}
\end{theorem}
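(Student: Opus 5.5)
The plan is to read both denominators as Cauchy--Schwarz defects for suitable pairs of jointly Gaussian variables built from $X_0, X_t, \dot X_0, \dot X_t$. Strict positivity then reduces to excluding the equality case, which we rule out using the decay assumption $r(t)\to 0$. We work with $t>0$; at $t=0$ both expressions are of the form $0/0$, and only $t>0$ enters the integrals.

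\emph{Numerators.} For $t>0$ the assumption $|r(t)|<r(0)$ gives $r+r_0>0$ and $r_0-r>0$. It therefore suffices to show
\begin{equation*}
D_1 := p^2+(q-q_0)(r+r_0)<0, \qquad D_2 := p^2+(q+q_0)(r-r_0)<0 .
\end{equation*}

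\emph{Cauchy--Schwarz.} Set $U=X_0+X_t$ and $V=\dot X_t-\dot X_0$. Using the entries of $\boldsymbol{\Sigma}_{\mathbf{Q}}$ in Eq.~\eqref{eq:correlation_mat_Q_main_new_notation}, we get
\begin{equation*}
\Var U=2(r_0+r), \qquad \Var V=2(q_0-q), \qquad \mathrm{Cov}(U,V)=2p .
\end{equation*}
Hence $\mathrm{Cov}(U,V)^2\le \Var U\,\Var V$ reads $p^2\le (r_0+r)(q_0-q)$, that is, $D_1\le 0$. Likewise, set $W=X_t-X_0$ and $Z=\dot X_0+\dot X_t$. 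Then
\begin{equation*}
\Var W=2(r_0-r), \qquad \Var Z=2(q_0+q), \qquad \mathrm{Cov}(W,Z)=-2p ,
\end{equation*}
which gives $p^2\le (r_0-r)(q_0+q)$, that is, $D_2\le 0$. These covariances are routine to check and are the same ones that produce the factorization $\Delta=D_1D_2$ in Eq.~\eqref{eq:determinant_expr}.

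\emph{Strictness (main obstacle).} Equality in either inequality means the two variables are linearly dependent in $L^2$. Since $\Var U>0$, equality for $D_1$ means $V=cU$ almost surely for some constant $c$; the case of $D_2$ is identical. To rule this out we pass to the spectral representation
\begin{equation*}
X_s=\int e^{i\lambda s}\,\dd M(\lambda), \qquad r(s)=\int e^{i\lambda s}\,\dd F(\lambda),
\end{equation*}
and use the isometry between the Gaussian space and $L^2(F)$. Then $V=cU$ becomes
\begin{equation*}
i\lambda\,(e^{i\lambda t}-1)=c\,(e^{i\lambda t}+1) \quad F\text{-a.e.}
\end{equation*}
Dividing by $e^{i\lambda t/2}$ turns this into $-2\lambda\sin(\lambda t/2)=2c\cos(\lambda t/2)$, i.e.\ $\lambda\tan(\lambda t/2)=-c$ (for $c\neq 0$, with an analogous relation for $D_2$). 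For fixed $t>0$ this equation has a countable, discrete solution set, and the same holds in the case $c=0$. So $F$ would be purely atomic. But the Fourier transform of a purely atomic finite measure does not tend to zero: by Wiener's theorem,
\begin{equation*}
\frac{1}{2S}\int_{-S}^{S}|r(s)|^2\,\dd s\;\to\;\sum_\lambda F(\{\lambda\})^2>0 .
\end{equation*}
This contradicts $r(t)\to 0$. Hence $D_1<0$ and $D_2<0$, and together with the positive numerators both expressions in the statement are strictly positive.

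The delicate point is exactly this equality case. The inequality itself is immediate, but strictness needs a global property of $r$, not just local smoothness, and the spectral-measure argument above is the route I would take. As a sanity check, in the equality case $\Delta=D_1D_2=0$, so a nondegenerate $\boldsymbol{\Sigma}_{\mathbf{Q}}(t)$ would give strictness directly. I would nonetheless prove nondegeneracy through the spectral argument rather than assume it.
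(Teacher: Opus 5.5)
Your proof is correct, and its core inequality is the same as the paper's. The paper writes $r$, $p$, $q$ as spectral integrals and applies Cauchy--Schwarz in $L^2(F)$ with the factors $\sqrt{1+\cos\omega t}$ and $\omega\sin(\omega t)/\sqrt{1+\cos\omega t}$, plus the analogous pair built from $1-\cos\omega t$. Under the isometry between the Gaussian space and $L^2(F)$, that is exactly your Cauchy--Schwarz for $(U,V)$ and $(W,Z)$.

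The real difference is the equality case, and there your argument is the sounder one.
\begin{itemize}
\item The paper excludes equality by calling the spectral measure ``nondegenerate, i.e.\ not concentrated on a finite set of points.'' That hypothesis is not enough. The proportionality set $\{\lambda:\lambda\sin(\lambda t/2)+c\cos(\lambda t/2)=0\}$ is countably infinite. So a symmetric, purely atomic $F$ with infinitely many atoms on that set (an almost-periodic $r$) would give $D_1=0$ at that particular $t$, and would still satisfy $|r(t)|<r(0)$ and smoothness.
\item Your step uses the standing assumption $r(t)\to0$ together with Wiener's theorem to rule out atoms of $F$ entirely. This is precisely what closes that gap.
\item You are also right to restrict to $t>0$. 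The paper asserts $r_0-r>0$ for all $t\ge0$, which fails at $t=0$, where the expressions are undefined. Strictly, the first is $-2r_0/0$ rather than $0/0$. Only $t>0$ enters the integrals.
\end{itemize}

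Your side remark is also correct: $(U,V)$ and $(W,Z)$ are mutually uncorrelated pairs, which explains the factorization $\Delta=D_1D_2$ that the paper only verifies by direct computation.
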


\begin{proof}
Since $r(t)$ is the autocorrelation function of a real stationary process, it has the spectral representation
\begin{equation}
    r(t)=\int_{-\infty}^{\infty}\cos(\omega t)\,dF(\omega),
\end{equation}
where $F$ is a nonnegative measure. Differentiating with respect to time, we obtain
\begin{equation}
    r'(t)=-\int_{-\infty}^{\infty}\omega\sin(\omega t)\,dF(\omega)
    \quad\text{and}\quad
    r''(t)=-\int_{-\infty}^{\infty}\omega^2\cos(\omega t)\,dF(\omega).
\end{equation}
In the notation defined in the theorem and using the integral formulae above, we have,
\begin{equation}
\begin{split}
    r &= \int_{-\infty}^{\infty}\cos(\omega t)\,dF(\omega) \\
    p &= -\int_{-\infty}^{\infty}\omega\sin(\omega t)\,dF(\omega) \\
    q &= \int_{-\infty}^{\infty}\omega^2\cos(\omega t)\,dF(\omega) \\
    r_0 &= \int_{-\infty}^{\infty}dF(\omega) \\
    p_0 &= 0 \\
    q_0 &= \int_{-\infty}^{\infty}\omega^2 \,dF(\omega)
\end{split}
\end{equation}
Using these relations, we have the following integral expressions for the relevant quantities,
\begin{equation}
    r+r_0=\int_{-\infty}^{\infty}\bigl(1+\cos(\omega t)\bigr)\,dF(\omega)
    \quad\text{and}\quad
    r_0-r=\int_{-\infty}^{\infty}\bigl(1-\cos(\omega t)\bigr)\,dF(\omega),
\end{equation}
and,
\begin{equation}
    q_0-q=\int_{-\infty}^{\infty}\omega^2\Bigl(1-\cos(\omega t)\Bigr)dF(\omega)
    \quad\text{and}\quad
    q_0+q=\int_{-\infty}^{\infty}\omega^2\Bigl(1+\cos(\omega t)\Bigr)dF(\omega).    
\end{equation}
Now, writing
\begin{equation}
    p^2=\left(\int_{-\infty}^{\infty}\omega\sin(\omega t)\,dF(\omega)\right)^2,    
\end{equation}
and applying the Cauchy--Schwarz inequality,
\begin{equation}
\left( \int_{S} f(x) g(x) \, d\mu(x) \right)^2 \le \left( \int_{S} f(x)^2 \, d\mu(x) \right) \left( \int_{S} g(x)^2 \, d\mu(x) \right), 
\end{equation}
gives us,
\begin{equation}
    p^2\le \left(\int_{-\infty}^{\infty}\omega^2\frac{\sin^2(\omega t)}{1+\cos(\omega t)}\,dF(\omega)\right)
    \left(\int_{-\infty}^{\infty}(1+\cos(\omega t))\,dF(\omega)\right).
\end{equation}
A short calculation shows that
\begin{equation}
    \frac{\sin^2(\omega t)}{1+\cos(\omega t)}
    =\frac{4\sin^2\bigl(\frac{\omega t}{2}\bigr)\cos^2\bigl(\frac{\omega t}{2}\bigr)}
    {2\cos^2\bigl(\frac{\omega t}{2}\bigr)}
    =2\sin^2\Bigl(\frac{\omega t}{2}\Bigr) = 1 - \cos\left(\omega t\right).
\end{equation}
Therefore,
\begin{equation}
    p^2\le \left(\int_{-\infty}^{\infty}\omega^2\left(1 - \cos\left(\omega t\right)\right)dF(\omega)\right)\left(\int_{-\infty}^{\infty}(1+\cos(\omega t))\,dF(\omega)\right),
\end{equation}
or,
\begin{equation}
    p^2 \leq \left(q_0-q\right) \left(r_0+r\right).
\end{equation}
Since equality in the Cauchy--Schwarz inequality can occur only if the integrands are proportional, which is excluded for a nondegenerate spectral measure, i.e., a spectral measure not concentrated on just a finite set of points, we have,
\begin{equation}
    p^2-(q_0-q)(r+r_0)<0.
\end{equation}
Further, since $r+r_0>0\; \forall \;t \geq 0$, we have,
\begin{equation}
 -\frac{r+r_0}{2\left(p^2+(q-q_0)(r+r_0)\right)} > 0 
\end{equation}
Applying Cauchy--Schwarz, we can also write,
\begin{equation}
\begin{split}
    p^2 &\leq \left(\int_{-\infty}^{\infty}\omega^2\frac{\sin^2(\omega t)}{1-\cos(\omega t)}\,dF(\omega)\right)
    \left(\int_{-\infty}^{\infty}(1-\cos(\omega t))\,dF(\omega)\right) \\
    p^2 &\leq \left(\int_{-\infty}^{\infty}\omega^2\left(1+\cos(\omega t)\right)\,dF(\omega)\right)
    \left(\int_{-\infty}^{\infty}(1-\cos(\omega t))\,dF(\omega)\right) \\
    p^2 &\leq \left(q_0+q\right)\left(r_0-r\right) \\
\end{split}
\end{equation}
Since the spectral measure is nondegenerate, strict inequality follows,
\begin{equation}
    p^2 -\left(q_0+q\right)\left(r_0-r\right) < 0
\end{equation}
Further, since $r_0-r>0\; \forall \;t \geq 0$, we have,
\begin{equation}
 -\frac{r_0-r}{2\left(p^2+(q+q_0)(r-r_0)\right)} > 0 
\end{equation}
\end{proof}

\begin{theorem} \label{thrm:integral_main}
Let $\alpha > 0$, $\beta > 0$, and $\gamma \in \R$. The following result holds:
\begin{equation}
\begin{split}
 \int_{-\infty}^\infty \int_{|x|}^\infty (y^2-x^2) \, e^{-\alpha(x-\gamma)^2-\beta y^2} \dd y \, \dd x =& \frac{e^{-\alpha \gamma^2}}{2\alpha\beta} \left(1 + \sqrt{\pi} \, \gamma \sqrt{\alpha + \beta} \, e^{\frac{\alpha^2 \gamma^2}{\alpha + \beta}} \, \Erf\left(\frac{\alpha \gamma}{\sqrt{\alpha + \beta}}\right)\right) \\&
    + \pi \left(\frac{\alpha - \beta - 2\alpha\beta\gamma^2}{(\alpha\beta)^{3/2}}\right) \Ot\left(\gamma\sqrt{\frac{2\alpha\beta}{\alpha+\beta}}, \sqrt{\frac{\alpha}{\beta}}\right)
\end{split} \label{eq:main_theorem_equation}
\end{equation}
where $\Erf(z)$ is the error function defined as
\begin{equation}
\Erf(z) = \frac{2}{\sqrt{\pi}} \int_0^z e^{-t^2}  \dd t, \label{eq:erf_defn_appendix}
\end{equation}
and $\Ot(h, a)$ is Owen's T function \cite{owen1980table} defined as
\begin{equation}
\Ot(h, a) = \frac{1}{2\pi} \int_0^a \frac{e^{-\frac{1}{2}h^2(1+t^2)}}{1+t^2} \, \dd t. \label{eq:Ot_defn_appendix}
\end{equation}
\end{theorem}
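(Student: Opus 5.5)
We evaluate the left-hand side of \eqref{eq:main_theorem_equation} by doing the inner $y$-integral in closed form and then reducing the remaining $x$-integral to error-function terms plus one genuinely two-dimensional Gaussian integral, which we identify as Owen's $T$ function. Write $J(\gamma)$ for the left-hand side. For the inner integral we use
\begin{equation*}
\int_{a}^\infty y^2 e^{-\beta y^2}\dd y = \frac{a e^{-\beta a^2}}{2\beta} + \frac{1}{2\beta}\int_a^\infty e^{-\beta y^2}\dd y ,
\end{equation*}
obtained by integration by parts with $a=|x|$. Hence
\begin{equation*}
\int_{|x|}^\infty (y^2-x^2)e^{-\beta y^2}\dd y = \frac{|x|e^{-\beta x^2}}{2\beta} + \Bigl(\frac{1}{2\beta}-x^2\Bigr)\int_{|x|}^\infty e^{-\beta y^2}\dd y .
\end{equation*}
Multiplying by $e^{-\alpha(x-\gamma)^2}$ and integrating over $x$ splits $J$ into two pieces. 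The first is $A=\frac{1}{2\beta}\int |x| e^{-\alpha(x-\gamma)^2-\beta x^2}\dd x$. The second is $B$, the remaining term, which still contains the complementary error function.

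The term $A$ is elementary. We complete the square:
\begin{equation*}
\alpha(x-\gamma)^2+\beta x^2=(\alpha+\beta)\bigl(x-\tfrac{\alpha\gamma}{\alpha+\beta}\bigr)^2+\tfrac{\alpha\beta\gamma^2}{\alpha+\beta}.
\end{equation*}
Then $\int|x|(\cdot)$ over a shifted Gaussian gives an exponential term plus the shift times an $\Erf$. This is exactly where the factor
\begin{equation*}
e^{-\alpha\gamma^2}\bigl(1+\sqrt{\pi}\gamma\sqrt{\alpha+\beta}\,e^{\alpha^2\gamma^2/(\alpha+\beta)}\Erf(\alpha\gamma/\sqrt{\alpha+\beta})\bigr)
\end{equation*}
should come from, noting that $\alpha\gamma^2-\alpha^2\gamma^2/(\alpha+\beta)=\alpha\beta\gamma^2/(\alpha+\beta)$.

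For $B$ we avoid integrating the error function directly. We differentiate with respect to the parameters instead. The factor $x^2 e^{-\alpha(x-\gamma)^2}$ can be generated from $e^{-\alpha(x-\gamma)^2}$ by $\partial_\alpha$ and $\partial_\gamma$, because
\begin{equation*}
-\partial_\alpha e^{-\alpha(x-\gamma)^2}=(x-\gamma)^2e^{-\alpha(x-\gamma)^2}, \qquad (x-\gamma)^2=x^2-2\gamma x+\gamma^2,
\end{equation*}
and $\partial_\gamma$ produces the factor $x-\gamma$. So everything reduces to the base integral
\begin{equation*}
K(\alpha,\beta,\gamma)=\int_{-\infty}^\infty\int_{|x|}^\infty e^{-\alpha(x-\gamma)^2-\beta y^2}\dd y\,\dd x
\end{equation*}
and its first derivatives. $K$ is a Gaussian probability of the wedge $\{y>|x|\}$. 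After the scalings $x=\xi/\sqrt{2\alpha}$ and $y=\eta/\sqrt{2\beta}$ it becomes the bivariate-normal probability of the region between two lines through the origin, for a standard normal vector with mean shifted along $\xi$ by $\gamma\sqrt{2\alpha}$. Splitting the region into the two half-wedges $x>0$ and $x<0$, each probability of an angular sector with an offset mean is expressible through Owen's $T$ evaluated at $h=\gamma\sqrt{2\alpha\beta/(\alpha+\beta)}$, the distance from the mean to the boundary line $y=|x|$ in these coordinates, and $a=\sqrt{\alpha/\beta}$. The relevant identities are $T(h,a)+T(ah,1/a)=\tfrac12\Phi(h)+\tfrac12\Phi(ah)-\Phi(h)\Phi(ah)$ together with the sector-probability formula. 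Alternatively we differentiate $K$ in $\gamma$ under the integral, where the $y$-integral collapses to Gaussians times $\Erf$, then integrate back in $\gamma$ using $\partial_h T(h,a)=-\tfrac{1}{\sqrt{2\pi}}e^{-h^2/2}\,\tfrac12\Erf(ah/\sqrt2)$ and fix the constant at $\gamma=0$, where $T(0,a)=\arctan(a)/(2\pi)$.

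The final step is bookkeeping. We assemble $A$ and the parameter derivatives of $K$, using $\partial_h T$ and $\partial_a T=e^{-h^2(1+a^2)/2}/(2\pi(1+a^2))$. The $\Erf$ and exponential pieces produced by these derivatives should combine with $A$ into the first bracket, and the surviving $T$ terms should carry the coefficient $\pi(\alpha-\beta-2\alpha\beta\gamma^2)/(\alpha\beta)^{3/2}$. We will check the result against the $u=0$ limit of Appendix~\ref{section:special_formulae_mean}, which is the known Steinberg formula, and against $\gamma\to\pm\infty$ asymptotics.

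The main obstacle is the closed form of $K$, or of $\partial_\gamma K$, in terms of a single Owen $T$ with the stated arguments. Getting the correct $h$ and $a$ and the right $\Phi$ cross terms, so that they cancel against the $\Erf$ contributions from $A$, is where errors are most likely.
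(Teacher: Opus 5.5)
Your route is correct, and it differs from the paper's in how the polynomial weight is handled.

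\textbf{What the paper does.} It performs the same inner $y$-integration and evaluates your term $A$ by the same completed square. For the $\Erfc$ part, however, it substitutes $k=\sqrt{\beta}\,x$, splits $\Erfc=1-\Erf$, folds onto $[0,\infty)$, and integrates by parts against an explicit antiderivative of $(2k^2-1)$ times two shifted Gaussians. What survives is an elementary Gaussian integral plus $\int_0^\infty e^{-k^2}\bigl(\Erf(bk+c)+\Erf(bk-c)\bigr)\dd k=4\sqrt{\pi}\,\Ot(h,a)$, which is taken from Owen's table. Here $h=\gamma\sqrt{2\alpha\beta/(\alpha+\beta)}$ and $a=\sqrt{\alpha/\beta}$.

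\textbf{Your base integral is the same object.} One has $K=\frac{2\pi}{\sqrt{\alpha\beta}}\,\Ot(h,a)$, and the wedge probability is exactly $2\Ot(h,a)$ with no leftover $\Phi$ terms. So the sector and reflection identities you list are not needed. Your $\gamma$-derivative argument proves the closed form cleanly, provided you do the $x$-integral over $(-y,y)$ first, where it is exact. This gives $\partial_\gamma K=-\sqrt{\pi/(\alpha+\beta)}\,e^{-\alpha\beta\gamma^2/(\alpha+\beta)}\Erf\bigl(\alpha\gamma/\sqrt{\alpha+\beta}\bigr)$. That matches the chain rule applied to $\frac{2\pi}{\sqrt{\alpha\beta}}\Ot(h,a)$, and the boundary value $K(0)=\arctan(\sqrt{\alpha/\beta})/\sqrt{\alpha\beta}$ comes from the sector angle.

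\textbf{The bookkeeping closes as you predict.} Write $\int\!\!\int_{y>|x|}x^2e^{-\alpha(x-\gamma)^2-\beta y^2}=-\partial_\alpha K+\frac{\gamma}{\alpha}\partial_\gamma K+\gamma^2K$. Then the $\Ot$ coefficient is $\bigl(\frac{1}{2\beta}-\frac{1}{2\alpha}-\gamma^2\bigr)\frac{2\pi}{\sqrt{\alpha\beta}}$, which is exactly the stated one. The $\partial_a\Ot$ and $\partial_h\Ot$ terms give $\frac{e^{-\alpha\gamma^2}}{2\alpha(\alpha+\beta)}$ and $\frac{\sqrt{\pi}\,\gamma(2\alpha+\beta)}{2\alpha(\alpha+\beta)^{3/2}}e^{-\alpha\beta\gamma^2/(\alpha+\beta)}\Erf\bigl(\alpha\gamma/\sqrt{\alpha+\beta}\bigr)$. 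This is precisely what the paper's $I_{2221}$ contributes. These terms combine with $A$, rather than cancel against it, via $\frac{1}{2\beta(\alpha+\beta)}+\frac{1}{2\alpha(\alpha+\beta)}=\frac{1}{2\alpha\beta}$ and $\alpha^2+\beta(2\alpha+\beta)=(\alpha+\beta)^2$.

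\textbf{What each route buys.} Yours gives a structural explanation of the $\Ot$ coefficient, a probabilistic reading of the result, and freedom from the heavy antiderivative and its boundary terms. You could even skip the inner integration, since the whole integral equals $\partial_\alpha K-\partial_\beta K-\frac{\gamma}{\alpha}\partial_\gamma K-\gamma^2K$. The paper's route needs only one tabulated integral and elementary calculus. Yours relies on the derivative formulas for $\Ot$ and on differentiating under the integral sign, which is harmless here by Gaussian domination.
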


\begin{proof}
Denoting the integral that we have to evaluate by,
\begin{equation}
    I = \int_{-\infty}^\infty \int_{|x|}^\infty (y^2-x^2) \, e^{-\alpha(x-\gamma)^2-\beta y^2} \dd y \, \dd x
\end{equation}
Integrating the inner integral in $y$ and using the result from Lemma~\ref{lemma:inner_integral}, we have,
\begin{equation}
\begin{split}
    I &= \int_{-\infty}^\infty \frac{1}{2 \beta}  |x|\, e^{ -\alpha (x - \gamma)^{2} -\beta x^{2}} - \frac{\sqrt{\pi}}{4 \beta^{3/2}} \left(2 \beta x^2 - 1\right) e^{ -\alpha (x - \gamma)^{2} } \Erfc\left(\sqrt{\beta}|x|\right) \dd x \\
    &= \frac{1}{2 \beta} \underbrace{\int_{-\infty}^\infty |x| \, e^{ -\alpha (x - \gamma)^{2} -\beta x^{2}} \dd x}_{\text{\normalsize \(I_1\)}}
    - \frac{\sqrt{\pi}}{4 \beta^{3/2}} \underbrace{\int_{-\infty}^\infty \left(2 \beta x^2 - 1\right) e^{ -\alpha (x - \gamma)^{2} } \Erfc\left(\sqrt{\beta}|x|\right) \dd x}_{\text{\normalsize \(I_2\)}}
\end{split} \label{eq:I_defn}
\end{equation}
$I_1$ is shown to be given by the following expression in Lemma~\ref{lemma:I1},
\begin{equation}
    I_1 = \frac{e^{-\alpha  \gamma ^2}}{(\alpha +\beta )^{3/2}} \left(\sqrt{\alpha +\beta } + \sqrt{\pi } \, \alpha  \gamma \, e^{\frac{\alpha ^2 \gamma ^2}{\alpha +\beta }} \Erf \left(\frac{\alpha  \gamma }{\sqrt{\alpha +\beta }}\right)\right)
\end{equation}
We make the variable change of $\sqrt{\beta}\, x = k$ in the integral $I_2$ which gives us,
\begin{equation}
\begin{split}
    I_2 &= \frac{1}{\sqrt{\beta}} \int_{-\infty}^\infty \left(2 k^2 - 1\right) e^{ -\frac{\alpha}{\beta} (k - \gamma\sqrt{\beta})^{2} } \Erfc\left(|k|\right) \dd k \\
    &= \frac{1}{\sqrt{\beta}} \underbrace{\int_{-\infty}^\infty \left(2 k^2 - 1\right) e^{ -\frac{\alpha}{\beta} (k - \gamma\sqrt{\beta})^{2}} \dd k}_{\text{\normalsize \(I_{21}\)}} - \frac{1}{\sqrt{\beta}} \underbrace{\int_{-\infty}^\infty \left(2 k^2 - 1\right) e^{ -\frac{\alpha}{\beta} (k - \gamma\sqrt{\beta})^{2} } \Erf\left(|k|\right) \dd k}_{\text{\normalsize \(I_{22}\)}}
\end{split} \label{eq:I2_defn}
\end{equation}
The integral in $I_{21}$ is given by Lemma~\ref{lemma:I21}, upon simplification, we get,
\begin{equation}
    I_{21} = \sqrt{\pi} \sqrt{\frac{\beta}{\alpha}} \left(\frac{\beta}{\alpha} + 2 \beta \gamma^2 - 1 \right)
\end{equation}
and $I_{22}$ can be written as,
\begin{equation}
\begin{split}
    I_{22} &= \int_{-\infty}^\infty \left(2 k^2 - 1\right) e^{ -\frac{\alpha}{\beta} (k - \gamma\sqrt{\beta})^{2} } \Erf\left(|k|\right) \dd k \\
    &= \int_{0}^\infty e^{ -\frac{\alpha}{\beta} (k - \gamma\sqrt{\beta})^{2} } \left(2 k^2 - 1\right)\, \Erf\left(k\right) \dd k + \int_{-\infty}^0 e^{ -\frac{\alpha}{\beta} (k - \gamma\sqrt{\beta})^{2} } \left(2 k^2 - 1\right)\, \Erf\left(-k\right) \dd k \\
    &= \int_{0}^\infty e^{ -\frac{\alpha}{\beta} (k - \gamma\sqrt{\beta})^{2} } \left(2 k^2 - 1\right)\, \Erf\left(k\right) \dd k + \int_{0}^\infty e^{ -\frac{\alpha}{\beta} (k + \gamma\sqrt{\beta})^{2} } \left(2 k^2 - 1\right)\, \Erf\left(k\right) \dd k \\
    &= \int_{0}^\infty \left(e^{ -\frac{\alpha}{\beta} (k - \gamma\sqrt{\beta})^{2}} + e^{ -\frac{\alpha}{\beta} (k + \gamma\sqrt{\beta})^{2}} \right)\left(2 k^2 - 1\right)\, \Erf\left(k\right) \dd k 
\end{split}
\end{equation}
Applying integration by parts, we get
\begin{equation}
\begin{split}
    I_{22} =& \int_{0}^\infty \underbrace{\left(e^{ -\frac{\alpha}{\beta} (k - \gamma\sqrt{\beta})^{2}} + e^{ -\frac{\alpha}{\beta} (k + \gamma\sqrt{\beta})^{2}} \right)\left(2 k^2 - 1\right)}_{\text{\normalsize second part}}\, \underbrace{\Erf\left(k\right)}_{\text{\normalsize first part}} \dd k \\
    =& \underbrace{\left[\Erf\left(k\right) \int \left(e^{ -\frac{\alpha}{\beta} (k - \gamma\sqrt{\beta})^{2}} + e^{ -\frac{\alpha}{\beta} (k + \gamma\sqrt{\beta})^{2}} \right)\left(2 k^2 - 1\right) \dd k \right]_0^\infty}_{\text{\normalsize \(I_{221}\)}} \\& - \underbrace{\int_{0}^\infty \left(\frac{2}{\sqrt{\pi}} e^{-k^2}\right) \left(\int \left(e^{ -\frac{\alpha}{\beta} (k - \gamma\sqrt{\beta})^{2}} + e^{ -\frac{\alpha}{\beta} (k + \gamma\sqrt{\beta})^{2}} \right)\left(2 k^2 - 1\right) \dd k\right) \dd k}_{\text{\normalsize \(I_{222}\)}}
\end{split} \label{eq:I21_by_parts}
\end{equation}
First, the indefinite integral can be evaluated using the standard results on the normal integrals \cite{owen1980table},
\begin{equation}
\begin{split}
    \int &\biggl(e^{ -\frac{\alpha}{\beta}  (k - \gamma\sqrt{\beta})^{2}} +  e^{ -\frac{\alpha}{\beta} (k + \gamma\sqrt{\beta})^{2}} \biggr)  \left(2 k^2 - 1\right) \dd k = \frac{\beta}{\alpha} e^{-\frac{\alpha}{\beta}  \left(k + \gamma\sqrt{\beta } \right)^2} \left( \gamma\sqrt{\beta } -\left(k + \gamma\sqrt{\beta } \right) e^{\frac{4 \alpha  \gamma }{\sqrt{\beta }} k} - k\right) \\& + \frac{\sqrt{\pi}}{2\alpha} \sqrt{\frac{\beta }{\alpha }} \left(\beta - \alpha  +2 \alpha\beta \gamma ^2 \right) \left(\Erf\left(\sqrt{\frac{\alpha}{\beta}}\left(k+\gamma\sqrt{\beta}\right)\right)+\Erf\left(\sqrt{\frac{\alpha}{\beta}}\left(k-\gamma\sqrt{\beta}\right) \right)\right)
\end{split}
\label{eq:integral_vk}
\raisetag{30pt}
\end{equation}
Let us denote the integral above by $v(k)$. We have the following:
\begin{equation}
    \biggl[\Erf(k) \int \biggl(e^{ -\frac{\alpha}{\beta} (k - \gamma\sqrt{\beta})^{2}} + e^{ -\frac{\alpha}{\beta} (k + \gamma\sqrt{\beta})^{2}} \biggr)\left(2 k^2 - 1\right) \dd k \biggr]_0^\infty = \lim_{k \rightarrow \infty} \Erf(k)\, v(k) - \lim_{k \rightarrow 0} \Erf(k) \,v(k)
\end{equation}
Using the fact that $\lim_{k \rightarrow \infty} \Erf(k) = 1$ and $\lim_{k \rightarrow \infty} \left(k\,e^{-a \,k}\right) = 0$, we have,
\begin{equation}
    \lim_{k \rightarrow \infty} \Erf(k) \, v(k) = \frac{\sqrt{\pi}}{\alpha} \sqrt{\frac{\beta }{\alpha }} \left(\beta - \alpha  +2 \alpha\beta \gamma ^2 \right),
\end{equation}
and using the fact that $\lim_{k \rightarrow 0} \Erf(k) = 0$, we have,
\begin{equation}
    \lim_{k \rightarrow 0} \Erf(k) \, v(k) = 0.
\end{equation}
Therefore, the first term of $I_{22}$ in Eq.~\eqref{eq:I21_by_parts} is,
\begin{equation}
    I_{221} = \biggl[\Erf(k) \int \biggl(e^{ -\frac{\alpha}{\beta} (k - \gamma\sqrt{\beta})^{2}} + e^{ -\frac{\alpha}{\beta} (k + \gamma\sqrt{\beta})^{2}} \biggr)\left(2 k^2 - 1\right) \dd k \biggr]_0^\infty = \frac{\sqrt{\pi}}{\alpha} \sqrt{\frac{\beta }{\alpha }} \left(\beta - \alpha  +2 \alpha\beta \gamma ^2 \right)
\end{equation}
Using the integral in Eq.~\eqref{eq:integral_vk}, we can write $I_{222}$ as the following definite integral,
\begin{equation}
\begin{split}
    I_{222} =& \int_{0}^\infty \left(\frac{2}{\sqrt{\pi}} e^{-k^2}\right) \biggl(\frac{\beta}{\alpha} e^{-\frac{\alpha}{\beta}  \left(k + \gamma\sqrt{\beta } \right)^2} \left( \gamma\sqrt{\beta } -\left(k + \gamma\sqrt{\beta } \right) e^{\frac{4 \alpha  \gamma }{\sqrt{\beta }} k} - k\right) \\&+ \frac{\sqrt{\pi}}{2\alpha} \sqrt{\frac{\beta }{\alpha }} \left(\beta - \alpha  + 2 \alpha\beta \gamma ^2 \right) \left(\Erf\left(\sqrt{\frac{\alpha}{\beta}}\left(k+\gamma\sqrt{\beta}\right)\right)+\Erf\left(\sqrt{\frac{\alpha}{\beta}}\left(k-\gamma\sqrt{\beta}\right) \right)\right)\biggr) \,\dd k 
\end{split} 
\end{equation}
Upon expanding, we get,
\begin{equation}
\begin{split}
    I_{222} =& \frac{2}{\sqrt{\pi}}  \frac{\beta}{\alpha}  \underbrace{\int_{0}^\infty e^{-\frac{\alpha}{\beta}  \left(k + \gamma\sqrt{\beta } \right)^2 - k^2} \left( \gamma\sqrt{\beta } -\left(k + \gamma\sqrt{\beta } \right) e^{\frac{4 \alpha  \gamma }{\sqrt{\beta }} k} - k\right)  \dd k}_{\text{\normalsize \(I_{2221}\)}} \\
    &+ \frac{1}{\alpha} \sqrt{\frac{\beta }{\alpha }} \left(\beta - \alpha  + 2 \alpha\beta \gamma ^2 \right) \underbrace{ \int_{0}^\infty e^{-k^2}\left(\Erf\left(\sqrt{\frac{\alpha}{\beta}}\left(k+\gamma\sqrt{\beta}\right)\right)+\Erf\left(\sqrt{\frac{\alpha}{\beta}}\left(k-\gamma\sqrt{\beta}\right) \right)\right) \,\dd k }_{\text{\normalsize \(I_{2222}\)}}
\end{split} \label{eq:I222_defn}
\end{equation}
Using Lemma~\ref{lemma:I2221}, we have the following expression for $I_{2221}$,
\begin{equation}
    I_{2221} = -\frac{\beta \, e^{-\alpha  \gamma ^2} }{(\alpha +\beta )^{3/2}} \biggl(\sqrt{\alpha +\beta } +\sqrt{\pi } \,\gamma \, (2 \alpha +\beta ) \, e^{\frac{\alpha ^2 \gamma ^2}{\alpha +\beta }} \Erf\left(\frac{\alpha  \gamma }{\sqrt{\alpha +\beta }}\right)\biggr).
\end{equation}
Finally, using Lemma~\ref{lemma:I2222}, we have the following expression for $I_{2222}$,
\begin{equation}
    I_{2222} = 4 \sqrt{\pi} \,\Ot\left(\sqrt{\frac{2 \alpha \beta}{\alpha + \beta}} \, \gamma, \sqrt{\frac{\alpha}{\beta}}\right)
\end{equation}

Now, we go backward to find a simplified expression for $I$. First, we find $I_{222}$ using the definition in Eq.~\eqref{eq:I222_defn},
\begin{equation}
\begin{split}
    I_{222} =& \frac{2}{\sqrt{\pi}}  \frac{\beta}{\alpha} I_{2221} + \frac{1}{\alpha} \sqrt{\frac{\beta }{\alpha }} \left(\beta - \alpha  + 2 \alpha\beta \gamma ^2 \right) I_{2222} \\
    =& -\frac{2 \beta^2 \, e^{-\alpha  \gamma ^2} }{\sqrt{\pi} \, \alpha \, (\alpha +\beta )^{3/2}} \biggl(\sqrt{\alpha +\beta } +\sqrt{\pi } \,\gamma \, (2 \alpha +\beta ) \, e^{\frac{\alpha ^2 \gamma ^2}{\alpha +\beta }} \Erf\left(\frac{\alpha  \gamma }{\sqrt{\alpha +\beta }}\right)\biggr) \\&+ \frac{4 \sqrt{\pi}}{\alpha} \sqrt{\frac{\beta }{\alpha }} \left(\beta - \alpha  + 2 \alpha\beta \gamma ^2 \right) \Ot\left(\sqrt{\frac{2 \alpha \beta}{\alpha + \beta}} \, \gamma, \sqrt{\frac{\alpha}{\beta}}\right)
\end{split}
\end{equation}
Next, we find $I_{22}$ using the definition in Eq.~\eqref{eq:I21_by_parts},
\begin{equation}
\begin{split}
    I_{22} =& I_{221} - I_{222} \\
    =& \frac{\sqrt{\pi}}{\alpha} \sqrt{\frac{\beta }{\alpha }} \left(\beta - \alpha  +2 \alpha\beta \gamma ^2 \right) + \frac{2 \beta^2 \, e^{-\alpha  \gamma ^2} }{\sqrt{\pi} \, \alpha \, (\alpha +\beta )^{3/2}} \biggl(\sqrt{\alpha +\beta } +\sqrt{\pi } \,\gamma \, (2 \alpha +\beta ) \, e^{\frac{\alpha ^2 \gamma ^2}{\alpha +\beta }} \Erf\left(\frac{\alpha  \gamma }{\sqrt{\alpha +\beta }}\right)\biggr) \\&- \frac{4 \sqrt{\pi}}{\alpha} \sqrt{\frac{\beta }{\alpha }} \left(\beta - \alpha  + 2 \alpha\beta \gamma ^2 \right) \Ot\left(\sqrt{\frac{2 \alpha \beta}{\alpha + \beta}} \, \gamma, \sqrt{\frac{\alpha}{\beta}}\right)
\end{split}
\raisetag{30pt}
\end{equation}
Using the definition in Eq.~\eqref{eq:I2_defn}, $I_2$ is given by,
\begin{equation}
\begin{split}
    I_2 =& \frac{1}{\sqrt{\beta}} \left(I_{21} - I_{22}\right) \\
    =& \cancel{\sqrt{\frac{\pi}{\alpha}} \left(\frac{\beta}{\alpha} + 2 \beta \gamma^2 - 1 \right)} - \cancel{\frac{\sqrt{\pi}}{\alpha^{3/2}}  \left(\beta - \alpha  + 2 \alpha\beta \gamma ^2 \right)} + \frac{4 \sqrt{\pi}}{\alpha^{3/2}}  \left(\beta - \alpha  + 2 \alpha\beta \gamma ^2 \right) \Ot\left(\sqrt{\frac{2 \alpha \beta}{\alpha + \beta}} \, \gamma, \sqrt{\frac{\alpha}{\beta}}\right) \\
    &-\frac{2 \beta^{3/2} \, e^{-\alpha  \gamma ^2} }{\sqrt{\pi} \, \alpha \, (\alpha +\beta )^{3/2}} \biggl(\sqrt{\alpha +\beta } +\sqrt{\pi } \,\gamma \, (2 \alpha +\beta ) \, e^{\frac{\alpha ^2 \gamma ^2}{\alpha +\beta }} \Erf\left(\frac{\alpha  \gamma }{\sqrt{\alpha +\beta }}\right)\biggr) 
\end{split}
\raisetag{30pt}
\end{equation}
Finally, using the definition in Eq.~\eqref{eq:I_defn}, we get the expression for $I$,
\begin{equation}
\begin{split}
    I =&  \frac{1}{2\beta} I_1 - \frac{\sqrt{\pi}}{4\beta^{3/2}} I_2 \\
    =& \frac{e^{-\alpha  \gamma ^2}}{2 \beta \, (\alpha +\beta )^{3/2}} \left(\sqrt{\alpha +\beta } + \sqrt{\pi } \, \alpha  \gamma \, e^{\frac{\alpha ^2 \gamma ^2}{\alpha +\beta }} \Erf \left(\frac{\alpha  \gamma }{\sqrt{\alpha +\beta }}\right)\right) \\&- \frac{\pi}{(\alpha\beta)^{3/2}}  \left(\beta - \alpha  + 2 \alpha\beta \gamma ^2 \right) \Ot\left(\sqrt{\frac{2 \alpha \beta}{\alpha + \beta}} \, \gamma, \sqrt{\frac{\alpha}{\beta}}\right) \\
    &+ \frac{ \, e^{-\alpha  \gamma ^2} }{2 \alpha \, (\alpha +\beta )^{3/2}} \biggl(\sqrt{\alpha +\beta } +\sqrt{\pi } \,\gamma \, (2 \alpha +\beta ) \, e^{\frac{\alpha ^2 \gamma ^2}{\alpha +\beta }} \Erf\left(\frac{\alpha  \gamma }{\sqrt{\alpha +\beta }}\right)\biggr)
\end{split}
\end{equation}
Simplifying the expression above gives us the final result,
\begin{equation}
    I = \frac{e^{-\alpha \gamma^2}}{2\alpha\beta} \left(1 + \sqrt{\pi}  \gamma \sqrt{\alpha + \beta}   e^{\frac{\alpha^2 \gamma^2}{\alpha + \beta}}  \Erf\left(\frac{\alpha \gamma}{\sqrt{\alpha + \beta}}\right)\right) + \pi \left(\frac{\alpha - \beta - 2\alpha\beta\gamma^2}{(\alpha\beta)^{3/2}}\right) \Ot\left(\gamma\sqrt{\frac{2\alpha\beta}{\alpha+\beta}}, \sqrt{\frac{\alpha}{\beta}}\right)
\end{equation}

\end{proof}

\begin{corollary}[to Theorem.~\ref{thrm:integral_main}]
Let $\alpha > 0$ and $\beta > 0$. The following result holds:
\begin{equation}
 \int_{-\infty}^\infty \int_{|x|}^\infty (y^2-x^2)\, e^{-\alpha x^2-\beta y^2} \dd y \, \dd x = \frac{\sqrt{\alpha\beta} + \left(\alpha - \beta\right) \, \arctan\left( \sqrt{\alpha/\beta} \right) }{2 (\alpha\beta)^{3/2}} 
\end{equation}
\end{corollary}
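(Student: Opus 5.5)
The corollary is the special case $\gamma=0$ of Theorem~\ref{thrm:integral_main}, so I would specialize that formula rather than recompute the integral. Since $\alpha>0$, $\beta>0$ and $\gamma=0\in\R$, the theorem's hypotheses hold. Setting $\gamma=0$ collapses each piece of the right-hand side of Eq.~\eqref{eq:main_theorem_equation}:
\begin{itemize}
\item $e^{-\alpha\gamma^2}=1$;
\item the error-function term carries a factor $\gamma$, so it vanishes;
\item the coefficient $\alpha-\beta-2\alpha\beta\gamma^2$ reduces to $\alpha-\beta$;
\item Owen's function is evaluated at $h=0$.
\end{itemize}
This leaves
\begin{equation*}
\frac{1}{2\alpha\beta}+\pi\,\frac{\alpha-\beta}{(\alpha\beta)^{3/2}}\,\Ot\bigl(0,\sqrt{\alpha/\beta}\bigr).
\end{equation*}

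Next I would evaluate $\Ot(0,a)$ directly from the definition in Eq.~\eqref{eq:Ot_defn_appendix}. At $h=0$ the exponential is $1$, so
\begin{equation*}
\Ot(0,a)=\frac{1}{2\pi}\int_0^a\frac{\dd t}{1+t^2}=\frac{\arctan a}{2\pi}.
\end{equation*}
This is the same identity used in Appendix~\ref{section:special_formulae_mean}. Substituting $a=\sqrt{\alpha/\beta}$, the second term becomes $(\alpha-\beta)\arctan(\sqrt{\alpha/\beta})/\bigl(2(\alpha\beta)^{3/2}\bigr)$.

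Finally I would put both terms over the common denominator $2(\alpha\beta)^{3/2}$. This uses $1/(2\alpha\beta)=\sqrt{\alpha\beta}/\bigl(2(\alpha\beta)^{3/2}\bigr)$, and it gives exactly the stated right-hand side.

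I expect no real obstacle, provided Theorem~\ref{thrm:integral_main} holds including its endpoint behaviour at $\gamma=0$. That behaviour is clear, because every expression in the theorem is continuous in $\gamma$ and the derivation never divided by $\gamma$. As an independent check, I would verify the formula directly with polar coordinates $x=\rho\cos\theta$, $y=\rho\sin\theta$. The region $y>|x|$ becomes $\theta\in(\pi/4,3\pi/4)$, and the radial integral is
\begin{equation*}
\int_0^\infty \rho^3 e^{-\rho^2 c(\theta)}\,\dd\rho=\frac{1}{2c(\theta)^2}, \qquad c(\theta)=\alpha\cos^2\theta+\beta\sin^2\theta.
\end{equation*}
This leaves the elementary angular integral
\begin{equation*}
\int_{\pi/4}^{3\pi/4}\frac{-\cos 2\theta}{2c(\theta)^2}\,\dd\theta,
\end{equation*}
which the substitution $\tan\theta=s$ reduces to rational integrals producing $\arctan\sqrt{\alpha/\beta}$. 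This route serves only as a consistency check on the constants.
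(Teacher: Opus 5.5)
Your proposal is correct and is essentially the paper's own proof: set $\gamma=0$ in Theorem~\ref{thrm:integral_main}, use $\Ot(0,a)=\arctan(a)/(2\pi)$, and combine both terms over the common denominator $2(\alpha\beta)^{3/2}$; the only difference is that you derive the identity from the definition, whereas the paper cites Owen's table. Your polar-coordinate cross-check is also sound, though substituting $s=\cot\theta$ instead of $\tan\theta$ avoids the singularity at $\theta=\pi/2$ and reduces the angular integral directly to $\int_0^1 (1-s^2)/(\alpha s^2+\beta)^2\,\dd s$.
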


\begin{proof}
The integral is a special case of the integral in Theorem~\ref{thrm:integral_main} with the parameter $\gamma=0$. Eq.~\eqref{eq:main_theorem_equation} states that,
\begin{equation}
\begin{split}
 \int_{-\infty}^\infty \int_{|x|}^\infty (y^2-x^2) \, e^{-\alpha(x-\gamma)^2-\beta y^2} \dd y \, \dd x =& \frac{e^{-\alpha \gamma^2}}{2\alpha\beta} \left(1 + \sqrt{\pi} \, \gamma \sqrt{\alpha + \beta} \, e^{\frac{\alpha^2 \gamma^2}{\alpha + \beta}} \, \Erf\left(\frac{\alpha \gamma}{\sqrt{\alpha + \beta}}\right)\right) \\&
    + \pi \left(\frac{\alpha - \beta - 2\alpha\beta\gamma^2}{(\alpha\beta)^{3/2}}\right) \Ot\left(\gamma\sqrt{\frac{2\alpha\beta}{\alpha+\beta}}, \sqrt{\frac{\alpha}{\beta}}\right).
\end{split} 
\end{equation}
Upon substituting $\gamma=0$ on both sides, we get,
\begin{equation}
  \int_{-\infty}^\infty \int_{|x|}^\infty (y^2-x^2)\, e^{-\alpha x^2-\beta y^2} \dd y \, \dd x = \frac{1}{2\alpha\beta} 
    + \pi \left(\frac{\alpha - \beta}{(\alpha\beta)^{3/2}}\right) \Ot\left(0, \sqrt{\frac{\alpha}{\beta}}\right).
\end{equation}
Since $\Ot(0,a) = \arctan\left(a\right) / (2\pi)$ \cite{owen1980table}, the integral becomes,
\begin{equation}
\begin{split}
  \int_{-\infty}^\infty \int_{|x|}^\infty (y^2-x^2)\, e^{-\alpha x^2-\beta y^2} \dd y \, \dd x &= \frac{1}{2\alpha\beta} 
    +  \left(\frac{\alpha - \beta}{2(\alpha\beta)^{3/2}}\right) \arctan\left( \sqrt{\frac{\alpha}{\beta}}\right) \\
    &=  \frac{\sqrt{\alpha\beta} + \left(\alpha - \beta\right) \, \arctan\left( \sqrt{\alpha/\beta} \right) }{2 (\alpha\beta)^{3/2}}.
\end{split}
\end{equation}
\end{proof}
\begin{figure}[t!]
    \centering
    \includegraphics[width=0.4 \linewidth]{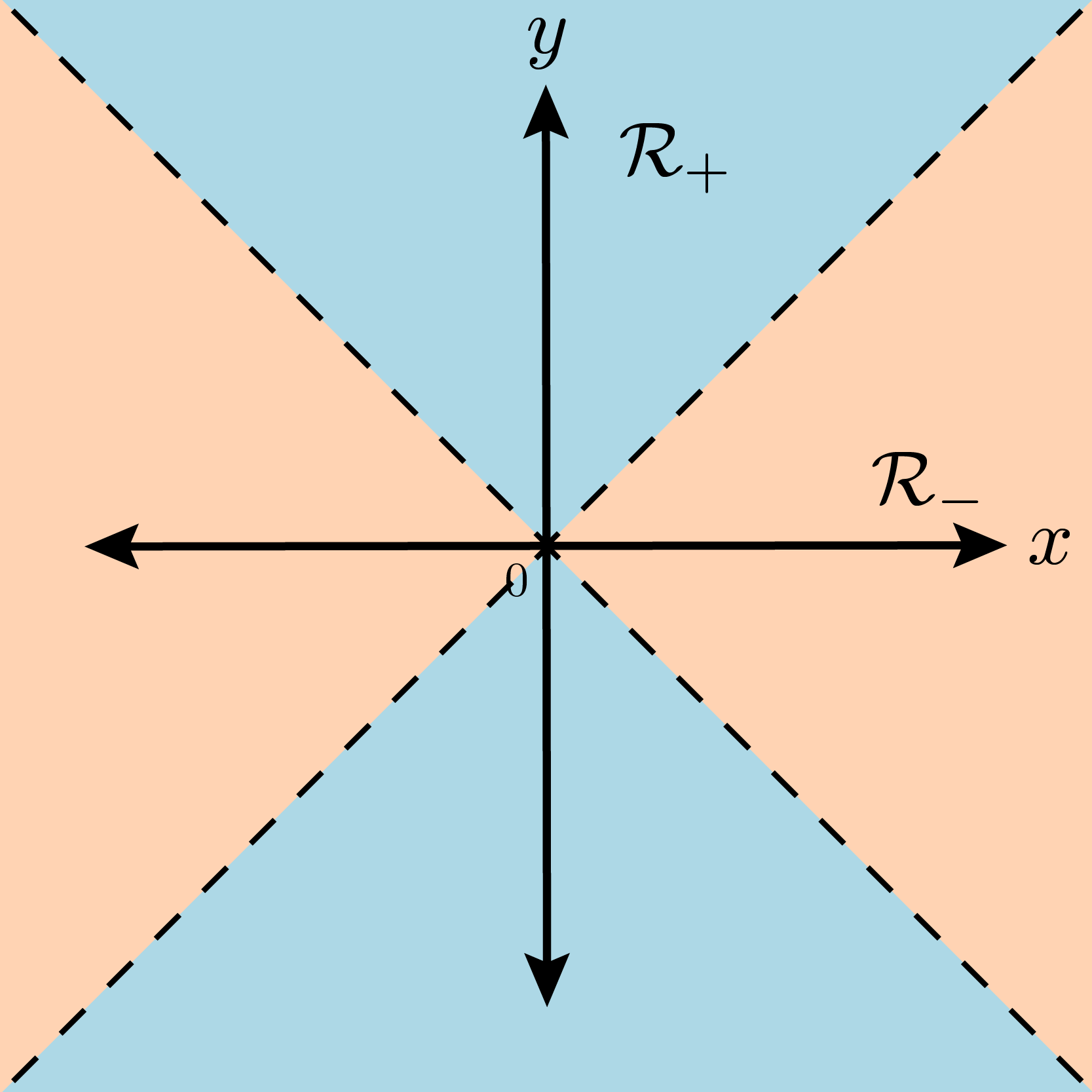}
    \caption[Splitting of $\R^2$ into two disjoint regions.]{\textbf{Splitting of $\R^2$ into two disjoint regions.} $\mathcal{R}_+$ is the region in blue satisfying $|y| \geq |x|$ and $\mathcal{R}_-$ is the region in orange satisfying $|x| > |y|$.}
    \label{fig:integration_limits}
\end{figure}
\begin{theorem} \label{thrm:integral_crossing}
Let $\alpha > 0$, $\beta > 0$, and $\gamma \in \R$. The following result holds:
\begin{equation}
\begin{split}
 \int_{-\infty}^\infty \int_{-\infty}^\infty \left|y^2-x^2\right| \, e^{-\alpha(x-\gamma)^2-\beta y^2} \dd y \, \dd x =& \frac{2e^{-\alpha \gamma^2}}{\alpha\beta} \left(1 + \sqrt{\pi} \, \gamma \sqrt{\alpha + \beta}  \, e^{\frac{\alpha^2 \gamma^2}{\alpha + \beta}}  \, \Erf\left(\frac{\alpha \gamma}{\sqrt{\alpha + \beta}}\right)\right) \\& + 4 \pi \left(\frac{\alpha - \beta - 2\alpha \beta \gamma^2}{\left(\alpha\beta\right)^{3/2}}\right) \left(\Ot\left(\gamma\sqrt{\frac{2\alpha\beta}{\alpha+\beta}}, \sqrt{\frac{\alpha}{\beta}}\right)-\frac{1}{8}\right)
\end{split} \label{eq:crossing_theorem_equation}
\end{equation}
\end{theorem}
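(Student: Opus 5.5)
We prove Theorem~\ref{thrm:integral_crossing} by reducing it to Theorem~\ref{thrm:integral_main} together with an elementary Gaussian moment. Split $\R^2$ into the two regions of Fig.~\ref{fig:integration_limits}: $\mathcal{R}_+=\{|y|\ge|x|\}$ and $\mathcal{R}_-=\{|x|>|y|\}$. On $\mathcal{R}_+$ we have $|y^2-x^2|=y^2-x^2$, and on $\mathcal{R}_-$ we have $|y^2-x^2|=-(y^2-x^2)$. Writing $W(x,y)=e^{-\alpha(x-\gamma)^2-\beta y^2}$, this gives
\begin{equation}
\int\!\!\int_{\R^2}|y^2-x^2|\,W\,\dd y\,\dd x = 2\int\!\!\int_{\mathcal{R}_+}(y^2-x^2)\,W\,\dd y\,\dd x - \int\!\!\int_{\R^2}(y^2-x^2)\,W\,\dd y\,\dd x .
\end{equation}

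\textbf{The region $\mathcal{R}_+$.} The weight $W$ is even in $y$. Hence the integral over $\mathcal{R}_+$ equals twice the integral over $\{y\ge|x|\}$, which is exactly the integral evaluated in Theorem~\ref{thrm:integral_main}. Denote that closed form by $J(\alpha,\beta,\gamma)$. The first term on the right is therefore $4J$.

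\textbf{The full-plane term.} This is a product of one-dimensional Gaussian moments. Using
\begin{equation}
\int_{\R} y^2 e^{-\beta y^2}\dd y=\tfrac{\sqrt\pi}{2}\beta^{-3/2},\qquad \int_{\R} x^2e^{-\alpha(x-\gamma)^2}\dd x=\sqrt{\pi/\alpha}\,\bigl(\tfrac{1}{2\alpha}+\gamma^2\bigr),
\end{equation}
we obtain
\begin{equation}
\int\!\!\int_{\R^2}(y^2-x^2)\,W\,\dd y\,\dd x=\frac{\pi}{\sqrt{\alpha\beta}}\Bigl(\frac{1}{2\beta}-\frac{1}{2\alpha}-\gamma^2\Bigr)=\frac{\pi(\alpha-\beta-2\alpha\beta\gamma^2)}{2(\alpha\beta)^{3/2}}.
\end{equation}

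\textbf{Assembling.} Subtract this from $4J$. The error-function part of $4J$ is $\frac{2e^{-\alpha\gamma^2}}{\alpha\beta}(1+\cdots)$, matching the first line of Eq.~\eqref{eq:crossing_theorem_equation}. The Owen's $T$ part of $4J$ is $4\pi\frac{\alpha-\beta-2\alpha\beta\gamma^2}{(\alpha\beta)^{3/2}}\,\Ot(\cdot,\cdot)$. The subtracted full-plane term equals $4\pi\frac{\alpha-\beta-2\alpha\beta\gamma^2}{(\alpha\beta)^{3/2}}\cdot\frac18$, which produces the $-\tfrac18$ inside the bracket.

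All interchanges are justified by absolute integrability, since $\alpha,\beta>0$ and the weight is Gaussian. The only points that need care are the use of $y$-evenness to reduce $\mathcal{R}_+$ to the half-region of Theorem~\ref{thrm:integral_main}, and the sign bookkeeping when splitting the absolute value. The heavy lifting is already done in Theorem~\ref{thrm:integral_main}.
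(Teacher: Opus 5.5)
Your proposal is correct and takes essentially the same approach as the paper. The paper also splits $\mathbb{R}^2$ into $\mathcal{R}_\pm$, writes the integral as $4I_0 - I'$ with $I_0$ given by Theorem~\ref{thrm:integral_main} (using evenness in $y$), and evaluates the full-plane term $I'=\frac{\pi}{2}\frac{\alpha-\beta-2\alpha\beta\gamma^2}{(\alpha\beta)^{3/2}}$ in Lemma~\ref{lemma:for_crossing}, which produces the $-\tfrac18$; your moment computations and bookkeeping are correct.
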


\begin{proof}
Let us denote the integral we have to evaluate by $I$.
\begin{equation}
    I = \int \int_{\R^2} \left|y^2-x^2\right| \, e^{-\alpha(x-\gamma)^2-\beta y^2} \dd y \, \dd x
\end{equation}
The integral is evaluated over $(x, y) \in \R^2$. We split $\R^2$ into two disjoint regions (Fig.~\ref{fig:integration_limits}), defined as follows:
\begin{itemize}
    \item $\mathcal{R}_+ : \{(x, y) : |y| \geq |x| \}$, i.e., $y^2\geq x^2$, and
    \item $\mathcal{R}_- : \{(x, y) : |x| > |y| \}$, i.e., $x^2 > y^2$.
\end{itemize}

Writing the integral over these disjoint regions gives us,
\begin{equation}
\begin{split}
    I &= \int \int_{\mathcal{R}_+ } \left|y^2-x^2\right| \, e^{-\alpha(x-\gamma)^2-\beta y^2} \dd y \, \dd x + \int \int_{\mathcal{R}_-} \left|y^2-x^2\right| \, e^{-\alpha(x-\gamma)^2-\beta y^2} \dd y \, \dd x \\
    &= \underbrace{\int \int_{\mathcal{R}_+} \left(y^2-x^2\right) \, e^{-\alpha(x-\gamma)^2-\beta y^2} \dd y \, \dd x}_{\text{\normalsize \(I_1\)}} - \underbrace{\int \int_{\mathcal{R}_-} \left(y^2-x^2\right) \, e^{-\alpha(x-\gamma)^2-\beta y^2} \dd y \, \dd x}_{\text{\normalsize \(I_2\)}}
\end{split} \label{eq:splitting_over_plus_and_minus}
\end{equation}
\begin{figure}[t!]
    \centering
    \includegraphics[width=0.35 \linewidth]{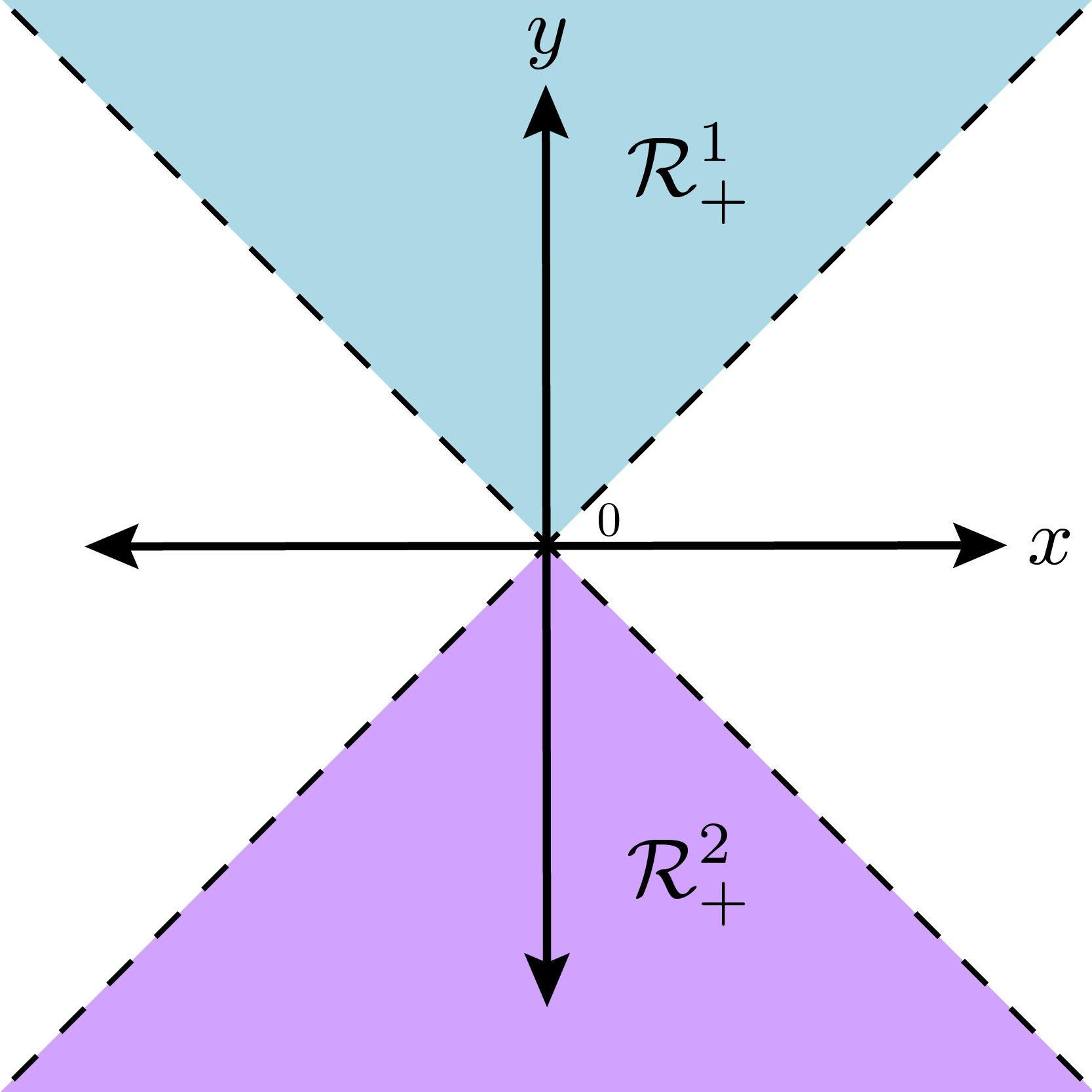}
    \caption[Splitting of $\mathcal{R}_+$ into two disjoint regions.]{\textbf{Splitting of $\mathcal{R}_+$ into two disjoint regions.} $\mathcal{R}_+^1$ is the region in blue satisfying $y \geq |x|$ and $\mathcal{R}_+^2$ is the region in purple satisfying $y \leq -|x|$.}
    \label{fig:integration_limits_updown}
\end{figure}
Therefore, $I = I_1 - I_2$. For evaluating the integral $I_1$, we split $\mathcal{R}_+$ into two disjoint subregions (Fig.~\ref{fig:integration_limits_updown}) defined as follows:
\begin{itemize}
    \item $\mathcal{R}_+^1 : \{(x, y) : y \geq |x| \}$, and
    \item $\mathcal{R}_+^2 : \{(x, y) : y \leq -|x| \}$.
\end{itemize}
Therefore, $I_1$ can be written as,
\begin{equation}
\begin{split}
    I_1 &= \int \int_{\mathcal{R}_+} \left(y^2-x^2\right) \, e^{-\alpha(x-\gamma)^2-\beta y^2} \dd y \, \dd x \\
    &= \int \int_{\mathcal{R}_+^1} \left(y^2-x^2\right) \, e^{-\alpha(x-\gamma)^2-\beta y^2} \dd y \, \dd x + \int \int_{\mathcal{R}_+^2} \left(y^2-x^2\right) \, e^{-\alpha(x-\gamma)^2-\beta y^2} \dd y \, \dd x \\ 
    &= \int_{-\infty}^\infty \int_{|x|}^\infty \left(y^2-x^2\right) \, e^{-\alpha(x-\gamma)^2-\beta y^2} \dd y \, \dd x + \int_{-\infty}^\infty \int_{-\infty}^{-|x|}  \left(y^2-x^2\right) \, e^{-\alpha(x-\gamma)^2-\beta y^2} \dd y \, \dd x
\end{split}
\end{equation}
Since the integrand in the second integral is independent of the sign of $y$, we have,
\begin{equation}
\begin{split}
    I_1 &= \int_{-\infty}^\infty \int_{|x|}^\infty \left(y^2-x^2\right) \, e^{-\alpha(x-\gamma)^2-\beta y^2} \dd y \, \dd x + \int_{-\infty}^\infty \int_{|x|}^\infty \left(y^2-x^2\right) \, e^{-\alpha(x-\gamma)^2-\beta y^2} \dd y \, \dd x \\
    &= 2 \underbrace{\int_{-\infty}^\infty \int_{|x|}^\infty \left(y^2-x^2\right) \, e^{-\alpha(x-\gamma)^2-\beta y^2} \dd y \, \dd x}_{\text{\normalsize \(I_0\)}}
\end{split}
\end{equation}
We know the result of $I_0$ from Theorem~\ref{thrm:integral_main}. Therefore, we have $I_1 = 2I_0$. Now consider the following integral, the result of which is known (Lemma~\ref{lemma:for_crossing}),
\begin{equation}
\begin{split}
    I^\prime &= \int \int_{\R^2} \left(y^2-x^2\right) \, e^{-\alpha(x-\gamma)^2-\beta y^2} \dd y \, \dd x  \\
    &= \int \int_{\mathcal{R}_+} \left(y^2-x^2\right) \, e^{-\alpha(x-\gamma)^2-\beta y^2} \dd y \, \dd x + \int \int_{\mathcal{R}_-} \left(y^2-x^2\right) \, e^{-\alpha(x-\gamma)^2-\beta y^2} \dd y \, \dd x
\end{split}
\end{equation}
We recognize these integrals from Eq.~\eqref{eq:splitting_over_plus_and_minus} and have the following relation,
\begin{equation}
    I^\prime = I_1 + I_2 \quad \text{or} \quad I_2 = I^\prime - I_1
\end{equation}
Therefore, $I$ can be expressed in terms of the known integrals as follows,
\begin{equation}
\begin{split}
    I =& I_1 - I_2 \\
    =& I_1 - \left(I^\prime - I_1\right) \\
    =& 2I_1 - I^\prime \\
    =& 4I_0 - I^\prime \\
    =& 4\left(\frac{e^{-\alpha \gamma^2}}{2\alpha\beta} \left(1 + \sqrt{\pi} \, \gamma \sqrt{\alpha + \beta}  \, e^{\frac{\alpha^2 \gamma^2}{\alpha + \beta}}  \, \Erf\left(\frac{\alpha \gamma}{\sqrt{\alpha + \beta}}\right)\right) + \pi \left(\frac{\alpha - \beta - 2\alpha\beta\gamma^2}{(\alpha\beta)^{3/2}}\right) \Ot\left(\gamma\sqrt{\frac{2\alpha\beta}{\alpha+\beta}}, \sqrt{\frac{\alpha}{\beta}}\right)\right) \\&- \frac{\pi}{2} \left(\frac{\alpha - \beta - 2\alpha \beta \gamma^2}{\left(\alpha\beta\right)^{3/2}}\right) \\
    =& \frac{2e^{-\alpha \gamma^2}}{\alpha\beta} \left(1 + \sqrt{\pi} \, \gamma \sqrt{\alpha + \beta}  \, e^{\frac{\alpha^2 \gamma^2}{\alpha + \beta}}  \, \Erf\left(\frac{\alpha \gamma}{\sqrt{\alpha + \beta}}\right)\right) + 4 \pi \left(\frac{\alpha - \beta - 2\alpha \beta \gamma^2}{\left(\alpha\beta\right)^{3/2}}\right) \left(\Ot\left(\gamma\sqrt{\frac{2\alpha\beta}{\alpha+\beta}}, \sqrt{\frac{\alpha}{\beta}}\right)-\frac{1}{8}\right)
\end{split}
\raisetag{150pt}
\end{equation}
\end{proof}

\begin{corollary}[to Theorem.~\ref{thrm:integral_crossing}]
Let $\alpha > 0$ and $\beta > 0$. The following result holds:
\begin{equation}
 \int_{-\infty}^\infty \int_{-\infty}^\infty \left|y^2-x^2\right|\, e^{-\alpha x^2-\beta y^2} \dd y \, \dd x = \frac{2}{\left(\alpha \beta\right)^{3/2}} \left(\sqrt{\alpha \beta} + \left(\alpha-\beta\right)\left(\arctan\left( \sqrt{\frac{\alpha}{\beta}}\right) -\frac{\pi}{4}\right)\right)
\end{equation}
\end{corollary}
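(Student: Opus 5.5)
This corollary is the $\gamma=0$ specialization of Theorem~\ref{thrm:integral_crossing}, so I will substitute $\gamma=0$ into Eq.~\eqref{eq:crossing_theorem_equation} and simplify. This mirrors the proof of the corollary to Theorem~\ref{thrm:integral_main}. Since $\alpha>0$ and $\beta>0$, the hypotheses of Theorem~\ref{thrm:integral_crossing} hold with $\gamma=0\in\R$. With $\gamma=0$, the factor $e^{-\alpha\gamma^2}$ equals $1$, and the term $\sqrt{\pi}\,\gamma\sqrt{\alpha+\beta}\,e^{\alpha^2\gamma^2/(\alpha+\beta)}\Erf(\cdot)$ vanishes. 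The first bracket therefore reduces to $2/(\alpha\beta)$. The prefactor of the Owen's $T$ term becomes $4\pi(\alpha-\beta)/(\alpha\beta)^{3/2}$, and the first argument of $\Ot$ is $0$.

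Next I will use $\Ot(0,a)=\arctan(a)/(2\pi)$, as in the earlier corollary, which follows immediately from the definition Eq.~\eqref{eq:Ot_defn_appendix}. This gives
\begin{equation*}
\Ot\Bigl(0,\sqrt{\alpha/\beta}\Bigr)-\frac{1}{8}=\frac{1}{2\pi}\Bigl(\arctan\sqrt{\alpha/\beta}-\frac{\pi}{4}\Bigr).
\end{equation*}
Multiplying by $4\pi(\alpha-\beta)/(\alpha\beta)^{3/2}$ yields $\frac{2(\alpha-\beta)}{(\alpha\beta)^{3/2}}\bigl(\arctan\sqrt{\alpha/\beta}-\pi/4\bigr)$.

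Finally, I will write $2/(\alpha\beta)$ as $2\sqrt{\alpha\beta}/(\alpha\beta)^{3/2}$ and factor out $2/(\alpha\beta)^{3/2}$, which gives exactly the claimed right-hand side. There is no real obstacle here; the only point requiring care is the constant $-1/8$ combining with $\Ot(0,\cdot)$ to produce $-\pi/4$. As a sanity check, setting $\alpha=\beta$ kills the arctan term and leaves $2/\alpha^2$. This agrees with a direct polar-coordinate computation: the integrand becomes $r^2|\cos 2\theta|e^{-\alpha r^2}$, and $\int_0^{2\pi}|\cos 2\theta|\,d\theta=4$, $\int_0^\infty r^3e^{-\alpha r^2}\,dr=1/(2\alpha^2)$.
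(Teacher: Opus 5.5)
Your proposal is correct and is essentially the paper's proof. You set $\gamma=0$ in Theorem~\ref{thrm:integral_crossing}, use $\Ot(0,a)=\arctan(a)/(2\pi)$ so that $4\pi\bigl(\Ot(0,\sqrt{\alpha/\beta})-\tfrac{1}{8}\bigr)=2\bigl(\arctan\sqrt{\alpha/\beta}-\tfrac{\pi}{4}\bigr)$, and factor out $2/(\alpha\beta)^{3/2}$. Your $\alpha=\beta$ polar-coordinate check, giving $2/\alpha^2$, is an independent confirmation that the paper does not include.
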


\begin{proof}
The integral is a special case of the integral in Theorem~\ref{thrm:integral_crossing} with the parameter $\gamma=0$. Eq.~\eqref{eq:crossing_theorem_equation} states that,
\begin{equation}
\begin{split}
 \int_{-\infty}^\infty \int_{-\infty}^\infty \left|y^2-x^2\right| \, e^{-\alpha(x-\gamma)^2-\beta y^2} \dd y \, \dd x =& \frac{2e^{-\alpha \gamma^2}}{\alpha\beta} \left(1 + \sqrt{\pi} \, \gamma \sqrt{\alpha + \beta}  \, e^{\frac{\alpha^2 \gamma^2}{\alpha + \beta}}  \, \Erf\left(\frac{\alpha \gamma}{\sqrt{\alpha + \beta}}\right)\right) \\& + 4 \pi \left(\frac{\alpha - \beta - 2\alpha \beta \gamma^2}{\left(\alpha\beta\right)^{3/2}}\right) \left(\Ot\left(\gamma\sqrt{\frac{2\alpha\beta}{\alpha+\beta}}, \sqrt{\frac{\alpha}{\beta}}\right)-\frac{1}{8}\right)
\end{split} 
\end{equation}
Upon substituting $\gamma=0$ on both sides, we get,
\begin{equation}
  \int_{-\infty}^\infty \int_{-\infty}^\infty \left|y^2-x^2\right| \, e^{-\alpha x^2-\beta y^2} \dd y \, \dd x = \frac{2}{\alpha\beta} 
    + 4 \pi \left(\frac{\alpha - \beta}{(\alpha\beta)^{3/2}}\right) \left(\Ot\left(0, \sqrt{\frac{\alpha}{\beta}}\right) - \frac{1}{8}\right).
\end{equation}
Since $\Ot(0,a) = \arctan\left(a\right) / (2\pi)$ \cite{owen1980table}, the integral becomes,
\begin{equation}
\begin{split}
  \int_{-\infty}^\infty \int_{-\infty}^\infty \left|y^2-x^2\right| \, e^{-\alpha x^2-\beta y^2} \dd y \, \dd x &= \frac{2}{\alpha\beta} 
    +  2\left(\frac{\alpha - \beta}{(\alpha\beta)^{3/2}}\right) \left(\arctan\left( \sqrt{\frac{\alpha}{\beta}}\right) -\frac{\pi}{4}\right)\\
    &= \frac{2}{\left(\alpha \beta\right)^{3/2}} \left(\sqrt{\alpha \beta} + \left(\alpha-\beta\right)\left(\arctan\left( \sqrt{\frac{\alpha}{\beta}}\right) -\frac{\pi}{4}\right)\right).
\end{split}
\end{equation}
\end{proof}

\begin{lemma} \label{lemma:inner_integral}
Let $\alpha > 0$, $\beta > 0$, and $\gamma \in \R$. The following result holds:
\begin{equation}
\begin{split}
    \int_{|x|}^{\infty} \left( y^{2} - x^{2} \right) e^{ -\alpha (x - \gamma)^{2} - \beta y^{2} } \dd y = \frac{1}{2 \beta}  |x|e^{ -\alpha (x - \gamma)^{2} -\beta x^{2}}  - \frac{\sqrt{\pi}}{4 \beta^{3/2}} \left(2 \beta x^2 - 1\right) e^{ -\alpha (x - \gamma)^{2} } \Erfc\left(\sqrt{\beta}|x|\right)
\end{split}
\end{equation}
\end{lemma}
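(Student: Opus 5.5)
The statement to prove is Lemma~\ref{lemma:inner_integral}. It is a one-variable computation in $y$: the factor $e^{-\alpha(x-\gamma)^2}$ does not depend on $y$ and can be pulled out. Writing $a=|x|\ge 0$, it then suffices to show
\begin{equation*}
\int_a^\infty (y^2-a^2)\,e^{-\beta y^2}\,\dd y = \frac{a\,e^{-\beta a^2}}{2\beta} - \frac{\sqrt{\pi}}{4\beta^{3/2}}\bigl(2\beta a^2-1\bigr)\Erfc\bigl(\sqrt{\beta}\,a\bigr),
\end{equation*}
and then multiply back by $e^{-\alpha(x-\gamma)^2}$. Using $x^2=a^2$ and $e^{-\alpha(x-\gamma)^2}e^{-\beta a^2}=e^{-\alpha(x-\gamma)^2-\beta x^2}$ gives exactly the claimed form.

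I would split the integrand into two standard pieces, $A=\int_a^\infty y^2e^{-\beta y^2}\,\dd y$ and $B=\int_a^\infty e^{-\beta y^2}\,\dd y$, so that the left side is $A-a^2B$. For $B$, the substitution $s=\sqrt{\beta}\,y$ gives $B=\frac{\sqrt{\pi}}{2\sqrt{\beta}}\Erfc(\sqrt{\beta}\,a)$, using $\Erfc=1-\Erf$ and the definition of $\Erf$ in Theorem~\ref{thrm:integral_main}. For $A$, integrate by parts with $u=y$ and $\dd v=y e^{-\beta y^2}\,\dd y$, so $v=-e^{-\beta y^2}/(2\beta)$. The boundary term at $\infty$ vanishes, and this yields $A=\frac{a e^{-\beta a^2}}{2\beta}+\frac{1}{2\beta}B$.

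Combining,
\begin{equation*}
A-a^2B=\frac{a e^{-\beta a^2}}{2\beta}+\Bigl(\frac{1}{2\beta}-a^2\Bigr)\frac{\sqrt{\pi}}{2\sqrt{\beta}}\Erfc(\sqrt{\beta}a),
\end{equation*}
and $\bigl(\frac{1}{2\beta}-a^2\bigr)\frac{\sqrt\pi}{2\sqrt\beta}=-\frac{\sqrt\pi}{4\beta^{3/2}}(2\beta a^2-1)$. This matches the target.

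There is no real obstacle here. The only points needing care are two bookkeeping details: the lower limit must be $|x|$ and not $x$, so that $a\ge0$ and the $\Erfc$ argument is $\sqrt{\beta}|x|$ with no sign issue; and the coefficient must be normalised exactly as $(2\beta x^2-1)$ with the prefactor $\sqrt{\pi}/(4\beta^{3/2})$. Convergence holds because $\beta>0$. The hypotheses $\alpha>0$ and $\gamma\in\R$ are not used beyond the factor being a constant in $y$.
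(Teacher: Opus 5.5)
Your proposal is correct and follows essentially the same route as the paper: pull out $e^{-\alpha(x-\gamma)^2}$, split into $\int_{|x|}^\infty y^2e^{-\beta y^2}\,\dd y - x^2\int_{|x|}^\infty e^{-\beta y^2}\,\dd y$, and express both in terms of $\Erfc(\sqrt{\beta}|x|)$. The only difference is that you derive the $y^2$ integral by integration by parts, whereas the paper simply quotes it as a standard result.
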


\begin{proof}
We begin by evaluating the integral
\begin{equation}
I = \int_{|x|}^{\infty} \left( y^{2} - x^{2} \right) e^{ -\alpha (x - \gamma)^{2} - \beta y^{2} } \dd y.
\end{equation}
Since \(x\) is treated as a constant with respect to the integration variable \(y\), we can factor out the constant exponential term,
\begin{equation}
I = e^{ -\alpha (x - \gamma)^{2} } \left( \int_{|x|}^{\infty} y^{2} e^{ -\beta y^{2} } \dd y - x^{2} \int_{|x|}^{\infty} e^{ -\beta y^{2} } \dd y \right).
\end{equation}
These integrals are standard and can be evaluated in terms of the complementary error functions $\Erfc(z) = 1 - \Erf(z)$,
\begin{equation}
\int_{|x|}^{\infty} e^{ -\beta y^{2} } \dd y = \frac{1}{2} \sqrt{\frac{\pi}{\beta}} \Erfc\left(\sqrt{\beta}|x|\right)
\end{equation}
and
\begin{equation}
\int_{|x|}^{\infty} y^{2} e^{ -\beta y^{2} } \dd y = \frac{|x|}{2 \beta} e^{-\beta x^{2}} + \frac{\sqrt{\pi}}{4 \beta^{3/2}} \Erfc\left(\sqrt{\beta}|x|\right).
\end{equation}
Substituting these results back into the expression for \(I\), we get,
\begin{equation}
I = \frac{1}{2 \beta}  |x|e^{ -\alpha (x - \gamma)^{2} -\beta x^{2}} - \frac{\sqrt{\pi}}{4 \beta^{3/2}} \left(2 \beta x^2 - 1\right) e^{ -\alpha (x - \gamma)^{2} } \Erfc\left(\sqrt{\beta}|x|\right)
\end{equation}
\end{proof}

\begin{lemma} \label{lemma:I1}
Let $\alpha > 0$, $\beta > 0$, and $\gamma \in \R$. The following result holds:
\begin{equation}
     \int_{-\infty}^\infty |x| e^{ -\alpha (x - \gamma)^{2} -\beta x^{2}} \dd x = \frac{e^{-\alpha  \gamma ^2}}{(\alpha +\beta )^{3/2}} \left(\sqrt{\alpha +\beta } + \sqrt{\pi } \, \alpha  \gamma \, e^{\frac{\alpha ^2 \gamma ^2}{\alpha +\beta }} \Erf \left(\frac{\alpha  \gamma }{\sqrt{\alpha +\beta }}\right)\right)
\end{equation}
\end{lemma}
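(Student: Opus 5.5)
We complete the square in the exponent, which reduces the integral to a first moment of a one-dimensional Gaussian over a half-line; this splits naturally at $x=0$. Write
\[
-\alpha(x-\gamma)^2-\beta x^2 = -(\alpha+\beta)\Bigl(x-\frac{\alpha\gamma}{\alpha+\beta}\Bigr)^2 - \frac{\alpha\beta\gamma^2}{\alpha+\beta},
\]
and set $a=\alpha+\beta>0$ and $m=\alpha\gamma/(\alpha+\beta)$. Note that $\frac{\alpha\beta\gamma^2}{\alpha+\beta} = \alpha\gamma^2 - \frac{\alpha^2\gamma^2}{\alpha+\beta}$. This already accounts for the prefactor $e^{-\alpha\gamma^2}$ and for the factor $e^{\alpha^2\gamma^2/(\alpha+\beta)}$ that appears in front of the $\Erf$ term.

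The integral therefore equals $e^{-\alpha\beta\gamma^2/(\alpha+\beta)}\,J$, where
\[
J=\int_{-\infty}^\infty |x|\,e^{-a(x-m)^2}\,\dd x .
\]
We split $J$ at $0$ and use $|x| = (x-m)+m$ on $x>0$ and $|x| = -(x-m)-m$ on $x<0$.

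\textbf{Linear pieces.} The terms coming from $x-m$ integrate exactly:
\[
\int_0^\infty (x-m)e^{-a(x-m)^2}\,\dd x = \frac{e^{-am^2}}{2a}.
\]
By the symmetric computation, the negative half-line contributes the same amount. Together they give $e^{-am^2}/a$.

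\textbf{Constant pieces.} The terms coming from $m$ give
\[
m\left(\int_0^\infty - \int_{-\infty}^0\right) e^{-a(x-m)^2}\,\dd x = m\sqrt{\frac{\pi}{a}}\;\Erf\bigl(m\sqrt{a}\bigr).
\]
This follows from the standard half-line Gaussian integrals,
\[
\int_0^\infty e^{-a(x-m)^2}\,\dd x = \tfrac12\sqrt{\pi/a}\,\bigl(1+\Erf(m\sqrt a)\bigr),
\]
together with the analogous formula with $1-\Erf$ for the negative half-line.

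\textbf{Recombining.} We have $am^2 = \alpha^2\gamma^2/(\alpha+\beta)$. Hence $e^{-am^2}$ times the outer factor $e^{-\alpha\beta\gamma^2/(\alpha+\beta)}$ gives exactly $e^{-\alpha\gamma^2}$, which produces the term $e^{-\alpha\gamma^2}/(\alpha+\beta)$. This matches the claimed $e^{-\alpha\gamma^2}\sqrt{\alpha+\beta}/(\alpha+\beta)^{3/2}$.

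For the second term, $m\sqrt{\pi/a} = \sqrt\pi\,\alpha\gamma/(\alpha+\beta)^{3/2}$ and $\Erf(m\sqrt a) = \Erf\bigl(\alpha\gamma/\sqrt{\alpha+\beta}\bigr)$. The outer factor equals $e^{-\alpha\gamma^2}e^{\alpha^2\gamma^2/(\alpha+\beta)}$, which is exactly the claimed second term.

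The only step needing care is this exponent bookkeeping, namely the identity $\alpha\gamma^2 - \alpha^2\gamma^2/(\alpha+\beta) = \alpha\beta\gamma^2/(\alpha+\beta)$. The signs also need care: the $\Erf$ term is odd in $\gamma$, and $\Erf(m\sqrt a)$ automatically carries the sign of $\gamma$, so no case split on the sign of $\gamma$ is needed.
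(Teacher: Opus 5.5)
Your proof is correct and takes essentially the paper's route: complete the square, split at $x=0$, evaluate the elementary half-line Gaussian zeroth and first moments, and combine them using $\Erfc = 1-\Erf$ and the exponent identity $\alpha\gamma^2 - \alpha^2\gamma^2/(\alpha+\beta) = \alpha\beta\gamma^2/(\alpha+\beta)$. The differences are cosmetic: the paper first reflects the negative half-line and completes the square separately in each piece, citing Owen's table for the half-line moments, whereas you complete the square once and derive those moments directly from $|x| = \pm\,((x-m)+m)$.
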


\begin{proof}
We denote the integral by $I$ and split it at $0$ to get,

\begin{equation}
\begin{split}
    I &= \int_{-\infty}^0 (-x) \, e^{-\beta x^2-\alpha(x-\gamma)^2} \dd x + \int_0^\infty x \, e^{-\beta x^2-\alpha(x-\gamma)^2} \dd x  \\
    &= \underbrace{\int_{0}^\infty x \, e^{-\beta x^2-\alpha(x+\gamma)^2} \dd x}_{\text{\normalsize \(I_1\)}} + \underbrace{\int_0^\infty x \, e^{-\beta x^2-\alpha(x-\gamma)^2} \dd x }_{\text{\normalsize \(I_2\)}}.
\end{split}
\end{equation}
We complete the square in $x$ for both $I_1$ and $I_2$, which gives us,
\begin{equation}
    I_1 = e^{-\frac{\alpha \beta \gamma^2}{\alpha + \beta}} \int_{0}^\infty x \, e^{-\left(\sqrt{\alpha+\beta}\,x + \frac{\alpha \gamma}{\sqrt{\alpha+\beta}}\right)^2} \dd x \quad \text{and} \quad I_2 = e^{-\frac{\alpha \beta \gamma^2}{\alpha + \beta}} \int_{0}^\infty x \, e^{-\left(\sqrt{\alpha+\beta}\,x - \frac{\alpha \gamma}{\sqrt{\alpha+\beta}}\right)^2} \dd x
\end{equation}
These are well-known integrals \cite{owen1980table} and are given by,
\begin{equation}
    I_1 = \frac{e^{-\alpha  \gamma ^2}}{2 (\alpha +\beta )^{3/2}} \left(\sqrt{\alpha +\beta }-\sqrt{\pi } \, \alpha  \gamma \, e^{\frac{\alpha ^2 \gamma ^2}{\alpha +\beta }} \Erfc\left(\frac{\alpha  \gamma }{\sqrt{\alpha +\beta }}\right)\right)
\end{equation}
\begin{equation}
    I_2 = \frac{e^{-\alpha  \gamma ^2}}{2 (\alpha +\beta )^{3/2}} \left(\sqrt{\alpha +\beta } + \sqrt{\pi } \, \alpha  \gamma \, e^{\frac{\alpha ^2 \gamma ^2}{\alpha +\beta }} \Erfc\left(-\frac{\alpha  \gamma }{\sqrt{\alpha +\beta }}\right)\right)
\end{equation}
Substituting $\Erfc(z) = 1 - \Erf(z)$ and noting that $\Erf(-z) = -\Erf(z)$, we get the following expression for $I$,
\begin{equation}
\begin{split}
        I &= I_1 + I_2 \\
        &= \frac{e^{-\alpha  \gamma ^2}}{(\alpha +\beta )^{3/2}} \left(\sqrt{\alpha +\beta } + \sqrt{\pi } \, \alpha  \gamma \, e^{\frac{\alpha ^2 \gamma ^2}{\alpha +\beta }} \Erf \left(\frac{\alpha  \gamma }{\sqrt{\alpha +\beta }}\right)\right) 
\end{split}
\end{equation}
\end{proof}

\begin{lemma} \label{lemma:I21}
Let $\alpha > 0$, $\beta > 0$, and $\gamma \in \R$. The following result holds:
\begin{equation}
    \int_{-\infty}^\infty \left(2 x^2 - 1\right) e^{ -\frac{\alpha}{\beta} (x - \gamma\sqrt{\beta})^{2}} \dd x = \sqrt{\pi} \sqrt{\frac{\beta}{\alpha}} \left(\frac{\beta}{\alpha} + 2 \beta \gamma^2 - 1 \right)
\end{equation}
\end{lemma}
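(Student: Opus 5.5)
The plan is to reduce the integral to standard Gaussian moments by a shift of variable. Set $x = \gamma\sqrt{\beta} + s\sqrt{\beta/\alpha}$, so that $\frac{\alpha}{\beta}(x-\gamma\sqrt{\beta})^2 = s^2$ and $\dd x = \sqrt{\beta/\alpha}\,\dd s$. The integrand then becomes a polynomial in $s$ times $e^{-s^2}$, and everything reduces to the moments $\int e^{-s^2}\dd s=\sqrt{\pi}$, $\int s\,e^{-s^2}\dd s=0$ and $\int s^2 e^{-s^2}\dd s=\sqrt{\pi}/2$.

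Explicitly, expand
\[
2x^2 = 2\beta\gamma^2 + 4\gamma\beta\alpha^{-1/2}s + 2\frac{\beta}{\alpha}s^2 .
\]
The odd term in $s$ integrates to zero, which gives
\[
\sqrt{\tfrac{\beta}{\alpha}}\Bigl(2\beta\gamma^2\sqrt{\pi} + 2\tfrac{\beta}{\alpha}\cdot\tfrac{\sqrt{\pi}}{2} - \sqrt{\pi}\Bigr)
= \sqrt{\pi}\sqrt{\tfrac{\beta}{\alpha}}\Bigl(\tfrac{\beta}{\alpha}+2\beta\gamma^2-1\Bigr),
\]
which is the claimed value.

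Equivalently, one can view the integral as $\sqrt{\pi\beta/\alpha}$ times the expectation of $2X^2-1$ under a normal law with mean $\gamma\sqrt{\beta}$ and variance $\beta/(2\alpha)$. Then one uses $\E[X^2]=\mu^2+\sigma^2$.

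Convergence is guaranteed by $\alpha,\beta>0$, and $\gamma\in\R$ plays no role beyond the shift. There is no real obstacle; the only care needed is tracking the Jacobian factor $\sqrt{\beta/\alpha}$ and the variance $\beta/(2\alpha)$ correctly.
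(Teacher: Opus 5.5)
Your proof is correct and follows essentially the paper's own route: shift to center the Gaussian, drop the odd term, and evaluate the zeroth and second Gaussian moments. The only difference is that the paper shifts by $\gamma\sqrt{\beta}$ without rescaling and uses the moments of $e^{-\frac{\alpha}{\beta}k^2}$ directly, which is immaterial.
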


\begin{proof}
We denote the integral under consideration by $I$ and make the following variable transformation, $x = k + \gamma \sqrt{\beta}$. This gives us
\begin{equation}
\begin{split}
    I &= \int_{-\infty}^\infty \left(2 \left( k + \gamma\sqrt{\beta} \right)^2 - 1\right) e^{-\frac{\alpha}{\beta} k^2} \dd k \\
    &= \int_{-\infty}^\infty \left(2 k^2 + 4 \gamma \sqrt{\beta} \, k + 2 \beta \gamma^2 - 1\right) e^{-\frac{\alpha}{\beta} k^2} \dd k \\
    &= 2 \underbrace{\int_{-\infty}^\infty k^2 e^{-\frac{\alpha}{\beta} k^2} \dd k}_{\text{\normalsize \(I_1\)}} + 4 \gamma \sqrt{\beta} \underbrace{\int_{-\infty}^\infty k \, e^{-\frac{\alpha}{\beta} k^2} \dd k}_{\text{\normalsize \(I_2\)}} + \left(2 \beta \gamma^2 - 1 \right) \underbrace{\int_{-\infty}^\infty e^{-\frac{\alpha}{\beta} k^2} \dd k}_{\text{\normalsize \(I_3\)}} 
\end{split}
\end{equation}
The integrand of $I_2$ is an odd function; therefore, $I_2=0$. $I_1$ and $I_3$ are standard integrals given by,
\begin{equation}
    I_1 = \frac{\sqrt{\pi}}{2} \left( \frac{\beta}{\alpha} \right)^{3/2} \quad \text{and} \quad I_3 = \sqrt{\pi} \sqrt{\frac{\beta}{\alpha}}
\end{equation}
Therefore, $I$ is given by,
\begin{equation}
\begin{split}
    I &= \sqrt{\pi} \left( \frac{\beta}{\alpha} \right)^{3/2} + \left(2 \beta \gamma^2 - 1 \right) \sqrt{\pi} \sqrt{\frac{\beta}{\alpha}} \\
    &= \sqrt{\pi} \sqrt{\frac{\beta}{\alpha}} \left(\frac{\beta}{\alpha} + 2 \beta \gamma^2 - 1 \right)
\end{split}
\end{equation}
\end{proof}

\begin{lemma} \label{lemma:I2221}
Let $\alpha > 0$, $\beta > 0$, and $\gamma \in \R$. The following result holds:
\begin{equation}
\begin{split}
    \int_{0}^\infty e^{-\frac{\alpha}{\beta}  \left(x + \gamma\sqrt{\beta } \right)^2 - x^2} \biggl( \gamma\sqrt{\beta } &-\left(x + \gamma\sqrt{\beta } \right) e^{\frac{4 \alpha  \gamma }{\sqrt{\beta }} x} - x\biggr)  \dd x = \\& -\frac{\beta \, e^{-\alpha  \gamma ^2} }{(\alpha +\beta )^{3/2}} \biggl(\sqrt{\alpha +\beta } +\sqrt{\pi } \,\gamma \, (2 \alpha +\beta ) \, e^{\frac{\alpha ^2 \gamma ^2}{\alpha +\beta }} \Erf\left(\frac{\alpha  \gamma }{\sqrt{\alpha +\beta }}\right)\biggr)
\end{split}
\end{equation}
\end{lemma}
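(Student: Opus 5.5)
The plan is to simplify the integrand algebraically until it becomes a signed version of the integral already computed in Lemma~\ref{lemma:I1}, plus one elementary Gaussian integral. Throughout, write $c=\gamma\sqrt{\beta}$.

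First, merge the factor $e^{\frac{4\alpha\gamma}{\sqrt{\beta}}x}$ into the Gaussian. Since $\frac{4\alpha\gamma}{\sqrt\beta}=\frac{4\alpha}{\beta}c$, completing the square gives
\begin{equation*}
\frac{\alpha}{\beta}(x+c)^2-\frac{4\alpha}{\beta}cx=\frac{\alpha}{\beta}(x-c)^2 .
\end{equation*}
Hence the integrand equals
\begin{equation*}
e^{-x^2}\Bigl[(c-x)\,e^{-\frac{\alpha}{\beta}(x+c)^2}-(x+c)\,e^{-\frac{\alpha}{\beta}(x-c)^2}\Bigr].
\end{equation*}

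Next, in the first piece substitute $x\to -x$. This maps $\int_0^\infty (c-x)e^{-x^2-\frac{\alpha}{\beta}(x+c)^2}\,\dd x$ onto $\int_{-\infty}^0 (x+c)e^{-x^2-\frac{\alpha}{\beta}(x-c)^2}\,\dd x$. Combining this with the second piece, the whole integral becomes
\begin{equation*}
-\int_{-\infty}^{\infty}\operatorname{sgn}(x)\,(x+c)\,e^{-x^2-\frac{\alpha}{\beta}(x-c)^2}\,\dd x .
\end{equation*}

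Now rescale $x=\sqrt{\beta}\,y$. The exponent becomes $-\beta y^2-\alpha(y-\gamma)^2$, and the integral splits into two terms:
\begin{equation*}
-\beta\int_{-\infty}^\infty |y|\,e^{-\alpha(y-\gamma)^2-\beta y^2}\,\dd y \;-\;\beta\gamma\int_{-\infty}^\infty \operatorname{sgn}(y)\,e^{-\alpha(y-\gamma)^2-\beta y^2}\,\dd y .
\end{equation*}
The first term is exactly $-\beta$ times the integral evaluated in Lemma~\ref{lemma:I1}.

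For the second term, complete the square:
\begin{equation*}
\alpha(y-\gamma)^2+\beta y^2=(\alpha+\beta)\Bigl(y-\tfrac{\alpha\gamma}{\alpha+\beta}\Bigr)^2+\tfrac{\alpha\beta\gamma^2}{\alpha+\beta}.
\end{equation*}
A signed Gaussian integral centred at $\mu=\frac{\alpha\gamma}{\alpha+\beta}$ equals $\sqrt{\pi/(\alpha+\beta)}\,\Erf\bigl(\sqrt{\alpha+\beta}\,\mu\bigr)$. Using also $e^{-\alpha\beta\gamma^2/(\alpha+\beta)}=e^{-\alpha\gamma^2}e^{\alpha^2\gamma^2/(\alpha+\beta)}$, the second term equals
\begin{equation*}
-\frac{\beta e^{-\alpha\gamma^2}}{(\alpha+\beta)^{3/2}}\,\sqrt{\pi}\,\gamma(\alpha+\beta)\,e^{\frac{\alpha^2\gamma^2}{\alpha+\beta}}\Erf\Bigl(\frac{\alpha\gamma}{\sqrt{\alpha+\beta}}\Bigr).
\end{equation*}

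Finally, add this to $-\beta$ times Lemma~\ref{lemma:I1}. The $\Erf$ coefficients add as $\alpha+(\alpha+\beta)=2\alpha+\beta$, which gives the claimed right-hand side.

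The main obstacle is spotting the first step: the factor $e^{4\alpha\gamma x/\sqrt\beta}$ turns $(x+c)^2$ into $(x-c)^2$. After that, the reflection $x\to-x$ folds the two half-line integrals into one full-line signed integral, and the rest is bookkeeping of constants, especially the $\sqrt{\beta}$ Jacobian and the exponential prefactors. I would check those constants at $\gamma=0$, where both sides reduce to $-\beta/(\alpha+\beta)$.
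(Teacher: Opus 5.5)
Your proof is correct. With $c=\gamma\sqrt{\beta}$, the merge $\frac{\alpha}{\beta}(x+c)^2-\frac{4\alpha}{\beta}cx=\frac{\alpha}{\beta}(x-c)^2$ holds, as do the reflection, the $\sqrt{\beta}$ Jacobian, the signed Gaussian integral and the exponent identity. The $\gamma=0$ test gives $-\beta/(\alpha+\beta)$ on both sides.

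Your route differs from the paper's, though. The paper expands the bracket into four half-line integrals and completes the square in each; this implicitly performs your merge of the factor $e^{4\alpha\gamma x/\sqrt{\beta}}$. It then reads off each piece from Owen's table as a standard half-line Gaussian integral of order zero or one, and adds the four results.

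You instead use $x\to -x$ to fold the two halves into the single full-line integral $-\beta\int_{\R}\mathrm{sgn}(y)\,(y+\gamma)\,e^{-\alpha(y-\gamma)^2-\beta y^2}\,\dd y$. The $|y|$ part is then exactly $-\beta$ times Lemma~\ref{lemma:I1}, and only one new elementary integral remains.

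The paper's computation is mechanical and needs no insight, but it carries more $\Erf$ bookkeeping. Yours reuses an existing lemma and explains the coefficient $2\alpha+\beta=\alpha+(\alpha+\beta)$. It also shows why this term combines so cleanly with the $I_1$ term in the final assembly of Theorem~\ref{thrm:integral_main}: both are moments against the same weight $e^{-\alpha(y-\gamma)^2-\beta y^2}$.
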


\begin{proof}
We denote the integral by $I$ and expand to get the following four integrals,
\begin{equation}
\begin{split}
    I =&  \gamma\sqrt{\beta } \int_{0}^\infty e^{-\frac{\alpha}{\beta}  \left(x + \gamma\sqrt{\beta } \right)^2 - x^2} \, \dd x - \gamma\sqrt{\beta } \int_{0}^\infty e^{-\frac{\alpha}{\beta}  \left(x + \gamma\sqrt{\beta } \right)^2 - x^2 + \frac{4 \alpha  \gamma }{\sqrt{\beta }} x}  \, \dd x \\&
    - \int_{0}^\infty x\, e^{-\frac{\alpha}{\beta}  \left(x + \gamma\sqrt{\beta } \right)^2 - x^2} \, \dd x - \int_{0}^\infty x \, e^{-\frac{\alpha}{\beta}  \left(x + \gamma\sqrt{\beta } \right)^2 - x^2 + \frac{4 \alpha  \gamma }{\sqrt{\beta }} x}  \, \dd x
\end{split}
\end{equation}
Forming a square in $x$ in the exponential terms, we get,
\begin{equation}
\begin{split}
    I =&  \gamma\sqrt{\beta } \, e^{-\frac{\alpha  \beta  \gamma ^2}{\alpha +\beta }} \int_{0}^\infty e^{-\left(\sqrt{\frac{\alpha +\beta }{\beta }} \, x + \frac{\alpha  \gamma }{\sqrt{\alpha +\beta }} \right)^2} \, \dd x - \gamma\sqrt{\beta } \, e^{-\frac{\alpha  \beta  \gamma ^2}{\alpha +\beta }} \int_{0}^\infty e^{-\left(\sqrt{\frac{\alpha +\beta }{\beta }} \, x - \frac{\alpha  \gamma }{\sqrt{\alpha +\beta }} \right)^2}  \, \dd x \\&  - e^{-\frac{\alpha  \beta  \gamma ^2}{\alpha +\beta }} \int_{0}^\infty x\, e^{-\left(\sqrt{\frac{\alpha +\beta }{\beta }} \, x + \frac{\alpha  \gamma }{\sqrt{\alpha +\beta }} \right)^2} \, \dd x   - e^{-\frac{\alpha  \beta  \gamma ^2}{\alpha +\beta }} \int_{0}^\infty x \, e^{-\left(\sqrt{\frac{\alpha +\beta }{\beta }} \, x - \frac{\alpha  \gamma }{\sqrt{\alpha +\beta }} \right)^2}  \, \dd x
\end{split} \label{eq:four_integrals}
\end{equation}
These are standard integrals of normal functions. Using the results in \cite{owen1980table}, we get,
\begin{equation}
    \int_{0}^\infty e^{-\left(\sqrt{\frac{\alpha +\beta }{\beta }} \, x + \frac{\alpha  \gamma }{\sqrt{\alpha +\beta }} \right)^2} \, \dd x = \frac{\sqrt{\pi }}{2} \sqrt{\frac{\beta }{\alpha +\beta }} \left(1 - \Erf\left(\frac{\alpha  \gamma }{\sqrt{\alpha +\beta }}\right)\right)
\end{equation}
\begin{equation}
    \int_{0}^\infty e^{-\left(\sqrt{\frac{\alpha +\beta }{\beta }} \, x - \frac{\alpha  \gamma }{\sqrt{\alpha +\beta }} \right)^2} \, \dd x = \frac{\sqrt{\pi }}{2} \sqrt{\frac{\beta }{\alpha +\beta }} \left(1 + \Erf\left(\frac{\alpha  \gamma }{\sqrt{\alpha +\beta }}\right)\right)
\end{equation}
\begin{equation}
    \int_{0}^\infty x\, e^{-\left(\sqrt{\frac{\alpha +\beta }{\beta }} \, x + \frac{\alpha  \gamma }{\sqrt{\alpha +\beta }} \right)^2} \, \dd x = \frac{\beta}{2 (\alpha +\beta )^{3/2}}  \left(\sqrt{\alpha +\beta } \, e^{-\frac{\alpha ^2 \gamma ^2}{\alpha +\beta }}-\sqrt{\pi } \, \alpha  \gamma  \,\left(1-\Erf\left(\frac{\alpha  \gamma }{\sqrt{\alpha +\beta }}\right)\right)\right)
\end{equation}
\begin{equation}
    \int_{0}^\infty x\, e^{-\left(\sqrt{\frac{\alpha +\beta }{\beta }} \, x - \frac{\alpha  \gamma }{\sqrt{\alpha +\beta }} \right)^2} \, \dd x = \frac{\beta}{2 (\alpha +\beta )^{3/2}}  \left(\sqrt{\alpha +\beta } \, e^{-\frac{\alpha ^2 \gamma ^2}{\alpha +\beta }}+\sqrt{\pi } \, \alpha  \gamma  \,\left(1+\Erf\left(\frac{\alpha  \gamma }{\sqrt{\alpha +\beta }}\right)\right)\right)
\end{equation}
Substituting these expressions in Eq.~\eqref{eq:four_integrals} and simplifying, we get,
\begin{equation}
    I = -\frac{\beta \, e^{-\alpha  \gamma ^2} }{(\alpha +\beta )^{3/2}} \biggl(\sqrt{\alpha +\beta } +\sqrt{\pi } \,\gamma \, (2 \alpha +\beta ) \, e^{\frac{\alpha ^2 \gamma ^2}{\alpha +\beta }} \Erf\left(\frac{\alpha  \gamma }{\sqrt{\alpha +\beta }}\right)\biggr)
\end{equation}

\end{proof}

\begin{lemma} \label{lemma:I2222}
Let $\alpha > 0$, $\beta > 0$, and $\gamma \in \R$. The following result holds:
\begin{equation}
\begin{split}
    \int\limits_{0}^\infty e^{-x^2}\left(\Erf\left(\sqrt{\frac{\alpha}{\beta}}\left(x+\gamma\sqrt{\beta}\right)\right)+\Erf\left(\sqrt{\frac{\alpha}{\beta}}\left(x-\gamma\sqrt{\beta}\right) \right)\right) \,\dd x = 4 \sqrt{\pi} \,\Ot\left(\sqrt{\frac{2 \alpha \beta}{\alpha + \beta}} \, \gamma, \sqrt{\frac{\alpha}{\beta}}\right)
\end{split}
\end{equation}
\end{lemma}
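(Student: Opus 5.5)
The plan is to treat the left-hand side as a function of the shift. Write $a=\sqrt{\alpha/\beta}$ and $c=\gamma\sqrt{\beta}$, so the integrand is $e^{-x^2}\bigl(\Erf(a(x+c))+\Erf(a(x-c))\bigr)$. Using $\alpha\beta\gamma^2/(\alpha+\beta)=a^2c^2/(1+a^2)$, the claimed right-hand side becomes
\begin{equation*}
4\sqrt{\pi}\,\Ot\bigl(h(c),a\bigr),\qquad h(c)=\frac{\sqrt{2}\,a\,c}{\sqrt{1+a^2}} .
\end{equation*}
So it suffices to show that $F(c)=\int_0^\infty e^{-x^2}\bigl(\Erf(a(x+c))+\Erf(a(x-c))\bigr)\dd x$ and $G(c)=4\sqrt{\pi}\,\Ot(h(c),a)$ agree at $c=0$ and have equal derivatives in $c$. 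Both are even in $c$ and smooth, because the integrand is dominated by $2e^{-x^2}$, so differentiation under the integral sign is justified.

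\emph{Base case.} At $c=0$, $F(0)=2\int_0^\infty e^{-x^2}\Erf(ax)\,\dd x$. This standard integral can be obtained by differentiating in $a$: the derivative is $\frac{2}{\sqrt\pi}\int_0^\infty e^{-(1+a^2)x^2}\dd x=\frac{1}{1+a^2}$. Integrating from $a=0$ gives $F(0)=\frac{2}{\sqrt{\pi}}\arctan a$. On the other side, $\Ot(0,a)=\arctan(a)/(2\pi)$, as already used in the paper's corollaries, so $G(0)=\frac{2}{\sqrt\pi}\arctan a$ as well.

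\emph{Derivative of $F$.} Differentiating under the integral gives
\begin{equation*}
F'(c)=\frac{2a}{\sqrt\pi}\int_0^\infty\Bigl(e^{-x^2-a^2(x+c)^2}-e^{-x^2-a^2(x-c)^2}\Bigr)\dd x .
\end{equation*}
Complete the square exactly as in the proofs of Lemmas~\ref{lemma:I1} and~\ref{lemma:I2221}:
\begin{equation*}
x^2+a^2(x\pm c)^2=(1+a^2)\Bigl(x\pm\tfrac{a^2c}{1+a^2}\Bigr)^2+\tfrac{a^2c^2}{1+a^2}.
\end{equation*}
Each half-line integral is then an $\Erfc$. Using $\Erfc(-z)-\Erfc(z)=2\Erf(z)$, this yields
\begin{equation*}
F'(c)=-\frac{2a}{\sqrt{1+a^2}}\,e^{-\frac{a^2c^2}{1+a^2}}\,\Erf\Bigl(\frac{a^2c}{\sqrt{1+a^2}}\Bigr).
\end{equation*}

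\emph{Derivative of $G$.} Differentiating the defining integral of Owen's $T$ function in $h$ gives
\begin{equation*}
\partial_h\Ot(h,a)=-\frac{h}{2\pi}e^{-h^2/2}\int_0^a e^{-h^2t^2/2}\dd t=-\frac{1}{2\sqrt{2\pi}}\,e^{-h^2/2}\,\Erf\bigl(ah/\sqrt2\bigr).
\end{equation*}
Substituting $h=h(c)$ and multiplying by $h'(c)=\sqrt2\,a/\sqrt{1+a^2}$, we use $h^2/2=a^2c^2/(1+a^2)$ and $ah/\sqrt2=a^2c/\sqrt{1+a^2}$. This gives exactly the expression obtained for $F'(c)$. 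Since $F(0)=G(0)$ and $F'=G'$, we get $F\equiv G$ for all $c\in\R$. Translating back via $c=\gamma\sqrt\beta$ gives the lemma.

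The main obstacle is the bookkeeping in the derivative of $F$. One must track the signs of the two shifted Gaussians on the half-line $[0,\infty)$ carefully so that the constant (``$1$'') parts of the two $\Erfc$ terms cancel and only the odd $\Erf$ part survives, and then confirm that the argument and prefactor match the chain-rule expression. The base case and the $h$-derivative of $\Ot$ are routine.
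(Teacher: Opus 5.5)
Your argument is correct, and it takes a different route from the paper. The paper differentiates nothing. It quotes Owen's tabulated formula $\int_0^\infty G'(x)\,G(a+bx)\,\dd x=\tfrac12 G(h)+\Ot(h,b)$, with $h=a/\sqrt{1+b^2}$ and $G$ the standard normal CDF. It rewrites this in terms of $\Erf$ to get a closed form for the one-sided integral $\int_0^\infty e^{-k^2}\Erf(bk+c)\,\dd k$. It then adds the $+c$ and $-c$ versions, so the $\Erf$ terms cancel because $\Erf$ is odd and $\Ot$ is even in its first argument. You instead verify the symmetrized identity directly: you match values at $c=0$ via $\Ot(0,a)=\arctan(a)/(2\pi)$, which is immediate from the definition, and you match $c$-derivatives. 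Because you work with the sum from the start, the odd $\Erf$ term of the one-sided formula never appears. Your version is self-contained and in effect proves the special case of Owen's formula that is needed, at the cost of a few lines of Gaussian bookkeeping. The paper's version is shorter given the table and also yields the unsymmetrized identity. Your computations of $F'(c)$ and $\partial_h\Ot$ check out, including $h^2/2=a^2c^2/(1+a^2)$ and $ah/\sqrt2=a^2c/\sqrt{1+a^2}$, so the chain-rule match is exact.

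There is one slip in the base case. Since $\partial_a\Erf(ax)=\frac{2}{\sqrt\pi}\,x\,e^{-a^2x^2}$, your integrand is missing a factor of $x$, and the value $\frac{1}{1+a^2}$ does not integrate to your stated $F(0)$. The correct computation is $\partial_a F(0)=\frac{4}{\sqrt\pi}\int_0^\infty x\,e^{-(1+a^2)x^2}\,\dd x=\frac{2}{\sqrt\pi\,(1+a^2)}$, which does give $F(0)=\frac{2}{\sqrt\pi}\arctan a$.

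A small precision point: to justify differentiating under the integral you need a dominating function for the $c$-derivative of the integrand, for example $\frac{4a}{\sqrt\pi}e^{-x^2}$. A bound on the integrand itself, such as $2e^{-x^2}$, is not the relevant one.
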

 
\begin{proof}
Consider the result ``$10,016.6$'' from Owen's table of normal integrals \cite{owen1980table}. It states that,
\begin{equation}
    \int_0^\infty G^\prime(x) \, G(a+bx) \, \dd x = \frac{1}{2} G\left(\frac{a}{\sqrt{1+b^2}}\right) + \Ot\left(\frac{a}{\sqrt{1+b^2}}, b\right) \label{eq:owens_result}
\end{equation}
where $G(z)$ is the cumulative distribution function of a standard normal Gaussian random variable and $G^\prime(z)$ is its derivative, i.e.,
\begin{equation}
    G(z) = \frac{1}{\sqrt{2 \pi}}\int_{-\infty}^z e^{-t^2/2} \, \dd t \quad \text{and} \quad G^\prime(z) = \frac{1}{\sqrt{2 \pi}} e^{-z^2/2},
\end{equation}
and $\Ot(h, a)$ is Owen's T function defined as,
\begin{equation}
    \Ot(h, a) = \frac{1}{2\pi} \int_0^a \frac{e^{-\frac{1}{2}h^2(1+t^2)}}{1+t^2} \, \dd t.
\end{equation}
First, we note the following relation,
\begin{equation}
    G(z) = \frac{1}{2} \left(1 + \Erf\left(\frac{z}{\sqrt{2}}\right)\right)
\end{equation}
We substitute this expression into the $\LHS$ of Eq.~\eqref{eq:owens_result},
\begin{equation}
\begin{split}
    \LHS &= \int_0^\infty G^\prime(x) \, G(a+bx) \, \dd x \\
    &= \int_0^\infty \left(\frac{1}{\sqrt{2 \pi}} e^{-x^2/2}\right) \, \left(\frac{1}{2} \left(1 + \Erf\left(\frac{a+bx}{\sqrt{2}}\right)\right)\right) \, \dd x \\
    &= \frac{1}{2\sqrt{2 \pi}}\int_0^\infty e^{-x^2/2} \, \dd x + \frac{1}{2\sqrt{2 \pi}}\int_0^\infty e^{-x^2/2} \Erf\left(\frac{a+bx}{\sqrt{2}}\right) \, \dd x \\
    &= \frac{1}{4} + \frac{1}{2\sqrt{2 \pi}}\int_0^\infty e^{-x^2/2} \Erf\left(\frac{a+bx}{\sqrt{2}}\right) \, \dd x
\end{split} 
\end{equation}
Making the variable change $k=x/\sqrt{2}$ gives us,
\begin{equation}
    \LHS = \frac{1}{4} + \frac{1}{2\sqrt{\pi}}\int_0^\infty e^{-k^2} \Erf\left(bk + \frac{a}{\sqrt{2}}\right) \, \dd k
\end{equation}
Now, the $\RHS$ of Eq.~\eqref{eq:owens_result} can be written as,
\begin{equation}
\begin{split}
    \RHS &= \frac{1}{2} G\left(\frac{a}{\sqrt{1+b^2}}\right) + \Ot\left(\frac{a}{\sqrt{1+b^2}}, b\right) \\
    &= \frac{1}{4} \left(1 + \Erf\left(\frac{a}{\sqrt{2(1+b^2)}}\right)\right) + \Ot\left(\frac{a}{\sqrt{1+b^2}}, b\right)
\end{split}
\end{equation}
Equating $\LHS = \RHS$ gives us the following relation,
\begin{equation}
    \int_0^\infty e^{-k^2} \Erf\left(bk + \frac{a}{\sqrt{2}}\right) \, \dd k = \sqrt{\pi}\left( \frac{1}{2}\Erf\left(\frac{a}{\sqrt{2(1+b^2)}}\right) + 2 \,\Ot\left(\frac{a}{\sqrt{1+b^2}}, b\right) \right)
\end{equation}
Finally, we replace $a/\sqrt{2}$ by $c$ on both sides to get the relation,
\begin{equation}
    \int_0^\infty e^{-k^2} \Erf\left(bk + c\right) \, \dd k = \sqrt{\pi}\left( \frac{1}{2}\Erf\left(\frac{c}{\sqrt{1+b^2}}\right) + 2 \,\Ot\left(\sqrt{\frac{2}{1+b^2}} \, c, b\right) \right)
\end{equation}
Replacing $c\rightarrow-c$ and using the properties that $\Erf(-z) = -\Erf(z)$ and $\Ot(-h, a) = \Ot(h, a)$,  we also have the relation,
\begin{equation}
    \int_0^\infty e^{-k^2} \Erf\left(bk - c\right) \, \dd k = \sqrt{\pi}\left( -\frac{1}{2}\Erf\left(\frac{c}{\sqrt{1+b^2}}\right) + 2 \,\Ot\left(\sqrt{\frac{2}{1+b^2}} \, c, b\right) \right)
\end{equation}
Adding these two equations gives the following result,
\begin{equation}
    \int_0^\infty e^{-k^2} \left(\Erf\left(bk + c\right) + \Erf\left(bk - c\right)\right)\, \dd k = 4 \sqrt{\pi} \,\Ot\left(\sqrt{\frac{2}{1+b^2}} \, c, b\right)
\end{equation}
Substituting $b\rightarrow \sqrt{\alpha/\beta}$ and $c\rightarrow \gamma \sqrt{\alpha}$ into the equation above gives us,
\begin{equation}
    \int_0^\infty e^{-k^2} \left(\Erf\left(\sqrt{\frac{\alpha}{\beta}} \, k + \gamma \sqrt{\alpha}\right) + \Erf\left(\sqrt{\frac{\alpha}{\beta}} \, k - \gamma \sqrt{\alpha}\right)\right)\, \dd k = 4 \sqrt{\pi} \,\Ot\left(\sqrt{\frac{2 \alpha \beta}{\alpha + \beta}} \, \gamma, \sqrt{\frac{\alpha}{\beta}}\right)
\end{equation}
\end{proof}

\begin{lemma} \label{lemma:for_crossing}
Let $\alpha > 0$, $\beta > 0$, and $\gamma \in \R$. The following result holds:
\begin{equation}
    \int_{-\infty}^\infty \int_{-\infty}^\infty \left(y^2-x^2\right) \, e^{-\alpha(x-\gamma)^2-\beta y^2} \dd y \, \dd x = \frac{\pi}{2} \left(\frac{\alpha - \beta - 2\alpha \beta \gamma^2}{\left(\alpha\beta\right)^{3/2}}\right)
\end{equation}
\end{lemma}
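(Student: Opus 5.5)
The integrand $(y^2-x^2)\,e^{-\alpha(x-\gamma)^2-\beta y^2}$ separates over all of $\R^2$, so I will use Fubini to split the double integral into two products of one-dimensional Gaussian moments:
\begin{equation*}
\int_{-\infty}^\infty e^{-\alpha(x-\gamma)^2}\,\dd x\int_{-\infty}^\infty y^2 e^{-\beta y^2}\,\dd y \;-\; \int_{-\infty}^\infty x^2 e^{-\alpha(x-\gamma)^2}\,\dd x\int_{-\infty}^\infty e^{-\beta y^2}\,\dd y .
\end{equation*}
Absolute integrability is immediate because $\alpha>0$ and $\beta>0$. No region-splitting or error functions are needed here, unlike in Theorem~\ref{thrm:integral_main}.

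I will then use the standard moments.
\begin{itemize}
\item First, $\int e^{-\beta y^2}\,\dd y=\sqrt{\pi/\beta}$.
\item Second, $\int y^2 e^{-\beta y^2}\,\dd y=\tfrac{\sqrt{\pi}}{2}\beta^{-3/2}$.
\item Third, $\int e^{-\alpha(x-\gamma)^2}\,\dd x=\sqrt{\pi/\alpha}$.
\item Fourth, for the shifted second moment I substitute $x=k+\gamma$ as in Lemma~\ref{lemma:I21}. The odd cross term vanishes, which gives $\int x^2 e^{-\alpha(x-\gamma)^2}\,\dd x=\tfrac{\sqrt{\pi}}{2}\alpha^{-3/2}+\gamma^2\sqrt{\pi/\alpha}$.
\end{itemize}

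Combining these, the integral equals
\begin{equation*}
\frac{\pi}{2\sqrt{\alpha}\,\beta^{3/2}}-\frac{\pi}{2\alpha^{3/2}\sqrt{\beta}}-\frac{\pi\gamma^2}{\sqrt{\alpha\beta}} .
\end{equation*}
I will put everything over the common denominator $(\alpha\beta)^{3/2}$. The three numerators become $\tfrac{\pi}{2}\alpha$, $-\tfrac{\pi}{2}\beta$ and $-\pi\gamma^2\alpha\beta$. Their sum is $\tfrac{\pi}{2}(\alpha-\beta-2\alpha\beta\gamma^2)$, which is the claimed result.

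There is no real obstacle here. The only care needed is the bookkeeping of the half-integer powers of $\alpha$ and $\beta$ when combining the three terms, and that step is routine.
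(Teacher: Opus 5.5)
Your proposal is correct and follows essentially the same route as the paper. The paper also uses Fubini: it integrates in $y$ first, obtaining $\frac{\sqrt{\pi}}{2\beta^{3/2}} - \sqrt{\pi/\beta}\,x^2$, and then integrates the standard shifted Gaussian moments in $x$. Your product-of-moments bookkeeping yields the same three terms and the same final combination $\frac{\pi}{2}\bigl(\alpha-\beta-2\alpha\beta\gamma^2\bigr)/(\alpha\beta)^{3/2}$.
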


\begin{proof}
Let the integral we need to evaluate be denoted by $I$. First, we integrate in $y$ assuming $x$ is constant,
\begin{equation}
\begin{split}
    I &= \int_{-\infty}^\infty e^{-\alpha(x-\gamma)^2} \left(\int_{-\infty}^\infty y^2 \, e^{-\beta y^2} \dd y - x^2 \int_{-\infty}^\infty  e^{-\beta y^2} \dd y\right) \, \dd x \\
    &= \int_{-\infty}^\infty e^{-\alpha(x-\gamma)^2} \left(\frac{\sqrt{\pi}}{2\beta^{3/2}} - \sqrt{\frac{\pi}{\beta}} \, x^2\right) \, \dd x \\
    &= \frac{\sqrt{\pi}}{2\beta^{3/2}} \int_{-\infty}^\infty e^{-\alpha(x-\gamma)^2} \, \dd x - \sqrt{\frac{\pi}{\beta}}  \int_{-\infty}^\infty x^2 e^{-\alpha(x-\gamma)^2} \, \dd x
\end{split}
\end{equation}
Evaluating the integral in $x$ and simplifying the expressions, we get the final expression for $I$,
\begin{equation}
\begin{split}
    I &= \frac{\sqrt{\pi}}{2\beta^{3/2}} \int_{-\infty}^\infty e^{-\alpha(x-\gamma)^2} \, \dd x - \sqrt{\frac{\pi}{\beta}}  \int_{-\infty}^\infty x^2 e^{-\alpha(x-\gamma)^2} \, \dd x \\
    &= \frac{\sqrt{\pi}}{2\beta^{3/2}} \sqrt{\frac{\pi}{\alpha}} - \sqrt{\frac{\pi}{\beta}}  \left(\frac{\sqrt{\pi }}{2 \alpha ^{3/2}} \left(1 + 2 \alpha  \gamma ^2\right)\right) \\
    &= \frac{\pi}{2\sqrt{\alpha\beta}} \left(\frac{1}{\beta} - \frac{1}{\alpha} - 2\gamma^2\right) \\
    &= \frac{\pi}{2} \left(\frac{\alpha - \beta - 2\alpha \beta \gamma^2}{\left(\alpha\beta\right)^{3/2}}\right)
\end{split}
\end{equation}

\end{proof}

\end{document}